\documentclass[10pt]{amsart}

\usepackage[english]{babel}
\usepackage{amssymb}
\usepackage[leqno]{amsmath}
\usepackage{mathrsfs}
\usepackage{stmaryrd}
\usepackage{chemarrow}
\usepackage[norelsize]{algorithm2e}
\usepackage{enumerate}
\usepackage{graphicx}
\usepackage{subcaption}
\usepackage[all]{xy}
\usepackage{tikz}
\usetikzlibrary{arrows}
\usepackage{multirow}
\usepackage[colorlinks=true,allcolors=blue]{hyperref}
\usepackage{booktabs}
\usepackage{float}
\usepackage{placeins}

\newtheorem{theorem}{Theorem}[section]
\newtheorem{lemma}[theorem]{Lemma}
\newtheorem{corollary}[theorem]{Corollary}

\newtheorem{example}[theorem]{Example}

\newtheorem{remark}[theorem]{Remark}

\newcommand{\dd}{\,{\rm d}}

\newcommand{\curl}{\operatorname{curl}}
\renewcommand{\div}{\operatorname{div}}
\newcommand{\grad}{\operatorname{grad}}
\newcommand{\tr}{\operatorname{tr}}

\newcommand{\dev}{\operatorname{dev}}
\newcommand{\sym}{\operatorname{sym}}
\newcommand{\skw}{\operatorname{skw}}

\numberwithin{equation}{section}

\begin{document}

\title[Weakly Symmetric and Traceless Tensor Elements]
{Weakly Symmetric and Traceless Tangential--Normal Tensor Finite Elements:
Application to the Brinkman Equations}

\author{Xuehai Huang}
\address{School of Mathematics, Shanghai University of Finance and Economics,
Shanghai 200433, China}
\email{huang.xuehai@sufe.edu.cn}

\author{Xinyue Zhao}
\address{School of Mathematics, Shanghai University of Finance and Economics,
Shanghai 200433, China}
\email{zhaoxinyue20210921@163.com}

\thanks{The first author was supported by the National Natural Science
Foundation of China Project 12671432.}

\begin{abstract}
We develop a family of weakly symmetric and pointwise traceless tangential--normal tensor finite elements in arbitrary space dimension and for all polynomial orders. Symmetry is imposed through local cell moments, while the only globally coupled stress degrees of freedom are tangential--normal facet moments; no vertex degrees of freedom are required. In dimensions three and higher, a lowest-order linear enrichment restores the rigid-motion facet control required for discrete Korn stability. As a principal application, we construct a distributional mixed method for the incompressible Brinkman equations using the physical viscous stress. Coupled with divergence-conforming BDM velocities and discontinuous pressures, the method is stabilization-free, uniformly stable with respect to the viscosity parameter, exactly divergence-free, and pressure-robust. We establish optimal-order error estimates in the natural norms. Under suitable parameter-explicit regularity assumptions, we also obtain a parameter-uniform boundary-layer estimate with optimal Darcy approximation order. Relaxing tangential--normal continuity yields an algebraically equivalent stress-hybridized formulation and a stabilization-free virtual element realization.
\end{abstract}

\keywords{
weakly symmetric and traceless tensors, tangential--normal finite
elements, Brinkman equations, distributional mixed finite element methods
}

\subjclass[2020]{65N30, 65N12, 65N22, 76D07, 76S05}

\maketitle

\section{Introduction}

We develop weakly symmetric and pointwise traceless tangential--normal
tensor finite elements. The main contribution is the finite element family
itself: it preserves pointwise tracelessness while imposing symmetry weakly
through cell moments and requiring only tangential--normal facet coupling
between neighboring elements. As a principal application, we use these
elements to discretize the incompressible Brinkman equations with the
symmetric-gradient viscous operator.

Available conforming constructions for symmetric and traceless tensors in three dimensions involve high polynomial degrees and continuity constraints on lower-dimensional subsimplices. In particular, the
\(H(\div;\mathbb S\cap\mathbb T)\)-conforming element of
\cite{HuLinShi2023} has lowest polynomial degree seven and uses vertex
derivatives through order three. For
\(H(\div\div;\mathbb S\cap\mathbb T)\)-conformity, the construction of
\cite{Huang2025} admits polynomial degree four with tensor values at
vertices, whereas that of \cite{GuoHuLin2025} starts at polynomial degree
six and uses first-order vertex derivatives. These constructions motivate
relaxing the interelement continuity requirements and imposing symmetry only
weakly. However, they do not yield the weakly symmetric tensor space
developed here with facet-only interelement coupling.

Tangential--normal continuous traceless tensor elements were introduced in
\cite{GopalakrishnanLedererSchoeberl2020}. The same trace structure was
subsequently incorporated into the three-dimensional distributional finite
element \(\curl\div\) complex for traceless tensors in
\cite{ChenHuangZhang2023}. Our earlier Stokes work
\cite{ChenHuangZhangZhao2026} employed the same tangential--normal trace
structure for a generally nonsymmetric traceless pseudostress. Unlike
\cite{ChenHuangZhangZhao2026}, however, 
the present formulation approximates the physical viscous stress \(\nu\boldsymbol{\varepsilon}(\boldsymbol u)\), which is symmetric. This change requires weak symmetry, discrete Korn control, and a new lowest-order treatment. Weak symmetry has also been incorporated into tangential--normal stress
discretizations. The weakly symmetric mass-conserving mixed stress (MCS)
method of
\cite{Gopalakrishnan2020} directly approximates the viscous stress, imposes
symmetry through an independent vorticity variable, and enriches the stress
space with matrix bubbles to obtain discrete stability. At lowest order in
three dimensions, \cite{GopalakrishnanKoglerLedererSchoeberl2023} develops
related mixed and HDG formulations with minimal facet coupling, whereas the
present \({\rm RM}(F)\)-enrichment provides the required rotational trace
control directly through the stress space, without the
vorticity--divergence stabilization used there. These works already provide
important properties such as exact mass conservation and pressure robustness.
Accordingly, the distinguishing feature of the present
construction is that weak symmetry is incorporated directly into a
 pointwise traceless local tensor space through cell-moment constraints,
 without an independent vorticity unknown. The resulting family is valid
 for every \(d\ge2\) and \(k\ge0\).

For positive polynomial degree, the element is determined by
tangential--normal facet moments \eqref{eq:tndofTS1} and symmetric-traceless cell moments \eqref{eq:tndofTS2}. At
lowest order in dimensions three and higher, the constant facet moments fail
to detect the rotational traces of elementwise rigid motions. We therefore
enrich the local stress space by linear modes whose tangential--normal traces
recover these missing rotations. This enrichment restores the facet control
required by the projected discrete Korn inequality without increasing the
approximation order; no enrichment is needed in two dimensions.

We apply the new elements to the incompressible Brinkman problem with the
symmetric-gradient viscous operator. On a bounded polytope
\(\Omega\subset\mathbb R^d\) with \(d\ge2\), we consider
\begin{equation}\label{eq:Brinkman_intro}
	\begin{cases}
		-\div\bigl(\nu\boldsymbol{\varepsilon}(\boldsymbol u)\bigr)
		+\boldsymbol u-\nabla p=\boldsymbol f,
		&\text{in }\Omega,\\
		\div\boldsymbol u=0,
		&\text{in }\Omega,\\
		\boldsymbol u=\boldsymbol0,
		&\text{on }\partial\Omega,
	\end{cases}
\end{equation}
where \(0<\nu\le1\) and
\(\boldsymbol{\varepsilon}(\boldsymbol u)
=(\nabla\boldsymbol u+\nabla\boldsymbol u^{\intercal})/2\).
Introducing the physical viscous stress
\(\boldsymbol\sigma:=\nu\boldsymbol{\varepsilon}(\boldsymbol u)\), we rewrite
\eqref{eq:Brinkman_intro} as
\begin{equation}\label{eq:Brinkman1st_intro}
	\boldsymbol\sigma=\nu\boldsymbol{\varepsilon}(\boldsymbol u),\quad
	-\div\boldsymbol\sigma+\boldsymbol u-\nabla p=\boldsymbol f,\quad
	\div\boldsymbol u=0\quad\text{in }\Omega;\quad
	\boldsymbol u=\boldsymbol0\quad\text{on }\partial\Omega.
\end{equation}
Thus \(\boldsymbol\sigma\) has precisely the symmetric--traceless structure
targeted by the elements constructed above.

As \(\nu\to0\), \eqref{eq:Brinkman_intro} formally approaches the Darcy limit,
whose velocity satisfies only a normal boundary condition; the no-slip
condition may therefore generate boundary layers
\cite{MardalTaiWinther2002,HuangWang2023}. An \(H(\div)\)-conforming
velocity space is compatible with the limiting boundary condition and,
when paired with a compatible pressure space, exact mass conservation.

Uniformly stable finite element methods for Brinkman and Darcy--Stokes models were studied, for example, in \cite{XieXuXue2008,JuntunenStenberg2010,HongKraus2016}. Representative nonconforming
and \(H(\div)\)-conforming methods include
\cite{GuzmanNeilanBrinkman2012,KonnoStenberg2011}, while
\(H(\div)\)-conforming HDG and weak Galerkin discretizations were developed in
\cite{FuJinQiu2019,Mu2020Brinkman}. Divergence-free conforming virtual
element methods, including parameter-robust analysis in the Darcy limit,
were studied in \cite{HuangWang2023}.

Tensor formulations are distinguished by the tensor variable being
approximated. Nonsymmetric pseudostress formulations for the Brinkman
problem include mixed, virtual element, and weak Galerkin methods
\cite{GaticaGaticaMarquez2014,CaceresGaticaSequeira2017,Gharibi2024}.
The three-field method of \cite{HowellNeilanWalkington2016} introduces
the deviatoric part of the velocity gradient together with a total stress,
whereas strongly symmetric formulations approximate a pseudostress
containing the pressure or the Cauchy stress
\cite{QianWuWang2020,MeddahiRuizBaier2022}. In contrast, the tensor
variable considered here is the physical viscous stress
\(\nu\boldsymbol{\varepsilon}(\boldsymbol u)\), which contains no pressure term and is
pointwise traceless. In the discrete construction, tracelessness is imposed
pointwise, whereas symmetry is enforced weakly through cell moments.

Building on the distributional \(\curl\div\) framework for traceless
tensors, we establish the stable decomposition
\begin{equation}\label{eq:introStressDecomposition}
	(H_0(\div,\Omega))'
	=
	\div H^{-1}(\curl\div,\Omega;\mathbb S\cap\mathbb T)
	+\nabla L_0^2(\Omega).
\end{equation}
This decomposition provides the key continuous inf--sup ingredient for the
distributional mixed formulation \eqref{eq:BrinkmanMixed} and leads to
parameter-uniform well-posedness. We prove that this formulation is
equivalent to the primal Brinkman problem. Its discretization couples the
new stress elements with
\(\mathrm{BDM}_{k+1}\) velocities
\cite{BrezziDouglasMarini1985,Nedelec1986,
	BrezziDouglasDuranFortin1987}
and discontinuous \(\mathbb P_k\) pressures. Uniform discrete stability
relies on the projected discrete Korn estimate on the divergence-free
velocity kernel together with the BDM--pressure inf--sup condition. The
resulting distributional method is stabilization-free, uniformly well posed in
\(\nu\)-fitted norms, exactly divergence-free, and pressure-robust.
We establish optimal-order error estimates in the natural norms for smooth
solutions and a parameter-uniform boundary-layer estimate by comparison
with the Darcy limit.


Finally, relaxing tangential--normal continuity yields an algebraically equivalent
stress-hybridized formulation and a stabilization-free virtual element
realization on simplices, while omitting the drag term gives the
corresponding Stokes method.

The remainder of the paper is organized as follows. Section~\ref{sec:weaklysymmtrictracelesstnelements} constructs
the tensor elements. Sections~\ref{sec:Brinkman} and~\ref{sec:BrinkmanDiscrete} present the continuous distributional
formulation and the mixed finite element method, respectively. Section~\ref{sec:error}
establishes the error estimates. Section~\ref{sub:equivformulations} derives the equivalent
stress-hybridized and virtual element formulations, and Section~\ref{sec:BrinkmanNumericalResults} presents
the numerical experiments.
\section{Weakly Symmetric and Traceless Tangential-Normal Tensor Elements}\label{sec:weaklysymmtrictracelesstnelements}

This section constructs tangential--normal finite elements for pointwise traceless tensors, with symmetry imposed weakly through cell moments against skew-symmetric tensors. The resulting elements are valid in all space dimensions \(d\ge2\) and involve only facet and cell degrees of freedom.

\subsection{Notation}
Let \(\mathbb M:=\mathbb R^{d\times d}\), and denote by \(\mathbb S\),
\(\mathbb K\), and \(\mathbb T\) its subspaces of symmetric,
skew-symmetric, and traceless matrices, respectively. Thus
\(\mathbb S\cap\mathbb T\) is the space of symmetric and traceless matrices.
For \(\boldsymbol\tau\in\mathbb M\), set
\[
\sym\boldsymbol\tau:=\frac{1}{2}(\boldsymbol\tau+\boldsymbol\tau^{\intercal}),
\quad
\skw\boldsymbol\tau:=\frac{1}{2}(\boldsymbol\tau-\boldsymbol\tau^{\intercal}),
\quad
\dev\boldsymbol\tau:=\boldsymbol\tau-\frac{1}{d}(\tr\boldsymbol\tau)\boldsymbol I.
\]

Let \(\Omega\subset\mathbb R^d\), \(d\ge2\), be a bounded connected polytope. For a
bounded Lipschitz domain \(D\subset\mathbb R^d\) and an integer \(m\ge0\),
\(H^m(D)\) denotes the standard Sobolev space with norm
\(\|\cdot\|_{m,D}\) and seminorm \(|\cdot|_{m,D}\). Let \(H_0^m(D)\) be the
closure of \(C_0^\infty(D)\) in \(H^m(D)\). We set \(L^2(D)=H^0(D)\), with
inner product \((\cdot,\cdot)_D\) and norm \(\|\cdot\|_{0,D}\). When
\(D=\Omega\), the subscript \(D\) is omitted. For any \(D\), \(h_D\) denotes
its diameter and \(\boldsymbol n_{\partial D}\) its unit outward normal; we
write simply \(\boldsymbol n\) when the domain is clear.

We use
\[
\begin{aligned}
	H(\div,D)
	&:=\{\boldsymbol v\in L^2(D;\mathbb R^d):
	\div\boldsymbol v\in L^2(D)\},\\
	H_0(\div,D)
	&:=\{\boldsymbol v\in H(\div,D):
	\boldsymbol v\cdot\boldsymbol n=0\text{ on }\partial D\},
\end{aligned}
\]
with norm
\(
\|\boldsymbol v\|_{H(\div,D)}
:=(\|\boldsymbol v\|_{0,D}^2+\|\div\boldsymbol v\|_{0,D}^2)^{1/2}.
\)
The boundary condition in \(H_0(\div,D)\) is understood in the normal-trace
sense. Let \(L_0^2(D)\) denote the space of square-integrable functions with vanishing mean value.
The duality pairing
between a space and its dual is denoted by
\(\langle\cdot,\cdot\rangle\).
For an integer \(m\ge0\), let
\(\mathbb P_m(D)\) be the space of polynomials on \(D\) of total degree at
most \(m\), with the convention \(\mathbb P_{-1}(D):=\{0\}\).

Let \(\{\mathcal T_h\}_{h>0}\) be a shape-regular family of simplicial meshes
of \(\Omega\), with \(h=\max_{T\in\mathcal T_h}h_T\) and
\(h_T=\operatorname{diam}(T)\). We denote by \(\mathcal F_h\),
\(\mathring{\mathcal F}_h\), and \(\mathcal F_h^\partial\) the sets of all
facets, interior facets, and boundary facets, respectively, and set
\(h_F:=\operatorname{diam}(F)\). For \(T\in\mathcal T_h\), let
\(
\mathcal F(T):=\{F\in\mathcal F_h:F\subset\partial T\}
\)
be its set of facets, while \(\partial T\) denotes its geometric boundary.

For later use in the local finite element construction, let
\(\lambda_0,\ldots,\lambda_d\) be the barycentric coordinates of a simplex
\(T\in\mathcal T_h\) with vertices
\(\texttt{v}_0,\ldots,\texttt{v}_d\). For \(j\ne \ell\), set
\(\boldsymbol t_{j,\ell}:=\texttt{v}_\ell-\texttt{v}_j\). We denote by
\(F_i\) the facet opposite to \(\texttt{v}_i\), and by \(\boldsymbol n_i\)
the unit outward normal to \(F_i\). On each facet \(F\), fix an orthonormal
basis \(\{\boldsymbol t_{F,i}\}_{i=1}^{d-1}\) of its tangent space.

For each interior facet \(F=T^+\cap T^-\), we fix a unit normal
\(\boldsymbol n_F\) to orient jumps, with \(\boldsymbol n_F\) taken outward on
\(T^+\). For boundary facets \(F\subset\partial\Omega\), we set
\(\boldsymbol n_F=\boldsymbol n_{\partial\Omega}|_F\). For a piecewise field
\(w\), its jump across \(F\) is defined by
\[
[\![w]\!]:=
\begin{cases}
	w^+-w^- , & F=T^+\cap T^-\in\mathring{\mathcal F}_h,\\
	w, & F\in\mathcal F_h^\partial .
\end{cases}
\]
When an element \(T\) is fixed, \(\boldsymbol n\) denotes its outward unit
normal on \(\partial T\).

For a vector field \(\boldsymbol v\), we write
\[
\boldsymbol{\varepsilon}(\boldsymbol v):=\sym\grad\boldsymbol v,
\qquad
\dev\boldsymbol{\varepsilon}(\boldsymbol v):=\dev\bigl(\boldsymbol{\varepsilon}(\boldsymbol v)\bigr).
\]
The divergence of a matrix field is taken row-wise. We denote the broken
gradient and divergence by \(\grad_h\) and \(\div_h\), respectively, and set
\(\boldsymbol{\varepsilon}_h(\boldsymbol v):=\sym\grad_h\boldsymbol v\).

For a vector field \(\boldsymbol w\), the tangential projection on a facet \(F\)
is
\[
\Pi_F\boldsymbol w
:=
\bigl.(\boldsymbol I-\boldsymbol n_F\boldsymbol n_F^{\intercal})
\boldsymbol w\bigr|_F .
\]
For a matrix field \(\boldsymbol\tau\), we write
\(\Pi_F\boldsymbol\tau\boldsymbol n_F\) for
\(\Pi_F(\boldsymbol\tau\boldsymbol n_F)\), the tangential component of the
normal trace. Tangential boundary constraints are understood facetwise on all
\(F\in\mathcal F_h^\partial\). Whenever \(\mathbb R^{d-1}\)-valued functions
are defined on a facet, we identify \(\mathbb R^{d-1}\) with the tangent space
of that facet through the fixed tangential basis.

For a linear space \(V(D)\), define the broken space
\(
V(\mathcal T_h)
:=\prod_{T\in\mathcal T_h}V(T).
\)
For a finite-dimensional vector or matrix space \(\mathbb X\), set
\[
V(D;\mathbb X):=V(D)\otimes\mathbb X,
\qquad
V(\mathcal T_h;\mathbb X):=\prod_{T\in\mathcal T_h}V(T;\mathbb X).
\]
The \(L^2\)-orthogonal projections onto \(\mathbb P_m(D;\mathbb X)\) and
\(\mathbb P_m(\mathcal T_h;\mathbb X)\) are denoted by \(Q_{m,D}\) and
\(Q_{m,h}\), respectively, with the target space understood from context.
In particular, when applied to tensor fields,
\(
Q_{k,h}:L^2(\Omega;\mathbb M)\longrightarrow
\mathbb P_k(\mathcal T_h;\mathbb M)
\)
preserves the subspaces \(\mathbb T\) and \(\mathbb K\). The same notation
will be used for the scalar projector onto the discrete pressure space.

The notation \(a\lesssim b\) means \(a\le Cb\), with a generic constant
\(C\) independent of parameter \(\nu\) and mesh size \(h\). We write \(a\eqsim b\) if both \(a\lesssim b\) and
\(b\lesssim a\) hold.

\subsection{An auxiliary traceless tangential-normal element}

For each \(\ell=0,\ldots,d\), choose
\(j_\ell\in\{0,\ldots,d\}\setminus\{\ell\}\) and set
\(
I_\ell:=\{0,\ldots,d\}\setminus\{\ell,j_\ell\}.
\)

We first recall the local tangential-normal bubble space for traceless tensors. For
\(\boldsymbol\tau\in\mathbb P_k(T;\mathbb T)\), define its
tangential-normal trace by
\(
\tr_{T,F}^{tn}\boldsymbol\tau
:=
\Pi_F(\boldsymbol\tau\boldsymbol n_{\partial T}).
\)
When the element \(T\) is fixed, we simply write \(\tr_F^{tn}\).
For \(k\ge1\), let the bubble space
\[
\mathbb B_k^{tn}(T;\mathbb T)
:=
\{\boldsymbol\tau\in\mathbb P_k(T;\mathbb T):
\tr_F^{tn}\boldsymbol\tau=0
\quad\forall\,F\in\mathcal F(T)\},
\]
and set \(\mathbb B_0^{tn}(T;\mathbb T):=\{0\}\). By
\cite[Lemma~3.3 and Remark~3.4]{ChenHuangZhang2023}, for \(k\ge1\),
\begin{equation}\label{eq:bubble}
	\mathbb B_k^{tn}(T;\mathbb T)
	=
	\mathbb P_{k-1}(T)\otimes
	\operatorname{span}
	\left\{
\lambda_\ell\,\dev(\boldsymbol n_i\otimes\boldsymbol t_{i,\ell})
	:\ \ell=0,\ldots,d,\ i\in I_\ell
	\right\}.
\end{equation}

For \(k\ge0\), define
\[
\Sigma_k^+(T;\mathbb T)
:=
\mathbb P_k(T;\mathbb T)
+\mathbb B_{k+1}^{tn}(T;\mathbb T).
\]
Here and below, \(\mathbb P_k(F_\ell)\) is identified with its canonical
degree-\(k\) homogeneous extension in the barycentric coordinates
\(\{\lambda_i\}_{i\ne\ell}\) to \(T\).
By the geometric decomposition in \cite[Lemma~3.3 and Remark~3.4]{ChenHuangZhang2023},
	\begin{equation}\label{eq:geometric}
		\mathbb P_k(T;\mathbb T)
		=
		\bigoplus_{\ell=0}^{d}
		\left(
		\mathbb P_k(F_\ell)\otimes
		\operatorname{span}
		\left\{
\dev(\boldsymbol n_i\otimes\boldsymbol t_{i,\ell})
		\right\}_{i\in I_\ell}
		\right)
		\oplus
		\mathbb B_k^{tn}(T;\mathbb T).
	\end{equation}
	Since
	\(\mathbb P_k(T;\mathbb T)\cap\mathbb B_{k+1}^{tn}(T;\mathbb T)
	=\mathbb B_k^{tn}(T;\mathbb T)\), combining this decomposition with the
	definition of \(\Sigma_k^+(T;\mathbb T)\), we obtain
	\begin{equation}\label{eq:geometricSigma}
		\Sigma_k^+(T;\mathbb T)
		=
		\bigoplus_{\ell=0}^{d}
		\left(
		\mathbb P_k(F_\ell)\otimes
		\operatorname{span}
		\left\{
\dev(\boldsymbol n_i\otimes\boldsymbol t_{i,\ell})
		\right\}_{i\in I_\ell}
		\right)
		\oplus
		\mathbb B_{k+1}^{tn}(T;\mathbb T).
	\end{equation}
The degrees of freedom for
\(\Sigma_k^+(T;\mathbb T)\) are
\begin{subequations}\label{eq:tndof}
	\begin{align}
		(\boldsymbol t_{F,i}^{\intercal}\boldsymbol\tau\boldsymbol n, q)_F,
		&\qquad
		q\in\mathbb P_k(F),\;
		i=1,\ldots,d-1,\; F\in\mathcal F(T),
		\label{eq:tndof1}\\
		(\boldsymbol\tau,\boldsymbol q)_T,
		&\qquad
		\boldsymbol q\in\mathbb P_k(T;\mathbb T).
		\label{eq:tndof2}
	\end{align}
\end{subequations}

\begin{lemma}\label{le:unisolventT}
	The DoFs \eqref{eq:tndof} are unisolvent for
	\(\Sigma_k^+(T;\mathbb T)\).
\end{lemma}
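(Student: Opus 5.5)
The argument is the standard one: count dimensions, then show that a function with all degrees of freedom zero must vanish.

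\textbf{Dimension count.} By \eqref{eq:geometricSigma},
\[
\dim\Sigma_k^+(T;\mathbb T)=\sum_{\ell=0}^d (d-1)\dim\mathbb P_k(F_\ell)+\dim\mathbb B_{k+1}^{tn}(T;\mathbb T),
\]
since $|I_\ell|=d-1$. Applying \eqref{eq:geometric} with $k+1$ in place of $k$ and subtracting the facet parts gives
\[
\dim\mathbb B_{k+1}^{tn}(T;\mathbb T)=\dim\mathbb P_{k+1}(T;\mathbb T)-(d+1)(d-1)\dim\mathbb P_{k+1}(F).
\]
The number of DoFs in \eqref{eq:tndof} is $(d+1)(d-1)\dim\mathbb P_k(F)+\dim\mathbb P_k(T;\mathbb T)$. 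To show the two counts agree, it is enough to verify
\[
\dim\mathbb B_{k+1}^{tn}(T;\mathbb T)=\dim\mathbb P_k(T;\mathbb T),
\]
that is,
\[
(d^2-1)\bigl[\dim\mathbb P_{k+1}(T)-\dim\mathbb P_k(T)\bigr]=(d^2-1)(d+1)\bigl[\dim\mathbb P_{k+1}(F)-\dim\mathbb P_k(F)\bigr].
\]
Using $\dim\mathbb P_{k+1}(T)-\dim\mathbb P_k(T)=\binom{k+d}{d-1}$ and $\dim\mathbb P_{k+1}(F)-\dim\mathbb P_k(F)=\binom{k+d-1}{d-2}$, this identity does \emph{not} hold in general. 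So I would not rely on this shortcut. Instead I would read $\dim\mathbb B_{k+1}^{tn}$ directly from \eqref{eq:bubble}, giving $\dim\mathbb P_k(T)\cdot(d+1)(d-1)$, and then check that
\[
(d+1)(d-1)\dim\mathbb P_k(T)\ \ge\ \dim\mathbb P_k(T;\mathbb T)=(d^2-1)\dim\mathbb P_k(T).
\]
The two sides are equal, provided the generators in \eqref{eq:bubble} are independent, as the tensor-product notation asserts. The counts therefore match. The main care point is getting this bookkeeping right by citing \eqref{eq:bubble} and \eqref{eq:geometric}.

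\textbf{Vanishing.} Let $\boldsymbol\tau\in\Sigma_k^+(T;\mathbb T)$ have all DoFs zero. Since $\boldsymbol\tau\in\mathbb P_{k+1}$, the trace $\tr_F^{tn}\boldsymbol\tau$ has degree $k+1$ on $F$, so \eqref{eq:tndof1} alone does not force it to vanish. I would use the decomposition \eqref{eq:geometricSigma} and write
\[
\boldsymbol\tau=\sum_\ell\boldsymbol\tau_\ell+\boldsymbol b,
\]
with facet components $\boldsymbol\tau_\ell\in\mathbb P_k(F_\ell)\otimes\operatorname{span}\{\dev(\boldsymbol n_i\otimes\boldsymbol t_{i,\ell})\}_{i\in I_\ell}$ and $\boldsymbol b\in\mathbb B_{k+1}^{tn}(T;\mathbb T)$. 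The bubble has zero tangential--normal trace, so on each $F$ the trace $\tr_F^{tn}\boldsymbol\tau=\tr_F^{tn}\bigl(\sum_\ell\boldsymbol\tau_\ell\bigr)$ has degree $k$. Hence \eqref{eq:tndof1} gives $\tr_F^{tn}(\boldsymbol\tau-\boldsymbol b)=0$ on every facet. Then $\boldsymbol\tau-\boldsymbol b\in\mathbb P_k(T;\mathbb T)$ has vanishing tangential--normal traces, so it lies in $\mathbb B_k^{tn}(T;\mathbb T)$. By directness of \eqref{eq:geometric} this forces $\sum_\ell\boldsymbol\tau_\ell=0$, and therefore $\boldsymbol\tau=\boldsymbol b\in\mathbb B_{k+1}^{tn}(T;\mathbb T)$.

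\textbf{Killing the bubble.} Write $\boldsymbol b=\sum q_{\ell,i}\lambda_\ell\dev(\boldsymbol n_i\otimes\boldsymbol t_{i,\ell})$ with $q_{\ell,i}\in\mathbb P_k(T)$. The cell DoFs \eqref{eq:tndof2} say $\boldsymbol b\perp\mathbb P_k(T;\mathbb T)$. This is the key step. Since the bubble space and $\mathbb P_k(T;\mathbb T)$ have equal dimension, it suffices to show that the bilinear pairing $(\boldsymbol b,\boldsymbol q)_T$ between them is nondegenerate. I would choose $\boldsymbol q$ using dual tensors: pick $\{\boldsymbol E_{\ell,i}\}\subset\mathbb T$ with
\[
\boldsymbol E_{\ell,i}:\dev(\boldsymbol n_{i'}\otimes\boldsymbol t_{i',\ell'})=\delta_{\ell\ell'}\delta_{ii'}.
\]
These exist because the $(d+1)(d-1)$ constant tensors $\dev(\boldsymbol n_i\otimes\boldsymbol t_{i,\ell})$ form a basis of $\mathbb T$, which follows from independence in \eqref{eq:bubble}. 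Taking $\boldsymbol q=\sum_{\ell,i}q_{\ell,i}\boldsymbol E_{\ell,i}$ gives
\[
0=(\boldsymbol b,\boldsymbol q)_T=\sum_{\ell,i}\int_T\lambda_\ell\,q_{\ell,i}^2.
\]
Each term is nonnegative because $\lambda_\ell>0$ in the interior of $T$, so every $q_{\ell,i}=0$ and $\boldsymbol b=0$. I expect this duality argument, in particular confirming that the constant generators span $\mathbb T$, to be the main point needing justification. If needed I would cite the basis property from \cite{ChenHuangZhang2023}.
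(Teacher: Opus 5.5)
Your argument is correct and is essentially the paper's proof. The facet moments force $\tr_F^{tn}\boldsymbol\tau=0$ because the bubble part carries no tangential--normal trace, so $\boldsymbol\tau\in\mathbb B_{k+1}^{tn}(T;\mathbb T)$. Your detour through the directness of \eqref{eq:geometric} is unnecessary here, since vanishing traces put $\boldsymbol\tau$ in the bubble space by definition. The bubble is then killed by the Frobenius-dual basis and the positivity of $\lambda_\ell$, exactly as in the paper.

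One correction to your dimension count: the shortcut you discarded actually works, and your claim that it fails comes from an algebra slip. After cancelling the common facet terms, the required identity is
\[
(d^2-1)\bigl[\dim\mathbb P_{k+1}(T)-\dim\mathbb P_k(T)\bigr]=(d^2-1)\dim\mathbb P_{k+1}(F).
\]
It has no extra factor $d+1$ and no difference of facet dimensions, and both brackets equal $\binom{k+d}{d-1}$. Had it failed, \eqref{eq:geometric} at degree $k+1$ and \eqref{eq:bubble} would assign different dimensions to $\mathbb B_{k+1}^{tn}(T;\mathbb T)$.

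Also, the basis property of $\{\dev(\boldsymbol n_i\otimes\boldsymbol t_{i,\ell})\}$ is most directly read off from the $k=0$ case of \eqref{eq:geometric}. There $\mathbb B_0^{tn}(T;\mathbb T)=\{0\}$, so these $d^2-1$ tensors span $\mathbb T$. This is how the paper justifies it, and it is cleaner than deducing it from independence in \eqref{eq:bubble}.
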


\begin{proof}
By \eqref{eq:geometricSigma} and \(\dim\mathbb T=d^2-1\),
the number of DoFs in \eqref{eq:tndof} agrees with
\(\dim\Sigma_k^+(T;\mathbb T)\).

	It remains to prove uniqueness. Let
	\(\boldsymbol\tau\in\Sigma_k^+(T;\mathbb T)\) have all the DoFs in
	\eqref{eq:tndof} equal to zero.
	Since the components of \(\tr_F^{tn}\boldsymbol\tau\) in the chosen
	tangent basis belong to \(\mathbb P_k(F)\), the vanishing facet moments
	imply \(\tr_F^{tn}\boldsymbol\tau=0\) for all \(F\in\mathcal F(T)\). Thus
	\(\boldsymbol\tau\in\mathbb B_{k+1}^{tn}(T;\mathbb T)\). Set
\(A_{\ell i}:=\dev(\boldsymbol n_i\otimes\boldsymbol t_{i,\ell})\).
	By the \(k=0\) case of the geometric decomposition \eqref{eq:geometric},
	\(\{A_{\ell i}:\ell=0,\ldots,d,\ i\in I_\ell\}\) forms a basis of
	\(\mathbb T\). Let \(\{A_{\ell i}^*\}\) be its Frobenius-dual basis. By
	\eqref{eq:bubble},
	\[
	\boldsymbol\tau
	=
	\sum_{\ell=0}^{d}\sum_{i\in I_\ell}
	\lambda_\ell p_{\ell i}A_{\ell i},
	\qquad p_{\ell i}\in\mathbb P_k(T).
	\]
	Taking \(\boldsymbol q=\sum_{\ell=0}^{d}\sum_{i\in I_\ell}
	p_{\ell i}A_{\ell i}^*\) in \eqref{eq:tndof2} gives
	\[
	0=(\boldsymbol\tau,\boldsymbol q)_T
	=
	\sum_{\ell=0}^{d}\sum_{i\in I_\ell}
	\int_T\lambda_\ell\lvert p_{\ell i}\rvert^2\,\mathrm dx.
	\]
	Since \(\lambda_\ell>0\) in the interior of \(T\), we have
	\(p_{\ell i}=0\) for all \(\ell\) and \(i\), and thus
	\(\boldsymbol\tau=0\).
\end{proof}

\subsection{Weak symmetry through cell moments for \texorpdfstring{\(k\ge1\)}{k >= 1}}

For \(k\ge1\), define
\begin{equation}\label{eq:Sigma+TS}
\Sigma_k^+(T;\mathbb S\cap\mathbb T)
:=
\left\{
\boldsymbol\tau\in\Sigma_k^+(T;\mathbb T):
(\boldsymbol\tau,\boldsymbol q)_T=0
\quad
\forall\,\boldsymbol q\in\mathbb P_k(T;\mathbb K)
\right\}.
\end{equation}
Although the notation includes \(\mathbb S\), functions in
\(\Sigma_k^+(T;\mathbb S\cap\mathbb T)\) are generally not pointwise
symmetric; symmetry is imposed only through the above cell-moment conditions.
Since
\(\Sigma_k^+(T;\mathbb T)\subset\mathbb P_{k+1}(T;\mathbb T)\), set
\[
\mathbb P_{k+1}^{\perp k}(T;\mathbb K)
:=
\left\{
\boldsymbol r\in\mathbb P_{k+1}(T;\mathbb K):
(\boldsymbol r,\boldsymbol q)_T=0
\quad
\forall\,\boldsymbol q\in\mathbb P_k(T;\mathbb K)
\right\}.
\]
The \(L^2(T)\)-orthogonality of the symmetric and skew-symmetric parts
shows that \eqref{eq:Sigma+TS} is equivalently characterized by
\[
\Sigma_k^+(T;\mathbb S\cap\mathbb T)
=
\left\{
\boldsymbol\tau\in\Sigma_k^+(T;\mathbb T):
\skw\boldsymbol\tau\in
\mathbb P_{k+1}^{\perp k}(T;\mathbb K)
\right\}.
\]
Thus weak symmetry annihilates all skew-symmetric moments of degree at most
\(k\). Unlike the trace-based decomposition \eqref{eq:geometricSigma}, this
is an intrinsic characterization: the cell-moment constraint couples the
facet and bubble modes.
The DoFs are
\begin{subequations}\label{eq:tndofTS}
	\begin{align}
		(\boldsymbol t_{F,i}^{\intercal}\boldsymbol\tau\boldsymbol n, q)_F,
		&\qquad
		q\in\mathbb P_k(F),\;
		i=1,\ldots,d-1,\; F\in\mathcal F(T),
		\label{eq:tndofTS1}\\
		(\boldsymbol\tau,\boldsymbol q)_T,
		&\qquad
		\boldsymbol q\in
		\mathbb P_k(T;\mathbb S\cap\mathbb T).
		\label{eq:tndofTS2}
	\end{align}
\end{subequations}

\begin{lemma}\label{le:unisolventTS}
	For \(k\ge1\), the DoFs \eqref{eq:tndofTS} are unisolvent for
	\(\Sigma_k^+(T;\mathbb S\cap\mathbb T)\).
\end{lemma}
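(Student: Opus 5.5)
We first show that the dimensions match, and then that the DoFs can only vanish on the zero element.

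\emph{Dimension count.} Since the DoFs \eqref{eq:tndof} are unisolvent for \(\Sigma_k^+(T;\mathbb T)\) by Lemma~\ref{le:unisolventT}, \(\dim\Sigma_k^+(T;\mathbb T)\) equals the number of facet moments plus \(\dim\mathbb P_k(T;\mathbb T)\). Because \(\mathbb T=(\mathbb S\cap\mathbb T)\oplus\mathbb K\), we have \(\dim\mathbb P_k(T;\mathbb T)=\dim\mathbb P_k(T;\mathbb S\cap\mathbb T)+\dim\mathbb P_k(T;\mathbb K)\). The space \(\Sigma_k^+(T;\mathbb S\cap\mathbb T)\) is the kernel of the linear map
\[
\Sigma_k^+(T;\mathbb T)\to\mathbb P_k(T;\mathbb K),\qquad \boldsymbol\tau\mapsto Q_{k,T}\skw\boldsymbol\tau .
\]
This map is surjective. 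Given \(\boldsymbol r\in\mathbb P_k(T;\mathbb K)\subset\mathbb P_k(T;\mathbb T)\), Lemma~\ref{le:unisolventT} yields \(\boldsymbol\tau\in\Sigma_k^+(T;\mathbb T)\) with zero facet moments and \((\boldsymbol\tau,\boldsymbol q)_T=(\boldsymbol r,\boldsymbol q)_T\) for all \(\boldsymbol q\in\mathbb P_k(T;\mathbb T)\). Then \(Q_{k,T}\skw\boldsymbol\tau=\boldsymbol r\). Hence
\[
\dim\Sigma_k^+(T;\mathbb S\cap\mathbb T)=\dim\Sigma_k^+(T;\mathbb T)-\dim\mathbb P_k(T;\mathbb K),
\]
which is exactly the number of DoFs in \eqref{eq:tndofTS}.

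\emph{Uniqueness.} Let \(\boldsymbol\tau\in\Sigma_k^+(T;\mathbb S\cap\mathbb T)\) have all DoFs \eqref{eq:tndofTS} equal to zero. By the defining condition \eqref{eq:Sigma+TS}, we already have \((\boldsymbol\tau,\boldsymbol q)_T=0\) for all \(\boldsymbol q\in\mathbb P_k(T;\mathbb K)\). Combining this with \eqref{eq:tndofTS2} and the splitting \(\mathbb P_k(T;\mathbb T)=\mathbb P_k(T;\mathbb S\cap\mathbb T)\oplus\mathbb P_k(T;\mathbb K)\), we get \((\boldsymbol\tau,\boldsymbol q)_T=0\) for all \(\boldsymbol q\in\mathbb P_k(T;\mathbb T)\). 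Together with the vanishing facet moments \eqref{eq:tndofTS1}, all DoFs \eqref{eq:tndof} of \(\boldsymbol\tau\) vanish, so \(\boldsymbol\tau=0\) by Lemma~\ref{le:unisolventT}.

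No step is a real obstacle. The only point needing care is the surjectivity used in the dimension count, and Lemma~\ref{le:unisolventT} gives it directly because \(\mathbb P_k(T;\mathbb K)\subset\mathbb P_k(T;\mathbb T)\). Tracelessness of skew matrices is what guarantees this inclusion.
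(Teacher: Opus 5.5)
Your proof is correct and follows essentially the same route as the paper: you realize $\Sigma_k^+(T;\mathbb S\cap\mathbb T)$ as the kernel of $k$-th order skew moments and count dimensions, and your uniqueness argument is identical. The only differences are minor: you get surjectivity from Lemma~\ref{le:unisolventT}, whereas the paper uses $\mathbb P_k(T;\mathbb K)\subset\Sigma_k^+(T;\mathbb T)$ directly (equivalently, you could take $\boldsymbol\tau=\boldsymbol r$), and you match the DoF count abstractly instead of writing out the explicit dimension formula.
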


\begin{proof}
	Since \(\mathbb P_k(T;\mathbb K)\subset\Sigma_k^+(T;\mathbb T)\)
	and the \(L^2(T)\)-pairing is nondegenerate, the weak-symmetry conditions
	in \eqref{eq:Sigma+TS} are independent. Consequently,
	\[
	\begin{aligned}
	\dim\Sigma_k^+(T;\mathbb S\cap\mathbb T)
	&=\dim\Sigma_k^+(T;\mathbb T)-\dim\mathbb P_k(T;\mathbb K)\\
	&=(d+1)(d-1)\binom{k+d-1}{d-1}+\frac{1}{2}(d-1)(d+2)\binom{k+d}{d}.
	\end{aligned}
	\]
	This agrees with the number of DoFs in \eqref{eq:tndofTS}. It remains to prove uniqueness. Let
	\(\boldsymbol\tau\in\Sigma_k^+(T;\mathbb S\cap\mathbb T)\) have all DoFs in \eqref{eq:tndofTS}
	equal to zero. By the definition of
	\(\Sigma_k^+(T;\mathbb S\cap\mathbb T)\),
	\[
	(\boldsymbol\tau,\boldsymbol q)_T=0
	\qquad
	\forall\,\boldsymbol q\in\mathbb P_k(T;\mathbb K).
	\]
	Since
	\(
	\mathbb P_k(T;\mathbb T)
	=
	\mathbb P_k(T;\mathbb S\cap\mathbb T)
	\oplus
	\mathbb P_k(T;\mathbb K),
	\)
	all cell moments against \(\mathbb P_k(T;\mathbb T)\) vanish. Together with
	the vanishing facet moments, Lemma~\ref{le:unisolventT} gives
	\(\boldsymbol\tau=0\).
\end{proof}

\subsection{Lowest-order element and \texorpdfstring{\({\rm RM}(F)\)}{RM(F)} facet moments}

We now treat the lowest-order case \(k=0\) uniformly for all dimensions
\(d\ge2\). The facet coupling must detect the tangential traces of
elementwise rigid motions.
Let
\[
{\rm RM}(T):=\mathbb P_0(T;\mathbb R^d)\oplus \mathbb K\boldsymbol x
\]
be the space of elementwise rigid motions. On a facet \(F\), define
\[
{\rm RM}(F):=\mathbb P_0(F;\mathbb R^{d-1})
\oplus \mathbb K_F\boldsymbol x_F ,
\]
where \(\boldsymbol x_F:=\Pi_F(\boldsymbol x-\boldsymbol x_F^c)\),
\(\boldsymbol x_F^c:=|F|^{-1}\int_F\boldsymbol x\,ds\), and
\(
\mathbb K_F:=\{\boldsymbol A\in\mathbb K:\boldsymbol A\boldsymbol n_F=0\}
\)
is the space of skew-symmetric matrices acting on the tangent space of \(F\). Here
\(\mathbb K_F\boldsymbol x_F
:=\{\boldsymbol A\boldsymbol x_F:\boldsymbol A\in\mathbb K_F\}\).
Since \(\dim\mathbb K_F=(d-1)(d-2)/2\), the rotational component has one
mode per facet when \(d=3\). When \(d=2\), \(\mathbb K_F=\{\boldsymbol 0\}\),
and hence
\[
{\rm RM}(F)=\mathbb P_0(F;\mathbb R).
\]
Here and below, the same expression for \(\boldsymbol x_F\) denotes its affine
extension to \(T\). The two components of \({\rm RM}(F)\) are
\(L^2(F)\)-orthogonal since \(\int_F\boldsymbol x_F\dd s=0\).
Moreover, for every \(\boldsymbol v\in {\rm RM}(T)\), its tangential
trace satisfies \(\Pi_F\boldsymbol v\in {\rm RM}(F)\).
Let \(Q_{{\rm RM},F}:L^2(F;\mathbb R^{d-1})\to {\rm RM}(F)\) denote the
\(L^2\)-orthogonal projector. When \(d=2\), \(Q_{{\rm RM},F}=Q_{0,F}\).

The space \(\mathbb P_0(T;\mathbb T)\) gives only constant tangential-normal
traces on each facet. For \(d\ge3\), the rotational component
\(\mathbb K_F\boldsymbol x_F\) of \({\rm RM}(F)\) is recovered by the
following linear enrichment modes from the geometric decomposition. Using the
indices \(j_\ell\) and
\(I_\ell\) fixed above, define
\[
\boldsymbol E_{\ell,\boldsymbol A}
:=
\sum_{i\in I_\ell}
\left(
\boldsymbol t_{i,j_\ell}^{\intercal}
\boldsymbol A\boldsymbol x_{F_\ell}
\right)
\dev(\nabla\lambda_i\otimes\boldsymbol t_{i,\ell}),
\qquad
\boldsymbol A\in\mathbb K_{F_\ell}.
\]
Define
\[
\mathbb P_0^+(T;\mathbb T)
:=
\mathbb P_0(T;\mathbb T)
+
\operatorname{span}
\left\{
\boldsymbol E_{\ell,\boldsymbol A}
:\ \boldsymbol A\in\mathbb K_{F_\ell},\quad \ell=0,\ldots,d
\right\}.
\]
For \(i\in I_\ell\), set
\(h_\ell:=-\boldsymbol t_{i,\ell}\cdot\boldsymbol n_\ell>0\), which is
independent of \(i\). Indeed,
\[
\tr_{F_m}^{tn}\dev(\nabla\lambda_i\otimes\boldsymbol t_{i,\ell})
=\Pi_{F_m}\nabla\lambda_i\,(\boldsymbol t_{i,\ell}\cdot\boldsymbol n_m),
\]
and, since \(\boldsymbol t_{i,j_\ell}\) is tangential to \(F_\ell\), the barycentric-coordinate identity
\[
\sum_{i\in I_\ell}
\Pi_{F_\ell}\nabla\lambda_i\,
\boldsymbol t_{i,j_\ell}^{\intercal}
=-\Pi_{F_\ell}
\]
gives
\[
\tr_{F_m}^{tn}\boldsymbol E_{\ell,\boldsymbol A}
=\delta_{m\ell}h_\ell\boldsymbol A\boldsymbol x_{F_\ell}.
\]
Consequently, the map
\(\boldsymbol A\mapsto\tr_{F_\ell}^{tn}\boldsymbol E_{\ell,\boldsymbol A}\)
is an isomorphism from \(\mathbb K_{F_\ell}\) onto
\(\mathbb K_{F_\ell}\boldsymbol x_{F_\ell}\). These rotational traces have
zero facet mean and therefore have trivial intersection with the constant
tangential-normal traces generated by \(\mathbb P_0(T;\mathbb T)\). In a
vanishing linear combination, facet averaging first gives that the constant
component has zero tangential-normal trace on every facet. The \(k=0\) case of
\eqref{eq:geometric} then yields that this component vanishes, and the trace
identity gives \(\boldsymbol A=0\) facet by facet. Hence the sum defining
\(\mathbb P_0^+(T;\mathbb T)\) is direct.

The same argument shows that the
tangential-normal trace map is injective on \(\mathbb P_0^+(T;\mathbb T)\), and
thus
\(\mathbb P_0^+(T;\mathbb T)\cap
\mathbb B_1^{tn}(T;\mathbb T)=\{0\}\).
We then define
\begin{equation}\label{eq:Sigma+TS0}
\Sigma_0^+(T;\mathbb S\cap\mathbb T)
:=
\left\{
\boldsymbol\tau\in
\mathbb P_0^+(T;\mathbb T)\oplus\mathbb B_1^{tn}(T;\mathbb T):
(\boldsymbol\tau,\boldsymbol q)_T=0
\quad
\forall\,\boldsymbol q\in\mathbb P_0(T;\mathbb K)
\right\}.
\end{equation}
The DoFs are
\begin{subequations}\label{eq:tndofTS0}
	\begin{align}
		(\boldsymbol t_{F,i}^{\intercal}\boldsymbol\tau\boldsymbol n,q)_F,
		&\qquad
		q\in\mathbb P_0(F),\;
		i=1,\ldots,d-1,\; F\in\mathcal F(T),
		\label{eq:tndofTS01}\\
		(\Pi_F(\boldsymbol\tau\boldsymbol n),\boldsymbol q)_F,
		&\qquad
		\boldsymbol q\in \mathbb K_F\boldsymbol x_F,\;
		F\in\mathcal F(T),
		\label{eq:tndofTS02}\\
		(\boldsymbol\tau,\boldsymbol q)_T,
		&\qquad
		\boldsymbol q\in
		\mathbb P_0(T;\mathbb S\cap\mathbb T).
		\label{eq:tndofTS03}
	\end{align}
\end{subequations}
The second family of facet DoFs is trivial when \(d=2\).

\begin{lemma}\label{le:unisolventTS0}
	The DoFs \eqref{eq:tndofTS0} are unisolvent for
	\(\Sigma_0^+(T;\mathbb S\cap\mathbb T)\).
\end{lemma}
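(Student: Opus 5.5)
We follow the same two-step pattern as in Lemma~\ref{le:unisolventTS}: a dimension count, then a uniqueness argument that reduces to the trace and bubble structure.

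\emph{Dimension count.} The sum \(\mathbb P_0^+(T;\mathbb T)\oplus\mathbb B_1^{tn}(T;\mathbb T)\) is direct, as shown above. Its dimension is
\[
(d^2-1)+(d+1)\dim\mathbb K_F+\dim\mathbb B_1^{tn}(T;\mathbb T).
\]
By \eqref{eq:geometricSigma} with \(k=0\),
\[
\dim\mathbb B_1^{tn}(T;\mathbb T)=\dim\Sigma_0^+(T;\mathbb T)-(d+1)(d-1)=d^2-1 .
\]
Hence the ambient space has dimension
\[
2(d^2-1)+(d+1)\frac{(d-1)(d-2)}{2}.
\]
Next we check that the \(\dim\mathbb K=d(d-1)/2\) weak-symmetry constraints are independent. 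The space \(\mathbb P_0(T;\mathbb K)\subset\mathbb P_0(T;\mathbb T)\) lies in the ambient space, so the pairing against \(\mathbb P_0(T;\mathbb K)\) is surjective onto its dual. Subtracting, the dimension of \(\Sigma_0^+(T;\mathbb S\cap\mathbb T)\) equals
\[
(d+1)(d-1)+(d+1)\dim\mathbb K_F+\dim(\mathbb S\cap\mathbb T).
\]
Indeed, \(d^2-1-\dim\mathbb K=\dim(\mathbb S\cap\mathbb T)\). This is exactly the number of DoFs in \eqref{eq:tndofTS01}, \eqref{eq:tndofTS02} and \eqref{eq:tndofTS03}.

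\emph{Uniqueness.} Let \(\boldsymbol\tau=\boldsymbol\tau_0+\sum_\ell\boldsymbol E_{\ell,\boldsymbol A_\ell}+\boldsymbol b\), with \(\boldsymbol\tau_0\in\mathbb P_0(T;\mathbb T)\), \(\boldsymbol A_\ell\in\mathbb K_{F_\ell}\) and \(\boldsymbol b\in\mathbb B_1^{tn}(T;\mathbb T)\), have all DoFs equal to zero.

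On \(F_\ell\), the tangential-normal trace is
\[
\tr_{F_\ell}^{tn}\boldsymbol\tau=\tr_{F_\ell}^{tn}\boldsymbol\tau_0+h_\ell\boldsymbol A_\ell\boldsymbol x_{F_\ell},
\]
by the trace identity \(\tr_{F_m}^{tn}\boldsymbol E_{\ell,\boldsymbol A}=\delta_{m\ell}h_\ell\boldsymbol A\boldsymbol x_{F_\ell}\) and the vanishing trace of \(\boldsymbol b\). The first term is constant and the second has zero mean on \(F_\ell\), and the two are \(L^2(F_\ell)\)-orthogonal.

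\eqref{eq:tndofTS01} then kills the constant part, so \(\tr_{F_\ell}^{tn}\boldsymbol\tau_0=0\) for all \(\ell\). By the \(k=0\) case of \eqref{eq:geometric}, this gives \(\boldsymbol\tau_0=0\). Testing \eqref{eq:tndofTS02} with \(\boldsymbol q=\boldsymbol A_\ell\boldsymbol x_{F_\ell}\) gives \(\boldsymbol A_\ell\boldsymbol x_{F_\ell}=0\) on \(F_\ell\). Since \(\boldsymbol A\mapsto\boldsymbol A\boldsymbol x_{F_\ell}\) is injective on \(\mathbb K_{F_\ell}\), we get \(\boldsymbol A_\ell=0\). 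Hence \(\boldsymbol\tau=\boldsymbol b\in\mathbb B_1^{tn}(T;\mathbb T)\).

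The weak-symmetry condition in \eqref{eq:Sigma+TS0} and \eqref{eq:tndofTS03} together give \((\boldsymbol b,\boldsymbol q)_T=0\) for all \(\boldsymbol q\in\mathbb P_0(T;\mathbb S\cap\mathbb T)\oplus\mathbb P_0(T;\mathbb K)=\mathbb P_0(T;\mathbb T)\). Now \(\boldsymbol b\in\Sigma_0^+(T;\mathbb T)\) has vanishing facet moments \eqref{eq:tndof1} and cell moments \eqref{eq:tndof2} with \(k=0\). Lemma~\ref{le:unisolventT} then yields \(\boldsymbol b=0\).

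\emph{Main obstacle.} The delicate step is the decoupling of the facet traces of the enrichment modes from those of the constant modes, which is what lets the facet DoFs be peeled off in order. This rests entirely on the trace identity for \(\boldsymbol E_{\ell,\boldsymbol A}\) and the orthogonality of constants to \(\mathbb K_F\boldsymbol x_F\), both established before the lemma. The remainder is routine bookkeeping: verifying the dimension of \(\mathbb B_1^{tn}\), and checking that \(\mathbb P_0(T;\mathbb K)\) lies in the ambient space so that the constraints are independent.
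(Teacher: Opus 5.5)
Your proposal is correct and follows essentially the same route as the paper. Both arguments count dimensions via the independence of the weak-symmetry constraints, then show that the tangential--normal trace lies in ${\rm RM}(F)$ and is killed by the orthogonal facet moments, which reduces the problem to $\mathbb B_1^{tn}(T;\mathbb T)$, where Lemma~\ref{le:unisolventT} with $k=0$ finishes. The only difference is that you kill $\boldsymbol\tau_0$ and the $\boldsymbol A_\ell$ one at a time, whereas the paper simply notes that a linear traceless tensor with zero tangential--normal trace on every facet lies in $\mathbb B_1^{tn}(T;\mathbb T)$ by definition.
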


\begin{proof}
	The weak-symmetry conditions are independent since
	\(\mathbb P_0(T;\mathbb K)\subset\mathbb P_0^+(T;\mathbb T)\)
	and the \(L^2(T)\)-pairing is nondegenerate.
	Together with the direct-sum
	properties above, this gives
	\(
	\dim\Sigma_0^+(T;\mathbb S\cap\mathbb T)
	=
	\frac{1}{2}d^2(d+1)-1.
	\)
	This agrees with the number of DoFs in \eqref{eq:tndofTS0}. It remains to prove uniqueness.
	
	Let \(\boldsymbol\tau\in\Sigma_0^+(T;\mathbb S\cap\mathbb T)\) have all
	DoFs in \eqref{eq:tndofTS0} equal to zero. By the trace property of the enrichment above, and since
	\(\mathbb B_1^{tn}(T;\mathbb T)\) has zero tangential-normal trace, the
	trace \(\tr_F^{tn}\boldsymbol\tau\) belongs to \({\rm RM}(F)\) on each facet.
	Since the decomposition of \({\rm RM}(F)\) is \(L^2(F)\)-orthogonal, the
	vanishing moments in \eqref{eq:tndofTS01}--\eqref{eq:tndofTS02} imply
	\(\tr_F^{tn}\boldsymbol\tau=0\) for $F\in\mathcal F(T)$.
	Thus \(\boldsymbol\tau\in\mathbb B_1^{tn}(T;\mathbb T)\). Moreover, the
	cell moments \eqref{eq:tndofTS03} and the weak-symmetry condition give
	\[
	(\boldsymbol\tau,\boldsymbol q)_T=0
	\quad
	\forall\,\boldsymbol q\in\mathbb P_0(T;\mathbb T),
	\]
	since \(\mathbb T=(\mathbb S\cap\mathbb T)\oplus\mathbb K\). Applying
	Lemma~\ref{le:unisolventT} with \(k=0\) yields
	\(\boldsymbol\tau=0\).
\end{proof}

For \(k\ge0\) and \(T\in\mathcal T_h\), we have
\begin{equation}\label{eq:Qkt2ST}
\mathbb P_k(T;\mathbb S\cap\mathbb T)
\subset \Sigma_k^+(T;\mathbb S\cap\mathbb T),\quad 
 Q_{k,T}\Sigma_k^+(T;\mathbb S\cap\mathbb T)=\mathbb P_k(T;\mathbb S\cap\mathbb T).	
\end{equation}
Indeed, the first inclusion follows from the \(L^2\)-orthogonality between symmetric and skew-symmetric tensors. For the second identity, the weak symmetry condition implies that \(Q_{k,T}\boldsymbol\tau\) has no component in \(\mathbb P_k(T;\mathbb K)\).

For \(k\ge0\), the local norm equivalence is
\begin{equation}\label{eq:local_stress_norm}
\|\boldsymbol\tau_h\|_{0,T}^2
\eqsim
\|Q_{k,T}\boldsymbol\tau_h\|_{0,T}^2
+\sum_{F\in\mathcal F(T)}
h_F\|\Pi_F\boldsymbol\tau_h\boldsymbol n_{\partial T}\|_{0,F}^2,\quad\forall\,\boldsymbol\tau_h\in
\Sigma_k^+(T;\mathbb S\cap\mathbb T).
\end{equation}
The constants depend only on \(d\), \(k\), and the mesh shape regularity.
By unisolvence, the right-hand side defines a norm on the local
finite-dimensional space. Uniform equivalence on a shape-regular family
follows by scaling and a standard compactness argument.

For \(d\ge3\), the additional facet moments \eqref{eq:tndofTS02} control
the rotational component in \({\rm RM}(F)\), which is required in the
discrete inf--sup argument based on Lemma~\ref{lem:facet_coupling_korn}.

\subsection{Global tensor space}
For each facet \(F\in\mathcal F_h\), set
\[
\mathcal R_k^t(F):=
\begin{cases}
\mathbb P_k(F;\mathbb R^{d-1}), & k\ge1,\\
{\rm RM}(F), & k=0,
\end{cases}
\]
and let \(Q_F^t:L^2(F;\mathbb R^{d-1})\to\mathcal R_k^t(F)\) be the
\(L^2(F)\)-orthogonal projector. Since
\(\boldsymbol n_F=\pm\boldsymbol n_{\partial T}\), the local constructions give, for every
\(\boldsymbol\tau\in\Sigma_k^+(T;\mathbb S\cap\mathbb T)\),
$\Pi_F(\boldsymbol\tau\boldsymbol n_F)\in\mathcal R_k^t(F)$ for
$F\in\mathcal F(T)$.
Using the fixed facet orientations
\(\boldsymbol n_F\), define the global finite element space for the viscous stress
\[
\begin{aligned}
\Sigma_h^{tn}:=\{\boldsymbol\tau_h\in \Sigma_h^{-1}:\;&[\![\Pi_F\boldsymbol\tau_h\boldsymbol n_F]\!]=\boldsymbol0
\quad\forall\,F\in\mathring{\mathcal F}_h
\},
\end{aligned}
\]
where $\Sigma_h^{-1}:=\prod_{T\in\mathcal T_h}
\Sigma_k^+(T;\mathbb S\cap\mathbb T)$.
Equivalently, all tangential-normal facet DoFs are single-valued across
interior facets. No boundary condition is imposed on these stress moments for
the no-slip problem.

\begin{remark}
The space \(\Sigma_h^{tn}\) is pointwise traceless but only weakly symmetric.
In contrast to strongly symmetric conforming tensor elements, its global
coupling consists solely of tangential-normal facet moments, while symmetry is
enforced locally through cell moments. In particular, the construction
requires no vertex degrees of freedom or interelement vertex constraints. In
general, \(\Sigma_h^{tn}\not\subset H(\div,\Omega;\mathbb T)\), since the full
normal trace need not be continuous.
\end{remark}

\section{Continuous Distributional Formulation}\label{sec:Brinkman}

We formulate the first-order Brinkman system
\eqref{eq:Brinkman1st_intro} as a continuous distributional mixed problem.
The analysis is based on a stable decomposition of
\((H_0(\div,\Omega))'\), which yields \(\nu\)-uniform well-posedness;
equivalence with the primal formulation is then established.

We introduce the space
\[
H^{-1}(\curl\div,\Omega;\mathbb S\cap\mathbb T)
:=
\bigl\{
\boldsymbol\tau\in L^2(\Omega;\mathbb S\cap\mathbb T):
\div\boldsymbol\tau\in(H_0(\div,\Omega))'
\bigr\},
\]
equipped with the norm
\(
\|\boldsymbol{\tau}\|_{H^{-1}(\curl\div)}^2
:=
\|\boldsymbol{\tau}\|_0^2
+
\|\div\boldsymbol{\tau}\|_{(H_0(\div,\Omega))'}^2,
\)
where
\[
\|\div\boldsymbol{\tau}\|_{(H_0(\div,\Omega))'}
:=
\sup_{\boldsymbol v\in H_0(\div,\Omega),\ \boldsymbol v\neq 0}
\frac{\langle\div\boldsymbol{\tau},\boldsymbol v\rangle}
{\|\boldsymbol v\|_{H(\div)}} .
\]
For \(d=3\), the condition \(\div\boldsymbol\tau\in (H_0(\div,\Omega))'\) is an equivalent dual characterization of the \(H^{-1}(\curl\div)\) regularity used in \cite[Section 2.2]{ChenHuangZhang2023}; we use this dual characterization to define the symmetric--traceless \(H^{-1}(\curl\div)\) stress space for all \(d\ge2\). For
\(q\in L^2(\Omega)\), define
\(\nabla q\in(H_0(\div,\Omega))'\) by
\(\langle\nabla q,\boldsymbol v\rangle:=-(q,\div\boldsymbol v)\) for
\(\boldsymbol v\in H_0(\div,\Omega)\).

Let \(\boldsymbol f\in(H_0(\div,\Omega))'\). A distributional mixed formulation of the first-order
system \eqref{eq:Brinkman1st_intro} seeks 
\((\boldsymbol\sigma,\boldsymbol u,p)\in 
H^{-1}(\curl\div,\Omega;\mathbb S\cap\mathbb T)
\times H_0(\div,\Omega)\times L_0^2(\Omega)\)
such that
\begin{subequations}\label{eq:BrinkmanMixed}
\begin{align}
a(\boldsymbol\sigma,p;\boldsymbol\tau,q)
+b(\boldsymbol\tau,q;\boldsymbol u)&=0,
\label{eq:BrinkmanMixed1}\\
b(\boldsymbol\sigma,p;\boldsymbol v)
-(\boldsymbol u,\boldsymbol v)
&=-\langle\boldsymbol f,\boldsymbol v\rangle,
\label{eq:BrinkmanMixed2}
\end{align}
for all
\(\boldsymbol\tau\in
H^{-1}(\curl\div,\Omega;\mathbb S\cap\mathbb T)\),
\(q\in L_0^2(\Omega)\), and
\(\boldsymbol v\in H_0(\div,\Omega)\).
\end{subequations}
Here
\[
a(\boldsymbol\sigma,p;\boldsymbol\tau,q)
:=\nu^{-1}(\boldsymbol\sigma,\boldsymbol\tau),
\qquad
b(\boldsymbol\tau,q;\boldsymbol v)
:=
\langle\div\boldsymbol\tau,\boldsymbol v\rangle
-(q,\div\boldsymbol v).
\]

\begin{lemma}\label{lem:continuous_infsup}
	The following stable decomposition holds:
	\begin{equation}\label{eq:decomposition}
		(H_0(\div,\Omega))'
		=
		\div H^{-1}(\curl\div,\Omega;\mathbb S\cap\mathbb T)
		+\nabla L_0^2(\Omega).
	\end{equation}
	That is, every \(\boldsymbol w\in (H_0(\div,\Omega))'\) can be written as
	\(\boldsymbol w=\div\boldsymbol\tau+\nabla q\), with
	\(\boldsymbol\tau\in
	H^{-1}(\curl\div,\Omega;\mathbb S\cap\mathbb T)\),
	\(q\in L_0^2(\Omega)\), and
	\[
	\|\boldsymbol\tau\|_{H^{-1}(\curl\div)}+\|q\|_0
	\lesssim
	\|\boldsymbol w\|_{(H_0(\div,\Omega))'} .
	\]
\end{lemma}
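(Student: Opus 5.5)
The plan is to split \(\boldsymbol w\) into an \(L^2\) vector field plus a pressure gradient, using the Riesz representation in \(H_0(\div,\Omega)\). The \(L^2\) part is then handled by a Stokes problem with symmetric-gradient viscous operator, whose solution gives a symmetric and traceless stress.

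\emph{Step 1 (Riesz splitting).} Given \(\boldsymbol w\in(H_0(\div,\Omega))'\), let \(\boldsymbol z\in H_0(\div,\Omega)\) solve
\[
(\boldsymbol z,\boldsymbol v)+(\div\boldsymbol z,\div\boldsymbol v)=\langle\boldsymbol w,\boldsymbol v\rangle
\qquad\forall\,\boldsymbol v\in H_0(\div,\Omega),
\]
so that \(\|\boldsymbol z\|_{H(\div)}=\|\boldsymbol w\|_{(H_0(\div,\Omega))'}\). Set \(q_1:=-\div\boldsymbol z\). Since \(\boldsymbol z\cdot\boldsymbol n=0\) on \(\partial\Omega\), the divergence theorem gives \(q_1\in L_0^2(\Omega)\). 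By the definition of \(\nabla q\) we have \((\div\boldsymbol z,\div\boldsymbol v)=\langle\nabla q_1,\boldsymbol v\rangle\), and therefore
\[
\boldsymbol w=\boldsymbol z+\nabla q_1 \quad\text{in }(H_0(\div,\Omega))',
\qquad \|q_1\|_0+\|\boldsymbol z\|_0\le 2\|\boldsymbol w\|_{(H_0(\div,\Omega))'}.
\]

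\emph{Step 2 (Stokes problem for the \(L^2\) part).} Let \((\boldsymbol u,q_2)\in H_0^1(\Omega;\mathbb R^d)\times L_0^2(\Omega)\) solve
\[
-(\boldsymbol\varepsilon(\boldsymbol u),\boldsymbol\varepsilon(\boldsymbol v))+(q_2,\div\boldsymbol v)=(\boldsymbol z,\boldsymbol v),
\qquad
(\div\boldsymbol u,r)=0,
\]
for all \(\boldsymbol v\in H_0^1\) and \(r\in L_0^2\). This problem is well posed by Korn's inequality on \(H_0^1\) and the standard Stokes inf--sup condition, with
\[
\|\boldsymbol u\|_1+\|q_2\|_0\lesssim\|\boldsymbol z\|_{-1}\le\|\boldsymbol z\|_0 .
\]
Put \(\boldsymbol\tau:=\boldsymbol\varepsilon(\boldsymbol u)\). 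It is symmetric, and it is traceless because \(\tr\boldsymbol\varepsilon(\boldsymbol u)=\div\boldsymbol u=0\). Hence \(\boldsymbol\tau\in L^2(\Omega;\mathbb S\cap\mathbb T)\). For \(\boldsymbol v\in H_0^1\), symmetry of \(\boldsymbol\tau\) gives \(\langle\div\boldsymbol\tau,\boldsymbol v\rangle=-(\boldsymbol\tau,\nabla\boldsymbol v)=-(\boldsymbol\tau,\boldsymbol\varepsilon(\boldsymbol v))\). The first Stokes equation then reads \(\div\boldsymbol\tau+\nabla q_2=\boldsymbol z\) in \(H^{-1}(\Omega;\mathbb R^d)\).

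\emph{Step 3 (lifting to the dual of \(H_0(\div)\)).} This is the delicate point, since the identity of Step 2 holds only in \(H^{-1}\), which is tested against the smaller space \(H_0^1\). However, \(\boldsymbol z\) and \(\nabla q_2\) both define bounded functionals on \(H_0(\div,\Omega)\), so \(\boldsymbol z-\nabla q_2\in(H_0(\div,\Omega))'\) with norm at most \(\|\boldsymbol z\|_0+\|q_2\|_0\). Its restriction to \(H_0^1\) coincides with \(\div\boldsymbol\tau\). Because \(C_0^\infty\), and hence \(H_0^1\), is dense in \(H_0(\div,\Omega)\), this extension is unique. Thus \(\div\boldsymbol\tau\in(H_0(\div,\Omega))'\), with \(\div\boldsymbol\tau=\boldsymbol z-\nabla q_2\) there and
\[
\|\div\boldsymbol\tau\|_{(H_0(\div,\Omega))'}\lesssim\|\boldsymbol z\|_0 .
\]
So \(\boldsymbol\tau\in H^{-1}(\curl\div,\Omega;\mathbb S\cap\mathbb T)\) and \(\|\boldsymbol\tau\|_{H^{-1}(\curl\div)}\lesssim\|\boldsymbol z\|_0\).

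\emph{Conclusion.} Take \(q:=q_1+q_2\in L_0^2(\Omega)\). Then \(\boldsymbol w=\div\boldsymbol\tau+\nabla q\) in \((H_0(\div,\Omega))'\). Combining the bounds of Steps 1--3 gives \(\|\boldsymbol\tau\|_{H^{-1}(\curl\div)}+\|q\|_0\lesssim\|\boldsymbol w\|_{(H_0(\div,\Omega))'}\). The constants come only from Korn's inequality and the Stokes inf--sup condition, so they are independent of \(\nu\).

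I expect the only genuine obstacle to be Step 3: the density and extension argument that identifies the \(H^{-1}\)-valued \(\div\boldsymbol\tau\) with an element of \((H_0(\div,\Omega))'\).
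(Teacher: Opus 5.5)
Your proof is correct apart from a sign slip, and it takes a genuinely different route from the paper's.

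\emph{The sign slip.} With your Stokes system, $-(\boldsymbol\varepsilon(\boldsymbol u),\boldsymbol\varepsilon(\boldsymbol v))=\langle\div\boldsymbol\tau,\boldsymbol v\rangle$ and $(q_2,\div\boldsymbol v)=-\langle\nabla q_2,\boldsymbol v\rangle$. The first equation therefore reads $\div\boldsymbol\tau-\nabla q_2=\boldsymbol z$, not $\div\boldsymbol\tau+\nabla q_2=\boldsymbol z$. You should take $q:=q_1-q_2$, or flip the sign of the pressure term.

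\emph{How the paper argues.} The paper needs neither your Riesz splitting nor the Stokes inf--sup condition. It restricts $\boldsymbol w$ to $H_0^1(\Omega;\mathbb R^d)$ and solves the Korn/Lax--Milgram problem $(\boldsymbol\varepsilon(\boldsymbol z),\boldsymbol\varepsilon(\boldsymbol v))=\langle\boldsymbol w,\boldsymbol v\rangle$. It sets $\boldsymbol\sigma:=-\boldsymbol\varepsilon(\boldsymbol z)$ and splits this symmetric stress algebraically, taking $\boldsymbol\tau:=\dev\boldsymbol\sigma$ and $q:=\frac1d\tr\boldsymbol\sigma=-\frac1d\div\boldsymbol z\in L_0^2(\Omega)$. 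Tracelessness thus comes from pushing the trace into the pressure. In your route it comes from the incompressibility constraint $\div\boldsymbol u=0$, which is exactly why you need the surjectivity of $\div:H_0^1\to L_0^2$ to solve the Stokes problem.

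\emph{Your Step 1 is not needed.} The Stokes problem is already well posed for the load $\boldsymbol w|_{H_0^1}\in H^{-1}(\Omega;\mathbb R^d)$, whose norm is bounded by $\|\boldsymbol w\|_{(H_0(\div,\Omega))'}$. So you can skip the Riesz representative $\boldsymbol z$.

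\emph{The final step is the same in both proofs.} The identity holds first in $H^{-1}$, and the right-hand side is bounded on $H_0(\div,\Omega)$, so $\div\boldsymbol\tau$ extends to $(H_0(\div,\Omega))'$. The paper does this implicitly. You justify it explicitly through the density of $C_0^\infty$ in $H_0(\div,\Omega)$, which makes that step more transparent.

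\emph{What each route buys.} Yours produces $\boldsymbol\tau=\boldsymbol\varepsilon(\boldsymbol u)$ exactly, the strain of a divergence-free field, matching the structure of the Brinkman viscous stress. The paper's is more elementary, using only Korn's inequality and a single coercive solve.
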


\begin{proof}
	Let \(\boldsymbol w\in (H_0(\div,\Omega))'\). Since
	\(H_0^1(\Omega;\mathbb R^d)\hookrightarrow H_0(\div,\Omega)\), the
	restriction of \(\boldsymbol w\) to \(H_0^1(\Omega;\mathbb R^d)\)
	belongs to \(H^{-1}(\Omega;\mathbb R^d)\), with
	\(
	\|\boldsymbol w\|_{H^{-1}(\Omega)}
	\lesssim
	\|\boldsymbol w\|_{(H_0(\div,\Omega))'} .
	\)
	By Korn's inequality and the Lax--Milgram theorem, there exists
	\(\boldsymbol z\in H_0^1(\Omega;\mathbb R^d)\) such that
	\[
	(\boldsymbol{\varepsilon}(\boldsymbol z),\boldsymbol{\varepsilon}(\boldsymbol v))
	=
	\langle\boldsymbol w,\boldsymbol v\rangle
	\qquad
	\forall\,\boldsymbol v\in H_0^1(\Omega;\mathbb R^d),
	\]
	and
	\[
	\|\boldsymbol{\varepsilon}(\boldsymbol z)\|_0
	\lesssim
	\|\boldsymbol w\|_{(H_0(\div,\Omega))'} .
	\]
	Set
	\(\boldsymbol\sigma:=-\boldsymbol{\varepsilon}(\boldsymbol z)
	\in L^2(\Omega;\mathbb S)\).
	Then, for every
	\(\boldsymbol v\in H_0^1(\Omega;\mathbb R^d)\),
	\[
	\langle\div\boldsymbol\sigma,\boldsymbol v\rangle
	=
	-(\boldsymbol\sigma,\grad\boldsymbol v)
	=
	(\boldsymbol{\varepsilon}(\boldsymbol z),\boldsymbol{\varepsilon}(\boldsymbol v))
	=
	\langle\boldsymbol w,\boldsymbol v\rangle .
	\]
	Hence
	\[
	\div\boldsymbol\sigma=\boldsymbol w
	\quad\text{in }H^{-1}(\Omega;\mathbb R^d),
	\qquad
	\|\boldsymbol\sigma\|_0
	\lesssim
	\|\boldsymbol w\|_{(H_0(\div,\Omega))'} .
	\]

	Set \(\boldsymbol\tau:=\dev\boldsymbol\sigma
	\in L^2(\Omega;\mathbb S\cap\mathbb T)\) and
	\(q:=d^{-1}\tr\boldsymbol\sigma\in L_0^2(\Omega)\), where the zero mean follows from \(\boldsymbol z\in H_0^1(\Omega;\mathbb R^d)\).
	Since
	\(\boldsymbol\sigma=\boldsymbol\tau+q\boldsymbol I\), we have
	\[
	\boldsymbol w=\div\boldsymbol\tau+\nabla q
	\quad\text{in }H^{-1}(\Omega;\mathbb R^d),
	\qquad
	\|\boldsymbol\tau\|_0+\|q\|_0
	\lesssim
	\|\boldsymbol w\|_{(H_0(\div,\Omega))'} .
	\]
	Moreover,
	\(\nabla q\in(H_0(\div,\Omega))'\) and
	\(\|\nabla q\|_{(H_0(\div,\Omega))'}\le \|q\|_0\).
	Therefore
	\(\div\boldsymbol\tau =
	\boldsymbol w-\nabla q
	\in(H_0(\div,\Omega))'\),
	so that
	\(\boldsymbol\tau\in
	H^{-1}(\curl\div,\Omega;\mathbb S\cap\mathbb T)\). Furthermore,
	\[
	\|\div\boldsymbol\tau\|_{(H_0(\div,\Omega))'}
	\le
	\|\boldsymbol w\|_{(H_0(\div,\Omega))'}
	+\|q\|_0.
	\]
	Combining the preceding estimates proves the asserted stable decomposition.
\end{proof}

Lemma~\ref{lem:continuous_infsup} immediately yields the continuous inf--sup
condition
\[
\|\boldsymbol v\|_{H(\div)}
\lesssim
\sup_{\substack{
	(\boldsymbol\tau,q)\in
	H^{-1}(\curl\div,\Omega;\mathbb S\cap\mathbb T)\times L_0^2(\Omega)\\
	(\boldsymbol\tau,q)\neq(\boldsymbol 0,0)}}
\frac{
	b(\boldsymbol\tau,q;\boldsymbol v)
}{
	\|\boldsymbol\tau\|_{H^{-1}(\curl\div)}+\|q\|_0
}
\;\;\;
\forall\,\boldsymbol v\in H_0(\div,\Omega).
\]
To obtain stability uniformly with respect to \(\nu\), we retain the standard
norms on \(H_0(\div,\Omega)\) and \(L_0^2(\Omega)\), and define
\begin{equation}\label{eq:BrinkmanContinuousStressNorm}
\|\boldsymbol\tau\|_{H^{-1}(\curl\div),\nu}^2
:=
\nu^{-1}\|\boldsymbol\tau\|_0^2
+\|\div\boldsymbol\tau\|_{(H_0(\div,\Omega))'}^2.
\end{equation}

\begin{lemma}\label{lem:BrinkmanContinuousCouplingEstimate}
	For every
	\((\boldsymbol\tau,q)\in
	H^{-1}(\curl\div,\Omega;\mathbb S\cap\mathbb T)\times L_0^2(\Omega)\),
	\begin{equation}\label{eq:BrinkmanContinuousCouplingEstimate}
	\|\div\boldsymbol\tau\|_{(H_0(\div,\Omega))'}+\|q\|_0
	\lesssim
	\|\boldsymbol\tau\|_0
	+
	\sup_{\boldsymbol v\in H_0(\div,\Omega),\ \boldsymbol v\ne0}
	\frac{b(\boldsymbol\tau,q;\boldsymbol v)}
	{\|\boldsymbol v\|_{H(\div)}}.
	\end{equation}
\end{lemma}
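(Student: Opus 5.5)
Set \(S:=\sup_{\boldsymbol v\ne0}b(\boldsymbol\tau,q;\boldsymbol v)/\|\boldsymbol v\|_{H(\div)}\). The plan is to bound \(\|q\|_0\) first, using \(\boldsymbol\tau\) only through its \(L^2\)-norm, and then recover \(\div\boldsymbol\tau\) from the definition of \(b\).

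\emph{Step 1 (pressure).} Since \(q\in L_0^2(\Omega)\), the standard Stokes inf--sup condition provides \(\boldsymbol v_q\in H_0^1(\Omega;\mathbb R^d)\subset H_0(\div,\Omega)\) with
\[
-\div\boldsymbol v_q=q,\qquad \|\boldsymbol v_q\|_1\lesssim\|q\|_0 .
\]
In particular \(\|\boldsymbol v_q\|_{H(\div)}\lesssim\|q\|_0\). Because \(\boldsymbol v_q\in H_0^1\), the distributional divergence is given by integration by parts:
\[
\langle\div\boldsymbol\tau,\boldsymbol v_q\rangle=-(\boldsymbol\tau,\grad\boldsymbol v_q).
\]
This is consistent with the definition of \(\div\boldsymbol\tau\in(H_0(\div,\Omega))'\): that functional restricted to \(H_0^1\) is the distributional divergence of the \(L^2\) field \(\boldsymbol\tau\), and \(C_0^\infty\) is dense in \(H_0^1\). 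Consequently
\[
\|q\|_0^2=-(q,\div\boldsymbol v_q)=b(\boldsymbol\tau,q;\boldsymbol v_q)+(\boldsymbol\tau,\grad\boldsymbol v_q)\le \bigl(S+\|\boldsymbol\tau\|_0\bigr)\|\boldsymbol v_q\|_1 .
\]
Dividing by \(\|q\|_0\) gives \(\|q\|_0\lesssim S+\|\boldsymbol\tau\|_0\).

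\emph{Step 2 (divergence).} For any \(\boldsymbol v\in H_0(\div,\Omega)\),
\[
\langle\div\boldsymbol\tau,\boldsymbol v\rangle=b(\boldsymbol\tau,q;\boldsymbol v)+(q,\div\boldsymbol v)\le (S+\|q\|_0)\|\boldsymbol v\|_{H(\div)} .
\]
Hence \(\|\div\boldsymbol\tau\|_{(H_0(\div,\Omega))'}\le S+\|q\|_0\). Combining this with Step 1 yields \eqref{eq:BrinkmanContinuousCouplingEstimate}.

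\emph{Main point to watch.} Step 1 requires a test function in \(H^1\) rather than merely in \(H(\div)\): this is what lets \(\langle\div\boldsymbol\tau,\cdot\rangle\) be bounded by \(\|\boldsymbol\tau\|_0\). The argument must also use the zero mean of \(q\), which is why the Stokes right inverse of the divergence applies. The justification that the duality pairing coincides with \(-(\boldsymbol\tau,\grad\boldsymbol v)\) on \(H_0^1\) is the only subtle point, and it follows from density as noted above.
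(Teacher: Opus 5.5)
Your proof is correct and follows essentially the same route as the paper: you lift \(q\) through the stable right inverse of the divergence from \(H_0^1(\Omega;\mathbb R^d)\) onto \(L_0^2(\Omega)\), test \(b(\boldsymbol\tau,q;\cdot)\) with this lift using \(\langle\div\boldsymbol\tau,\boldsymbol v\rangle=-(\boldsymbol\tau,\grad\boldsymbol v)\), and then read off the bound on \(\div\boldsymbol\tau\) from the definition of \(b\). Your sign choice \(-\div\boldsymbol v_q=q\) and your density justification of the integration-by-parts identity are harmless refinements of the same argument.
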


\begin{proof}
	The stable surjection
	\(\div H_0^1(\Omega;\mathbb R^d)=L_0^2(\Omega)\)
	\cite[Theorem~4.1]{AcostaDuranMuschietti2006} yields
	\(\boldsymbol v\in H_0^1(\Omega;\mathbb R^d)\) such that
	\(\div\boldsymbol v=q\) and
	\(\|\boldsymbol v\|_1\lesssim\|q\|_0\).
	Testing \(b(\boldsymbol\tau,q;\cdot)\) with \(\boldsymbol v\) and using
	\(\langle\div\boldsymbol\tau,\boldsymbol v\rangle
	=-(\boldsymbol\tau,\nabla\boldsymbol v)\) gives
	\[
	\|q\|_0\eqsim\sup_{\boldsymbol v\in H_0^1(\Omega;\mathbb R^d),\ \boldsymbol v\ne0}\frac{(\div\boldsymbol v,q)}{\|\boldsymbol{v}\|_1}
	\lesssim
	\|\boldsymbol\tau\|_0
	+
	\sup_{\boldsymbol v\in H_0(\div,\Omega),\ \boldsymbol v\ne0}
	\frac{b(\boldsymbol\tau,q;\boldsymbol v)}
	{\|\boldsymbol v\|_{H(\div)}}.
	\]
	The estimate for \(\div\boldsymbol\tau\) then follows directly from the
	definition of \(b\).
\end{proof}

\begin{theorem}\label{thm:BrinkmanContinuous}
	Let \(\boldsymbol f\in(H_0(\div,\Omega))'\). Problem
	\eqref{eq:BrinkmanMixed} admits a unique solution
	\((\boldsymbol\sigma,\boldsymbol u,p)\in
	H^{-1}(\curl\div,\Omega;\mathbb S\cap\mathbb T)
	\times H_0(\div,\Omega)\times L_0^2(\Omega)\).
	Moreover,
	\begin{equation}\label{eq:BrinkmanContinuousStability}
		\|\boldsymbol\sigma\|_{H^{-1}(\curl\div),\nu}+\|p\|_0
		+\|\boldsymbol u\|_{H(\div)}
		\lesssim
		\|\boldsymbol f\|_{(H_0(\div,\Omega))'}.
	\end{equation}
	If \(\boldsymbol f\in L^2(\Omega;\mathbb R^d)\), the right-hand side is
	bounded by \(\|\boldsymbol f\|_0\). 
\end{theorem}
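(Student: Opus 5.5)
The plan is to prove well-posedness of \eqref{eq:BrinkmanMixed} through a single global inf--sup (Babu\v{s}ka) argument for the symmetric but indefinite form
\[
\mathcal A(\boldsymbol\sigma,p,\boldsymbol u;\boldsymbol\tau,q,\boldsymbol v)
:=a(\boldsymbol\sigma,p;\boldsymbol\tau,q)+b(\boldsymbol\tau,q;\boldsymbol u)
+b(\boldsymbol\sigma,p;\boldsymbol v)-(\boldsymbol u,\boldsymbol v).
\]
The norm on \(H^{-1}(\curl\div,\Omega;\mathbb S\cap\mathbb T)\times L_0^2(\Omega)\times H_0(\div,\Omega)\) is
\[
|||(\boldsymbol\tau,q,\boldsymbol v)|||:=\|\boldsymbol\tau\|_{H^{-1}(\curl\div),\nu}+\|q\|_0+\|\boldsymbol v\|_{H(\div)}.
\]
Boundedness of \(\mathcal A\) is immediate and uniform in \(\nu\). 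First, \(\nu^{-1}(\boldsymbol\sigma,\boldsymbol\tau)\le\nu^{-1/2}\|\boldsymbol\sigma\|_0\,\nu^{-1/2}\|\boldsymbol\tau\|_0\). Second, \(\langle\div\boldsymbol\tau,\boldsymbol v\rangle\le\|\div\boldsymbol\tau\|_{(H_0(\div))'}\|\boldsymbol v\|_{H(\div)}\). The remaining terms are trivially bounded. Since \(\mathcal A\) is symmetric, one inf--sup condition suffices for both existence and uniqueness, and the inf--sup constant then yields \eqref{eq:BrinkmanContinuousStability}.

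Given \((\boldsymbol\sigma,p,\boldsymbol u)\), I will construct the test function in three pieces and add them with suitable weights.

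\textbf{Step 1.} Take \((\boldsymbol\tau,q,\boldsymbol v)=(\boldsymbol\sigma,-p,-\boldsymbol u)\). The coupling terms cancel, and the result is \(\nu^{-1}\|\boldsymbol\sigma\|_0^2+\|\boldsymbol u\|_0^2\).

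\textbf{Step 2.} By Lemma~\ref{lem:BrinkmanContinuousCouplingEstimate}, pick \(\boldsymbol v_1\in H_0(\div,\Omega)\) with \(\|\boldsymbol v_1\|_{H(\div)}=\|\div\boldsymbol\sigma\|_{(H_0(\div))'}+\|p\|_0\) and
\[
b(\boldsymbol\sigma,p;\boldsymbol v_1)\gtrsim(\|\div\boldsymbol\sigma\|_{(H_0(\div))'}+\|p\|_0)^2-C\|\boldsymbol\sigma\|_0(\|\div\boldsymbol\sigma\|_{(H_0(\div))'}+\|p\|_0).
\]
Testing with \((0,0,\boldsymbol v_1)\) also produces \(-(\boldsymbol u,\boldsymbol v_1)\). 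Both error terms are absorbed by Young's inequality together with Step 1. This uses \(\|\boldsymbol\sigma\|_0\le\nu^{-1/2}\|\boldsymbol\sigma\|_0\), which holds because \(\nu\le1\); this is the only place \(\nu\le1\) enters.

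\textbf{Step 3.} To control \(\div\boldsymbol u\), apply Lemma~\ref{lem:continuous_infsup} to the functional \(\boldsymbol w\in(H_0(\div))'\) that is the Riesz representative of \(\boldsymbol u\) in the \(H(\div)\) inner product, so that \(\langle\boldsymbol w,\boldsymbol v\rangle=(\boldsymbol u,\boldsymbol v)+(\div\boldsymbol u,\div\boldsymbol v)\). This gives \(\boldsymbol w=\div\boldsymbol\tau_2+\nabla q_2\) with \(\|\boldsymbol\tau_2\|_{H^{-1}(\curl\div)}+\|q_2\|_0\lesssim\|\boldsymbol u\|_{H(\div)}\), and hence \(b(\boldsymbol\tau_2,q_2;\boldsymbol u)=\|\boldsymbol u\|_{H(\div)}^2\).

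The main obstacle is the \(\nu\)-weighted norm of \(\boldsymbol\tau_2\): \(\nu^{-1/2}\|\boldsymbol\tau_2\|_0\) is not uniformly controlled by \(\|\boldsymbol u\|_{H(\div)}\). I will try first to avoid needing \(\boldsymbol\tau_2\) at all. Choose instead \(q_2=-\div\boldsymbol u\in L_0^2(\Omega)\) and \(\boldsymbol\tau_2=0\). Then \(b(0,q_2;\boldsymbol u)=\|\div\boldsymbol u\|_0^2\), with \(\|q_2\|_0=\|\div\boldsymbol u\|_0\), and the \(L^2\) part of \(\boldsymbol u\) is already supplied by Step 1. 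The term \(a(\boldsymbol\sigma,p;0,q_2)\) vanishes, since \(a\) does not involve \(q\). So Step 3 costs nothing in the \(\nu\)-norm.

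Summing Steps 1--3 with small weights \(\delta\) on Steps 2 and 3 gives
\[
\mathcal A\gtrsim|||(\boldsymbol\sigma,p,\boldsymbol u)|||^2,
\]
with a test function of norm \(\lesssim|||(\boldsymbol\sigma,p,\boldsymbol u)|||\). It remains to check that the test norm of \((\boldsymbol\sigma,-p,-\boldsymbol u)\) includes \(\|\div\boldsymbol\sigma\|\), which is already controlled on the left. The Babu\v{s}ka theorem then gives existence, uniqueness, and the bound by \(\|\boldsymbol f\|_{(H_0(\div))'}\). For \(\boldsymbol f\in L^2\), use \(\langle\boldsymbol f,\boldsymbol v\rangle\le\|\boldsymbol f\|_0\|\boldsymbol v\|_0\).
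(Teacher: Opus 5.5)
Your argument is correct and is essentially the paper's. You use the same two ingredients: Lemma~\ref{lem:BrinkmanContinuousCouplingEstimate} together with $\nu\le1$ to control $\|\div\boldsymbol\sigma\|_{(H_0(\div,\Omega))'}+\|p\|_0$ modulo $\nu^{-1/2}\|\boldsymbol\sigma\|_0$, and the test $(\boldsymbol 0,-\div\boldsymbol u)$ to recover $\|\div\boldsymbol u\|_0$. These are exactly the two norm equivalences the paper feeds into \cite[Theorem~2.6]{Zulehner2011}; you carry out the Babu\v{s}ka assembly by hand, as the paper does in the proof of Theorem~\ref{thm:BrinkmanDiscrete}.

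One sign slip: the Step~1 test function must be $(\boldsymbol\sigma,p,-\boldsymbol u)$, not $(\boldsymbol\sigma,-p,-\boldsymbol u)$. With $q=-p$ the pressure couplings add up to $2(p,\div\boldsymbol u)$ instead of cancelling. That term could still be absorbed by taking the Step~3 weight large, since Step~3 produces no cross terms, but the sign change is the clean fix.
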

\begin{proof}
	For any
	\((\boldsymbol\tau,q)\in
	H^{-1}(\curl\div,\Omega;\mathbb S\cap\mathbb T)\times L_0^2(\Omega)\),
	Lemma~\ref{lem:BrinkmanContinuousCouplingEstimate}, the boundedness of
	\(b\), and \(0<\nu\le1\) give
	\[
	a(\boldsymbol\tau,q;\boldsymbol\tau,q)
	+\sup_{\boldsymbol v\in H_0(\div,\Omega),\,\boldsymbol v\ne0}
	\frac{b(\boldsymbol\tau,q;\boldsymbol v)^2}
	{\|\boldsymbol v\|_{H(\div)}^2}
	\eqsim
	\|\boldsymbol\tau\|_{H^{-1}(\curl\div),\nu}^2+\|q\|_0^2.
	\]
	
	Moreover, for any \(\boldsymbol u\in H_0(\div,\Omega)\),
	\[
	\|\boldsymbol u\|_0^2
	+\sup_{\substack{(\boldsymbol\tau,q)\in
			H^{-1}(\curl\div,\Omega;\mathbb S\cap\mathbb T)\times L_0^2(\Omega)\\
			(\boldsymbol\tau,q)\ne0}}
	\frac{b(\boldsymbol\tau,q;\boldsymbol u)^2}
	{\|\boldsymbol\tau\|_{H^{-1}(\curl\div),\nu}^2+\|q\|_0^2}
	\eqsim
	\|\boldsymbol u\|_{H(\div)}^2.
	\]
	The upper bound follows from the boundedness of \(b\), while the lower
	bound follows by taking
	\((\boldsymbol\tau,q)=(\boldsymbol0,-\div\boldsymbol u)\), since
	\(\div\boldsymbol u\in L_0^2(\Omega)\).
	
	The preceding norm equivalences and
	\cite[Theorem~2.6]{Zulehner2011} yield unique solvability and
	\eqref{eq:BrinkmanContinuousStability}. If
	\(\boldsymbol f\in L^2(\Omega;\mathbb R^d)\), then
	\(\|\boldsymbol f\|_{(H_0(\div,\Omega))'}\le\|\boldsymbol f\|_0\).
\end{proof}

We next compare \eqref{eq:BrinkmanMixed} with the classical weak formulation,
which seeks
\((\boldsymbol u,p)\in H_0^1(\Omega;\mathbb R^d)\times L_0^2(\Omega)\) such that
\begin{subequations}\label{eq:BrinkmanPrimal}
\begin{align}
\nu(\boldsymbol{\varepsilon}(\boldsymbol u),\boldsymbol{\varepsilon}(\boldsymbol v))
+(\boldsymbol u,\boldsymbol v)
+(p,\div\boldsymbol v)
&=\langle\boldsymbol f,\boldsymbol v\rangle,
&&\forall\,\boldsymbol v\in H_0^1(\Omega;\mathbb R^d),
\label{eq:BrinkmanPrimal1}\\
(\div\boldsymbol u,q)&=0,
&&\forall\,q\in L_0^2(\Omega).
\label{eq:BrinkmanPrimal2}
\end{align}
\end{subequations}

\begin{theorem}\label{thm:BrinkmanEquivalence}
	Let
	\((\boldsymbol\sigma,\boldsymbol u,p)\in
	H^{-1}(\curl\div,\Omega;\mathbb S\cap\mathbb T)
	\times H_0(\div,\Omega)\times L_0^2(\Omega)\)
	solve \eqref{eq:BrinkmanMixed}. Then
	\(\boldsymbol\sigma=\nu\boldsymbol{\varepsilon}(\boldsymbol u)\), and
	\((\boldsymbol u,p)\in
	H_0^1(\Omega;\mathbb R^d)\times L_0^2(\Omega)\)
	solves \eqref{eq:BrinkmanPrimal}. Conversely, if
	\((\boldsymbol u,p)\in
	H_0^1(\Omega;\mathbb R^d)\times L_0^2(\Omega)\)
	solves \eqref{eq:BrinkmanPrimal} and
	\(\boldsymbol\sigma:=\nu\boldsymbol{\varepsilon}(\boldsymbol u)\), then
	\((\boldsymbol\sigma,\boldsymbol u,p)\) solves
	\eqref{eq:BrinkmanMixed}.
\end{theorem}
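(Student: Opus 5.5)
The plan is to prove the forward direction by extracting information from \eqref{eq:BrinkmanMixed} with judicious test functions, and to get the converse from uniqueness in Theorem~\ref{thm:BrinkmanContinuous}.

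\emph{Forward direction.} Let \((\boldsymbol\sigma,\boldsymbol u,p)\) solve \eqref{eq:BrinkmanMixed}.

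First, take \(\boldsymbol\tau=\boldsymbol 0\) in \eqref{eq:BrinkmanMixed1}. This gives \((q,\div\boldsymbol u)=0\) for all \(q\in L_0^2(\Omega)\). Since \(\boldsymbol u\in H_0(\div,\Omega)\), the divergence \(\div\boldsymbol u\) has zero mean, so \(\div\boldsymbol u=0\).

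Next, take \(q=0\) and \(\boldsymbol\tau\in C_0^\infty(\Omega;\mathbb S\cap\mathbb T)\subset H^{-1}(\curl\div,\Omega;\mathbb S\cap\mathbb T)\). Then \(\langle\div\boldsymbol\tau,\boldsymbol u\rangle=-(\boldsymbol\tau,\grad\boldsymbol u)\) in the distributional sense, and \eqref{eq:BrinkmanMixed1} reads
\[
\nu^{-1}(\boldsymbol\sigma,\boldsymbol\tau)=\langle\grad\boldsymbol u,\boldsymbol\tau\rangle .
\]
Hence \(\dev\boldsymbol{\varepsilon}(\boldsymbol u)=\nu^{-1}\boldsymbol\sigma\in L^2\) as a distribution. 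Because \(\div\boldsymbol u=0\), this means \(\boldsymbol{\varepsilon}(\boldsymbol u)=\nu^{-1}\boldsymbol\sigma\in L^2(\Omega;\mathbb S)\).

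The main obstacle is upgrading \(\boldsymbol u\in H_0(\div,\Omega)\) with \(\boldsymbol{\varepsilon}(\boldsymbol u)\in L^2\) to \(\boldsymbol u\in H_0^1(\Omega;\mathbb R^d)\), including the tangential boundary condition. For interior regularity, \(\boldsymbol u\in L^2\) and \(\boldsymbol{\varepsilon}(\boldsymbol u)\in L^2\) give \(\boldsymbol u\in H^1_{\rm loc}\). In fact \(\boldsymbol u\in H^1(\Omega)\) on a Lipschitz domain, by the Korn-type lemma that \(\boldsymbol v\in H^{-1}\) with \(\grad\boldsymbol v\in H^{-1}\) implies \(\boldsymbol v\in L^2\): each \(\partial_{jk}u_i\) is a combination of first derivatives of \(\boldsymbol{\varepsilon}(\boldsymbol u)\), and we apply this to \(\grad\boldsymbol u\).

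For the trace, I would use \eqref{eq:BrinkmanMixed1} with general smooth \(\boldsymbol\tau\in C^\infty(\overline\Omega;\mathbb S\cap\mathbb T)\), which belong to the stress space since \(\div\boldsymbol\tau\in L^2\). Integrating by parts with \(\boldsymbol u\in H^1\) gives
\[
\langle\div\boldsymbol\tau,\boldsymbol u\rangle=(\div\boldsymbol\tau,\boldsymbol u)=-(\boldsymbol\tau,\grad\boldsymbol u)+\langle\boldsymbol\tau\boldsymbol n,\boldsymbol u\rangle_{\partial\Omega}.
\]
Combining this with the identity already obtained leaves \(\langle\boldsymbol\tau\boldsymbol n,\boldsymbol u\rangle_{\partial\Omega}=0\). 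The normal component \(\boldsymbol u\cdot\boldsymbol n\) already vanishes. Choosing \(\boldsymbol\tau=\boldsymbol a\otimes\boldsymbol b+\boldsymbol b\otimes\boldsymbol a\) times cutoffs, with \(\boldsymbol a\perp\boldsymbol b\) localized near a facet of the polytope, the values \(\boldsymbol\tau\boldsymbol n\) realize arbitrary tangential fields. This yields \(\Pi\boldsymbol u=0\) on each facet, so \(\boldsymbol u\in H_0^1(\Omega;\mathbb R^d)\) and \(\boldsymbol\sigma=\nu\boldsymbol{\varepsilon}(\boldsymbol u)\).

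Finally, take \(\boldsymbol v\in H_0^1(\Omega;\mathbb R^d)\) in \eqref{eq:BrinkmanMixed2} and use \(\langle\div\boldsymbol\sigma,\boldsymbol v\rangle=-(\boldsymbol\sigma,\grad\boldsymbol v)=-\nu(\boldsymbol{\varepsilon}(\boldsymbol u),\boldsymbol{\varepsilon}(\boldsymbol v))\). This gives
\[
-\nu(\boldsymbol{\varepsilon}(\boldsymbol u),\boldsymbol{\varepsilon}(\boldsymbol v))-(p,\div\boldsymbol v)-(\boldsymbol u,\boldsymbol v)=-\langle\boldsymbol f,\boldsymbol v\rangle ,
\]
which is \eqref{eq:BrinkmanPrimal1}. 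Together with \(\div\boldsymbol u=0\), \eqref{eq:BrinkmanPrimal} holds.

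\emph{Converse.} The primal problem \eqref{eq:BrinkmanPrimal} has at most one solution, by Korn's inequality and the Stokes inf--sup condition. By Theorem~\ref{thm:BrinkmanContinuous}, the mixed problem has a solution, and the forward direction shows it produces a primal solution. By primal uniqueness this is the given \((\boldsymbol u,p)\), with \(\boldsymbol\sigma=\nu\boldsymbol{\varepsilon}(\boldsymbol u)\). Hence \((\nu\boldsymbol{\varepsilon}(\boldsymbol u),\boldsymbol u,p)\) solves \eqref{eq:BrinkmanMixed}.
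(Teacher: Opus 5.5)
Your proof is correct. The forward direction is essentially the paper's argument, with the details filled in:
\begin{enumerate}
\item testing with \(\boldsymbol\tau=\boldsymbol 0\) and with \(q=0\) gives \(\div\boldsymbol u=0\) and \(\boldsymbol\varepsilon(\boldsymbol u)=\nu^{-1}\boldsymbol\sigma\);
\item the Ne\v{c}as-type lemma upgrades \(\boldsymbol u\) to \(H^1\) (this is what the paper's citation of Amrouche--Girault supplies);
\item facet-localized symmetric traceless tensors \(\phi(\boldsymbol a\otimes\boldsymbol n_F+\boldsymbol n_F\otimes\boldsymbol a)\), with \(\boldsymbol a\) tangential, kill the tangential trace, which is what the paper's ``facetwise lifting'' does;
\item restricting \eqref{eq:BrinkmanMixed2} to \(H_0^1\) gives \eqref{eq:BrinkmanPrimal1}.
\end{enumerate}

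The converse is where you genuinely differ. The paper verifies it directly. From \eqref{eq:BrinkmanPrimal1} it gets \(\div\boldsymbol\sigma=\boldsymbol u-\boldsymbol f-\nabla p\) in \(H^{-1}\). The right-hand side lies in \((H_0(\div,\Omega))'\), so \(\boldsymbol\sigma\in H^{-1}(\curl\div,\Omega;\mathbb S\cap\mathbb T)\). Integration by parts, together with density of smooth compactly supported fields in \(H_0(\div,\Omega)\), then yields both mixed equations.

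You instead combine three ingredients: existence from Theorem~\ref{thm:BrinkmanContinuous}, the forward direction, and uniqueness for \eqref{eq:BrinkmanPrimal}. This is valid and not circular, since Theorem~\ref{thm:BrinkmanContinuous} is proved independently.

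Your route spares you the explicit regularity check and the density extension. However, it leans on the full well-posedness machinery, namely the stable decomposition of Lemma~\ref{lem:continuous_infsup} and Zulehner's theorem. The paper's argument is self-contained and shows directly why \(\nu\boldsymbol\varepsilon(\boldsymbol u)\) belongs to the stress space. It would also survive in settings where solvability of the mixed problem had not been established.

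A minor point: primal uniqueness needs no Korn inequality. Testing the homogeneous problem with \(\boldsymbol v=\boldsymbol u\) already gives \(\|\boldsymbol u\|_0=0\), and \(p=0\) then follows from the divergence surjection.
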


\begin{proof}
	Taking \(\boldsymbol\tau=\boldsymbol0\) and \(q=0\), respectively, in
	\eqref{eq:BrinkmanMixed1} gives
	\(\div\boldsymbol u=0\) and
	\(\nu^{-1}\boldsymbol\sigma
	=\dev\boldsymbol{\varepsilon}(\boldsymbol u)=\boldsymbol{\varepsilon}(\boldsymbol u)\)
	in distributions. Hence
	\(\boldsymbol u\in H^1(\Omega;\mathbb R^d)\) by
	\cite[Theorem~2.3 and Proposition~2.10]{AmroucheGirault1994}.

A facetwise \(H^1\)-lifting of arbitrary tangential boundary data into symmetric traceless tensors, applied to \eqref{eq:BrinkmanMixed1}, shows that the tangential trace of \(\boldsymbol u\) vanishes on \(\partial\Omega\). Since
	\(\boldsymbol u\in H_0(\div,\Omega)\), its normal trace also vanishes, and
	therefore
	\(\boldsymbol u\in H_0^1(\Omega;\mathbb R^d)\).
	Equation~\eqref{eq:BrinkmanMixed2} gives
	\eqref{eq:BrinkmanPrimal1}, while
	\eqref{eq:BrinkmanPrimal2} follows from
	\(\div\boldsymbol u=0\).

	Conversely, let \((\boldsymbol u,p)\) solve
	\eqref{eq:BrinkmanPrimal} and set
	\(\boldsymbol\sigma=\nu\boldsymbol{\varepsilon}(\boldsymbol u)\).
	Then \(\div\boldsymbol u=0\) and
	\[
	\div\boldsymbol\sigma+\nabla p
	=\boldsymbol u-\boldsymbol f
	\quad\text{in }H^{-1}(\Omega;\mathbb R^d).
	\]
	Since \(\boldsymbol u-\boldsymbol f\) and
	\(\nabla p\) belong to \((H_0(\div,\Omega))'\), it follows that
	\(\div\boldsymbol\sigma\in(H_0(\div,\Omega))'\), and hence
	\(\boldsymbol\sigma\in
	H^{-1}(\curl\div,\Omega;\mathbb S\cap\mathbb T)\).
	Integration by parts now yields \eqref{eq:BrinkmanMixed}.
\end{proof}

%
%
%
%
%

\section{The Distributional Mixed Element Method}\label{sec:BrinkmanDiscrete}

Throughout the discrete formulation and error analysis, we assume
\(\boldsymbol f\in L^2(\Omega;\mathbb R^d)\).
Using the tensor elements constructed in Section~\ref{sec:weaklysymmtrictracelesstnelements}, we discretize the
continuous distributional formulation of Section~\ref{sec:Brinkman} and establish the stability properties of the resulting mixed method.

\subsection{Discrete spaces and interpolation operators}

\subsubsection{Velocity--pressure spaces}
For \(k\geq0\), we use the \(H(\div)\)-conforming
Brezzi--Douglas--Marini (BDM) element of order \(k+1\) for the velocity
\cite{BrezziDouglasMarini1985,Nedelec1986,BrezziDouglasDuranFortin1987}.
Its local space on \(T\in\mathcal T_h\) is
\(\mathbb P_{k+1}(T;\mathbb R^d)\), with DoFs
\cite[Theorem~3.14]{ChenHuang2022}
\begin{subequations}\label{eq:HdivDoF}
	\begin{align}
		(\boldsymbol v\cdot\boldsymbol n,q)_F,
		&\qquad
		q\in\mathbb P_{k+1}(F),\;
		F\in\mathcal F(T),
		\label{Hdivfemdof1}\\
		(\boldsymbol v,\boldsymbol q)_T,
		&\qquad
		\boldsymbol q\in
		\grad\mathbb P_k(T)\oplus
		\mathbb P_{k-1}(T;\mathbb K)\boldsymbol x.
		\label{Hdivfemdof2}
	\end{align}
\end{subequations}

The corresponding global velocity spaces are
\begin{align*}
	\mathbb V_h^{\div}
	&:=
	\{\boldsymbol v_h\in H(\div,\Omega):
	\boldsymbol v_h|_T\in\mathbb P_{k+1}(T;\mathbb R^d)
	\quad\forall\,T\in\mathcal T_h\},\\
	\mathring{\mathbb V}_h^{\div}
	&:=
	\mathbb V_h^{\div}\cap H_0(\div,\Omega).
\end{align*}
The pressure is approximated by discontinuous piecewise polynomials:
\[
\mathcal Q_h
:=
\{q_h\in L^2(\Omega):
q_h|_T\in\mathbb P_k(T)
\quad\forall\,T\in\mathcal T_h\},
\qquad
\mathring{\mathcal Q}_h
:=
\mathcal Q_h\cap L_0^2(\Omega).
\]

\subsubsection{Interpolation and projections}

For each \(T\in\mathcal T_h\), let
\(I_{k+1,T}^{\rm div}:H^1(T;\mathbb R^d)\to \mathbb P_{k+1}(T;\mathbb R^d)\)
denote the local interpolation operator defined by the DoFs in
\eqref{eq:HdivDoF}. The corresponding global interpolation operator
\(I_{k+1,h}^{\div}:H^1(\mathcal T_h;\mathbb R^d)\to
L^2(\Omega;\mathbb R^d)\) is defined by
\[
(I_{k+1,h}^{\div}\boldsymbol v)|_T
:=
I_{k+1,T}^{\rm div}(\boldsymbol v|_T),
\qquad T\in\mathcal T_h.
\]
Then \(I_{k+1,h}^{\div}\boldsymbol v\in\mathbb V_h^{\div}\) for any
\(\boldsymbol v\in H^1(\Omega;\mathbb R^d)\). Moreover, for
\(\boldsymbol v\in H^m(T;\mathbb R^d)\), \(1\le m\le k+2\),
\begin{equation}\label{eq:Ihdivprop2}
	\|\boldsymbol v-I_{k+1,T}^{\rm div}\boldsymbol v\|_{0,T}
	+
	h_T|\boldsymbol v-I_{k+1,T}^{\rm div}\boldsymbol v|_{1,T}
	\lesssim
	h_T^m|\boldsymbol v|_{m,T}.
\end{equation}

The interpolation operator \(I_{k+1,h}^{\div}\) satisfies the commuting property
\cite[Proposition~2.5.2]{Boffi2013MixedFE}
\begin{equation}\label{eq:divcommutative}
	\div(I_{k+1,h}^{\div}\boldsymbol v)
	=
	Q_{k,h}(\div\boldsymbol v)
	\qquad
	\forall\,\boldsymbol v\in H^1(\Omega;\mathbb R^d).
\end{equation}
For \(\boldsymbol v\in H^1(\mathcal T_h;\mathbb R^d)\), define
\[
|\boldsymbol v|_{1,h}^2
:=
\|\dev\boldsymbol{\varepsilon}_h(\boldsymbol v)\|_0^2
+\|\div_h\boldsymbol v\|_0^2
+\sum_{F\in\mathcal F_h}
h_F^{-1}\|[\![\Pi_F\boldsymbol v]\!]\|_{0,F}^2.
\]
Together with the stable surjection
\(\div H_0^1(\Omega;\mathbb R^d)=L_0^2(\Omega)\),
\eqref{eq:divcommutative} gives
\[
\div\mathring{\mathbb V}_h^{\div}=\mathring{\mathcal Q}_h.
\]
Taking the BDM interpolant of a stable continuous divergence lifting and
using the standard local stability and trace estimates, we obtain, for
every \(q_h\in\mathring{\mathcal Q}_h\), a function
\(\boldsymbol w_h\in\mathring{\mathbb V}_h^{\div}\) such that
\begin{equation}\label{eq:BDMdivsurjection}
	\div\boldsymbol w_h=q_h,
	\qquad
	|\boldsymbol w_h|_{1,h}\lesssim\|q_h\|_0.
\end{equation}

Let
\(I_T^{tn}:H^1(T;\mathbb S\cap\mathbb T)\to
\Sigma_k^+(T;\mathbb S\cap\mathbb T)\)
denote the interpolation operator associated with the DoFs
\eqref{eq:tndofTS} for \(k\ge1\), and with the DoFs
\eqref{eq:tndofTS0} for \(k=0\). The corresponding global interpolation operator
\(I_h^{tn}:H^1(\mathcal T_h;\mathbb S\cap\mathbb T)\to
L^2(\Omega;\mathbb T)\) is defined by
$
(I_h^{tn}\boldsymbol\tau)|_T
:=
I_T^{tn}(\boldsymbol\tau|_T)$ for $T\in\mathcal T_h$.
For \(\boldsymbol\tau\in H^1(\Omega;\mathbb S\cap\mathbb T)\), the
tangential--normal facet moments are single-valued, and hence
\(I_h^{tn}\boldsymbol\tau\in\Sigma_h^{tn}\).

For \(\boldsymbol\tau\in L^2(T;\mathbb T)\), let
\(Q_T^\Sigma\boldsymbol\tau\in
\Sigma_k^+(T;\mathbb S\cap\mathbb T)\) be the \(L^2(T)\)-orthogonal
projection determined by
\[
(Q_T^\Sigma\boldsymbol\tau,\boldsymbol\eta_h)_T
=
(\boldsymbol\tau,\boldsymbol\eta_h)_T
\qquad
\forall\,\boldsymbol\eta_h\in
\Sigma_k^+(T;\mathbb S\cap\mathbb T).
\]
The corresponding elementwise projection is defined by
\((Q_h^\Sigma\boldsymbol\tau)|_T
:=Q_T^\Sigma(\boldsymbol\tau|_T)\) for
\(\boldsymbol\tau\in L^2(\Omega;\mathbb T)\).

\begin{lemma}\label{le:Ih-error}
	For \(1\le s\le k+1\) and
	\(\boldsymbol\tau\in H^s(T;\mathbb S\cap\mathbb T)\),
	\begin{equation}\label{eq:Itn-error}
		\|\boldsymbol\tau-I_T^{tn}\boldsymbol\tau\|_{0,T}
		+h_T|\boldsymbol\tau-I_T^{tn}\boldsymbol\tau|_{1,T}
		\lesssim
		h_T^s|\boldsymbol\tau|_{s,T}.
	\end{equation}
\end{lemma}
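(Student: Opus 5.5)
The proof follows the standard Bramble--Hilbert argument: polynomial invariance of \(I_T^{tn}\), together with its uniform \(H^1\)-stability after scaling to a reference configuration.

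\emph{Polynomial invariance.} First show that \(I_T^{tn}\boldsymbol p=\boldsymbol p\) for all \(\boldsymbol p\in\mathbb P_k(T;\mathbb S\cap\mathbb T)\). By \eqref{eq:Qkt2ST}, \(\mathbb P_k(T;\mathbb S\cap\mathbb T)\subset\Sigma_k^+(T;\mathbb S\cap\mathbb T)\). Since \(I_T^{tn}\) is defined by unisolvent DoFs (Lemma~\ref{le:unisolventTS} for \(k\ge1\), Lemma~\ref{le:unisolventTS0} for \(k=0\)), it reproduces every element of the local space, and in particular these polynomials.

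\emph{Stability.} Next establish
\[
\|I_T^{tn}\boldsymbol\tau\|_{0,T}+h_T|I_T^{tn}\boldsymbol\tau|_{1,T}
\lesssim\|\boldsymbol\tau\|_{0,T}+h_T|\boldsymbol\tau|_{1,T}
\qquad\forall\,\boldsymbol\tau\in H^1(T;\mathbb S\cap\mathbb T).
\]
Here I would not use a Piola-type pullback, because tangential--normal traces and the \(\dev\) structure are not preserved by general affine maps. Instead, use the local norm equivalence \eqref{eq:local_stress_norm} directly on \(T\). The DoFs determine \(Q_{k,T}I_T^{tn}\boldsymbol\tau\) and \(Q_F^t\Pi_F(I_T^{tn}\boldsymbol\tau\,\boldsymbol n)\). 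Indeed, the cell moments against \(\mathbb P_k(T;\mathbb S\cap\mathbb T)\) together with weak symmetry fix all moments against \(\mathbb P_k(T;\mathbb T)\), so
\[
Q_{k,T}I_T^{tn}\boldsymbol\tau=Q_{k,T}\boldsymbol\tau ,
\]
where we use that \(\boldsymbol\tau\) is symmetric, hence has vanishing \(\mathbb K\)-moments. Similarly, \(\Pi_F(I_T^{tn}\boldsymbol\tau\,\boldsymbol n)\) lies in \(\mathcal R_k^t(F)\) and equals \(Q_F^t\Pi_F(\boldsymbol\tau\boldsymbol n)\). Hence
\[
\|I_T^{tn}\boldsymbol\tau\|_{0,T}^2\lesssim\|\boldsymbol\tau\|_{0,T}^2+\sum_{F\in\mathcal F(T)}h_F\|\boldsymbol\tau\|_{0,F}^2\lesssim\|\boldsymbol\tau\|_{0,T}^2+h_T^2|\boldsymbol\tau|_{1,T}^2
\]
by the scaled trace inequality. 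The \(H^1\)-seminorm part of the stability bound then follows from the inverse inequality on the finite-dimensional space \(\Sigma_k^+(T;\mathbb S\cap\mathbb T)\subset\mathbb P_{k+1}(T;\mathbb T)\).

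\emph{Conclusion.} For \(\boldsymbol p\in\mathbb P_{s-1}(T;\mathbb S\cap\mathbb T)\subset\mathbb P_k(T;\mathbb S\cap\mathbb T)\), which holds since \(s\le k+1\), invariance gives \(\boldsymbol\tau-I_T^{tn}\boldsymbol\tau=(\boldsymbol\tau-\boldsymbol p)-I_T^{tn}(\boldsymbol\tau-\boldsymbol p)\). Applying the stability bound, it remains to choose \(\boldsymbol p\) via the Bramble--Hilbert lemma (averaged Taylor polynomial) with
\[
\|\boldsymbol\tau-\boldsymbol p\|_{0,T}+h_T|\boldsymbol\tau-\boldsymbol p|_{1,T}\lesssim h_T^s|\boldsymbol\tau|_{s,T}.
\]
The constants depend only on shape regularity.

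\emph{Main obstacle.} The delicate step is ensuring \(\boldsymbol p\) is symmetric and traceless, so that invariance applies. I would take the componentwise averaged Taylor polynomial, which commutes with the linear maps \(\sym\) and \(\dev\) and is therefore valued in \(\mathbb S\cap\mathbb T\). Alternatively, take any polynomial approximant and apply \(\dev\sym\) to it, which is a pointwise contraction preserving \(\boldsymbol\tau\). The remaining checks are that the constant in \eqref{eq:local_stress_norm} is uniform, which the paper already asserts, and that for \(k=0\) the RM-facet projection is bounded in \(L^2(F)\), which is immediate.
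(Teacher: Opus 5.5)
Your proposal is correct and matches the paper's argument: the paper's one-line proof invokes the same ingredients, namely the local norm equivalence \eqref{eq:local_stress_norm}, polynomial approximation, and scaling and inverse estimates. Your write-up makes explicit the steps the paper leaves implicit, including the identities $Q_{k,T}I_T^{tn}\boldsymbol\tau=Q_{k,T}\boldsymbol\tau$ and $\Pi_F(I_T^{tn}\boldsymbol\tau\,\boldsymbol n)=Q_F^t\Pi_F(\boldsymbol\tau\boldsymbol n)$, and the choice of an $\mathbb S\cap\mathbb T$-valued approximant so that polynomial invariance applies.
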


\begin{proof}
It follows from the norm equivalence \eqref{eq:local_stress_norm},
polynomial approximation, and standard scaling and inverse estimates.
\end{proof}

\subsection{Distributional mixed finite element method and stability}

To motivate the discrete bilinear form, let
\(\boldsymbol\tau\in H^1(\mathcal T_h;\mathbb M)\) satisfy
\([\![\Pi_F\boldsymbol\tau\boldsymbol n_F]\!]=\boldsymbol0\) on every
\(F\in\mathring{\mathcal F}_h\). Elementwise integration by parts and
tangential--normal continuity give
\begin{equation*}
\begin{aligned}
\langle\div\boldsymbol\tau,\boldsymbol v\rangle
&=\sum_{T\in\mathcal T_h}(\div\boldsymbol\tau,\boldsymbol v)_T
-\sum_{T\in\mathcal T_h}
(\boldsymbol n^{\intercal}\boldsymbol\tau\boldsymbol n,
\boldsymbol v\cdot\boldsymbol n)_{\partial T}
\end{aligned}
\qquad
\forall\,\boldsymbol v\in C_0^\infty(\Omega;\mathbb R^d).
\end{equation*}
Motivated by this identity and the continuous formulation
\eqref{eq:BrinkmanMixed}, we define the distributional mixed finite element
method as follows: find
\(
(\boldsymbol\sigma_h,\boldsymbol u_h,p_h)
\in\Sigma_h^{tn}\times\mathring{\mathbb V}_h^{\div}
\times\mathring{\mathcal Q}_h
\)
such that
\begin{subequations}\label{eq:BrinkmanDiscrete}
\begin{align}
\nu^{-1}(\boldsymbol\sigma_h,\boldsymbol\tau_h)
+b_h(\boldsymbol\tau_h,q_h;\boldsymbol u_h)&=0,
&&\forall\,\boldsymbol\tau_h\in\Sigma_h^{tn},\;
q_h\in\mathring{\mathcal Q}_h,
\label{eq:BrinkmanDiscrete1}\\
b_h(\boldsymbol\sigma_h,p_h;\boldsymbol v_h)
-(\boldsymbol u_h,\boldsymbol v_h)
&=-(\boldsymbol f,\boldsymbol v_h),
&&\forall\,\boldsymbol v_h\in\mathring{\mathbb V}_h^{\div}.
\label{eq:BrinkmanDiscrete2}
\end{align}
\end{subequations}
Here the bilinear form \(b_h(\cdot,\cdot;\cdot)\) is defined by
\[
\begin{aligned}
	b_h(\boldsymbol\tau_h,q_h;\boldsymbol v_h)
	:&=
	\sum_{T\in\mathcal T_h}
	\big[
	(\div\boldsymbol\tau_h,\boldsymbol v_h)_T
	-
	(\boldsymbol n^{\intercal}
	\boldsymbol\tau_h\boldsymbol n,
	\boldsymbol v_h\cdot\boldsymbol n)_{\partial T}
	\big]
	-(\div\boldsymbol v_h,q_h) \\
	&=
	\sum_{T\in\mathcal T_h}
	\big[
	-(\boldsymbol\tau_h,\dev\grad\boldsymbol v_h)_T
	+
	(\Pi_F\boldsymbol\tau_h\boldsymbol n,
	\Pi_F\boldsymbol v_h)_{\partial T}
	\big] \\
	&\quad
	-(\div\boldsymbol v_h,q_h).
\end{aligned}
\]
The second identity uses pointwise tracelessness.
Equivalently, the fixed facet orientations and the single-valued
tangential--normal stress moments give
\[
\begin{aligned}
	b_h(\boldsymbol\tau_h,q_h;\boldsymbol v_h)
	={}&
	-
	(\boldsymbol\tau_h,\dev\grad_h\boldsymbol v_h)
	+\sum_{F\in\mathcal F_h}
	(\Pi_F\boldsymbol\tau_h\boldsymbol n_F,
	[\![\Pi_F\boldsymbol v_h]\!])_F -(\div\boldsymbol v_h,q_h).
\end{aligned}
\]

For \(\boldsymbol\tau\in H^1(\mathcal T_h;\mathbb T)\), define
\[
\|\boldsymbol\tau\|_{0,h}^2
:=
\|\boldsymbol\tau\|_0^2
+\sum_{T\in\mathcal T_h}\sum_{F\in\mathcal F(T)}
h_F\bigl\|\Pi_F\bigl((\boldsymbol\tau|_T)
\boldsymbol n_{\partial T}\bigr)\bigr\|_{0,F}^2.
\]

Summing \eqref{eq:local_stress_norm} and using the inverse trace inequality
gives
\[
\|\boldsymbol\tau_h\|_{0,h}\eqsim\|\boldsymbol\tau_h\|_0
\qquad
\forall\,\boldsymbol\tau_h\in\Sigma_h^{tn}.
\]
The weak-symmetry property and the Cauchy--Schwarz inequality give
\[
|b_h(\boldsymbol\tau_h,q_h;\boldsymbol v_h)|
\lesssim
\bigl(\|\boldsymbol\tau_h\|_{0,h}+\|q_h\|_0\bigr)
|\boldsymbol v_h|_{1,h}.
\]

\begin{lemma}\label{lem:facet_coupling_korn}
	Every \(\boldsymbol v\in H^1(\mathcal T_h;\mathbb R^d)\cap H_0(\div,\Omega)\) satisfies
	\begin{equation}\label{eq:brokenkorn}
	\begin{aligned}
		\|\grad_h\boldsymbol v\|_0^2
		+\sum_{F\in\mathcal F_h}h_F^{-1}
		\|[\![\boldsymbol v]\!]\|_{0,F}^2
		&\eqsim
		\|\dev\boldsymbol{\varepsilon}_h(\boldsymbol v)\|_0^2
		+\|\div\boldsymbol v\|_0^2 \\
		&\quad
		+\sum_{F\in\mathcal F_h}h_F^{-1}
		\|Q_{{\rm RM},F}[\![\Pi_F\boldsymbol v]\!]\|_{0,F}^2.
	\end{aligned}
	\end{equation}
\end{lemma}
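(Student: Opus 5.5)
The easy direction ($\gtrsim$) is pointwise; the substantive direction ($\lesssim$) I would reduce to the known broken Korn inequality via a compactness/rigid-motion argument on facets.

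\emph{Upper bound of the right-hand side.} Pointwise,
\[
|\dev\boldsymbol\varepsilon_h(\boldsymbol v)|\le|\grad_h\boldsymbol v|,
\qquad
|\div_h\boldsymbol v|\le\sqrt d\,|\grad_h\boldsymbol v|,
\]
and
\[
\|Q_{{\rm RM},F}[\![\Pi_F\boldsymbol v]\!]\|_{0,F}\le\|[\![\boldsymbol v]\!]\|_{0,F}.
\]
Since $\boldsymbol v\in H_0(\div,\Omega)$, the normal jumps vanish on interior facets and $\boldsymbol v\cdot\boldsymbol n=0$ on boundary facets. Hence $[\![\boldsymbol v]\!]=[\![\Pi_F\boldsymbol v]\!]$ on every facet, which gives the easy direction.

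\emph{Lower bound: first step.} I would invoke Brenner's piecewise Korn inequality for $H^1(\mathcal T_h)$ fields. It bounds $\|\grad_h\boldsymbol v\|_0^2$ by
\[
\|\boldsymbol\varepsilon_h(\boldsymbol v)\|_0^2+\sum_F h_F^{-1}\|Q_{1,F}[\![\boldsymbol v]\!]\|_{0,F}^2,
\]
with a boundary term that fixes the rigid motions globally. Since $\boldsymbol\varepsilon=\dev\boldsymbol\varepsilon+d^{-1}(\div\boldsymbol v)\boldsymbol I$, the first term is controlled by $\|\dev\boldsymbol\varepsilon_h(\boldsymbol v)\|_0^2+\|\div\boldsymbol v\|_0^2$.

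\emph{Lower bound: reduction of the jump terms.} The full jump $\|[\![\boldsymbol v]\!]\|$ must also be controlled, and only the RM-projected tangential jump is available. I would proceed elementwise.
\begin{itemize}
\item On each $T$, let $\boldsymbol r_T\in{\rm RM}(T)$ be the $H^1$-projection of $\boldsymbol v$ onto rigid motions.
\item By the local Korn inequality,
\[
\|\grad(\boldsymbol v-\boldsymbol r_T)\|_{0,T}+h_T^{-1}\|\boldsymbol v-\boldsymbol r_T\|_{0,T}\lesssim\|\boldsymbol\varepsilon(\boldsymbol v)\|_{0,T}.
\]
\item By the trace inequality,
\[
h_F^{-1/2}\|\boldsymbol v-\boldsymbol r_T\|_{0,F}\lesssim\|\boldsymbol\varepsilon(\boldsymbol v)\|_{0,T}.
\]
\item Hence $[\![\boldsymbol v]\!]=[\![\boldsymbol r]\!]+O(\text{Korn-controlled})$, and the problem reduces to bounding the piecewise rigid motion $\boldsymbol r=(\boldsymbol r_T)$.
\end{itemize}
For $\boldsymbol r$, the tangential jump $[\![\Pi_F\boldsymbol r]\!]$ lies in ${\rm RM}(F)$, since tangential traces of rigid motions are facet rigid motions. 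Therefore
\[
[\![\Pi_F\boldsymbol r]\!]=Q_{{\rm RM},F}[\![\Pi_F\boldsymbol r]\!],
\]
and this in turn equals $Q_{{\rm RM},F}[\![\Pi_F\boldsymbol v]\!]$ up to the Korn-controlled remainder. The normal jump of $\boldsymbol r$ equals the normal jump of $\boldsymbol r-\boldsymbol v$, which is again controlled. Thus all the quantities
\[
h_F^{-1/2}\|[\![\boldsymbol v]\!]\|_{0,F}
\]
are bounded by the right-hand side. Inserting this into Brenner's inequality, together with $\|Q_{1,F}[\![\boldsymbol v]\!]\|\le\|[\![\boldsymbol v]\!]\|$, closes the estimate.

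\emph{Main obstacle.} The delicate point is the global, non-local one: Brenner's inequality needs a term excluding global rigid motions. I would supply it from the boundary facets, where $[\![\boldsymbol v]\!]=\boldsymbol v$ has been fully controlled by the argument above; the connectedness of $\Omega$ does the rest. An alternative is a scaling-plus-compactness argument on patches, showing that a piecewise rigid motion with vanishing full jumps and vanishing boundary trace is zero. The case $d=2$ needs care, since ${\rm RM}(F)=\mathbb P_0(F)$ there. It still captures tangential traces of 2D rigid motions, because the rotation $\boldsymbol A\boldsymbol x$ restricted to an edge has a constant tangential component. So the argument is uniform in $d$, and the explicit RM enrichment from Section~\ref{sec:weaklysymmtrictracelesstnelements} is exactly what makes $Q_{{\rm RM},F}$ suffice when $d\ge3$.
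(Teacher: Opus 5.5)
Your proof is correct, but it follows a different route from the paper's. The paper does not carry out the jump reduction itself. It notes that \([\![\boldsymbol v]\!]=[\![\Pi_F\boldsymbol v]\!]\) is purely tangential, so projecting the jump onto traces of rigid motions gives exactly \(Q_{{\rm RM},F}[\![\Pi_F\boldsymbol v]\!]\). It then invokes a \emph{projected} discrete Korn inequality from \cite{ChenHuangWangZhang2026,ChenHuHuang2017}, in which the jumps are already tested only against rigid-motion traces, and the splitting \(\|\boldsymbol\varepsilon_h(\boldsymbol v)\|_0^2=\|\dev\boldsymbol\varepsilon_h(\boldsymbol v)\|_0^2+d^{-1}\|\div\boldsymbol v\|_0^2\) finishes.

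You instead derive the projected statement from the unprojected one. Elementwise rigid-motion approximation, local Korn, and the trace inequality, combined with \(\Pi_F{\rm RM}(T)\subset{\rm RM}(F)\) and the vanishing normal jump, give on every facet (boundary facets included)
\[
h_F^{-1/2}\|[\![\boldsymbol v]\!]\|_{0,F}\lesssim h_F^{-1/2}\|Q_{{\rm RM},F}[\![\Pi_F\boldsymbol v]\!]\|_{0,F}+\sum_{T\supset F}\|\boldsymbol\varepsilon(\boldsymbol v)\|_{0,T}.
\]
Brenner's classical piecewise Korn inequality then closes the estimate.

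Your route is longer, but it is self-contained modulo standard tools and visibly uniform in \(d\). It also yields a facet-local jump bound that is stronger than what the lemma records. The paper's route is a two-line appeal to a less standard result.

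Two small remarks. First, the ``main obstacle'' you flag does not actually arise. The jump sum already contains the boundary facets with \([\![\boldsymbol v]\!]=\boldsymbol v\), so the Dirichlet-type version of Brenner's inequality applies without a separate functional fixing rigid motions. Second, your closing remark about the stress enrichment is beside the point here, since the lemma concerns velocity fields only. The \({\rm RM}(F)\) enrichment becomes relevant in Theorem~\ref{thm:discrete_infsup}, where the stress must reproduce \(Q_{{\rm RM},F}\) of the jumps.
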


\begin{proof}
	Since \(\boldsymbol v\in H_0(\div,\Omega)\), its jump has vanishing
	normal component on every facet, and hence
	\([\![\boldsymbol v]\!]=[\![\Pi_F\boldsymbol v]\!]\).
	The tangential traces of elementwise rigid motions belong to
	\({\rm RM}(F)\). Since the jumps are purely tangential, their moments
	against rigid-motion traces are determined by
	\(Q_{{\rm RM},F}[\![\Pi_F\boldsymbol v]\!]\).
	Hence the projected discrete Korn inequality
	(cf. \cite[Lemma~2.10]{ChenHuangWangZhang2026} and
	\cite[Lemma~3.2]{ChenHuHuang2017}) gives
	\[
	\|\grad_h\boldsymbol v\|_0^2
	+\sum_{F\in\mathcal F_h}h_F^{-1}
	\|[\![\boldsymbol v]\!]\|_{0,F}^2
	\eqsim
	\|\boldsymbol{\varepsilon}_h(\boldsymbol v)\|_0^2
	+\sum_{F\in\mathcal F_h}
	h_F^{-1}\|Q_{{\rm RM},F}[\![\Pi_F\boldsymbol v]\!]\|_{0,F}^2.
	\]
	Since $\|\boldsymbol{\varepsilon}_h(\boldsymbol v)\|_0^2
	=
	\|\dev\boldsymbol{\varepsilon}_h(\boldsymbol v)\|_0^2
	+\frac1d\|\div\boldsymbol v\|_0^2$,
	the equivalence \eqref{eq:brokenkorn} follows.
\end{proof}

The broken Poincar\'e inequality and \eqref{eq:brokenkorn} imply
\begin{equation}\label{eq:discrete_poincare}
	\|\boldsymbol v\|_0
	\lesssim
	|\boldsymbol v|_{1,h}
	\qquad
	\forall\,\boldsymbol v\in
	H^1(\mathcal T_h;\mathbb R^d)\cap H_0(\div,\Omega).
\end{equation}

\begin{theorem}\label{thm:discrete_infsup}
For every \(\boldsymbol v_h\in\mathring{\mathbb V}_h^{\div}\),
	\begin{equation}\label{eq:discreteinfsup}
		|\boldsymbol v_h|_{1,h}
		\lesssim
		\sup_{(\boldsymbol\tau_h,q_h)\in
	\Sigma_h^{tn}\times\mathring{\mathcal Q}_h, (\boldsymbol\tau_h,q_h)\ne(0,0)}
		\frac{b_h(\boldsymbol\tau_h, q_h; \boldsymbol v_h)}
		{\|\boldsymbol\tau_h\|_{0,h} + \|q_h\|_0}.
	\end{equation}
\end{theorem}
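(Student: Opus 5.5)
The plan is to handle the divergence part and the deviatoric/jump part separately, and then combine them by a triangle-type argument. Fix \(\boldsymbol v_h\in\mathring{\mathbb V}_h^{\div}\) and write \(S\) for the supremum on the right of \eqref{eq:discreteinfsup}.

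\textbf{Step 1: the divergence.} Since \(\div\boldsymbol v_h\in\mathring{\mathcal Q}_h\), take \(\boldsymbol\tau_h=\boldsymbol0\) and \(q_h=-\div\boldsymbol v_h\). This gives \(b_h=\|\div\boldsymbol v_h\|_0^2\), hence \(\|\div\boldsymbol v_h\|_0\le S\).

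\textbf{Step 2: the deviatoric and jump part.} Set \(q_h=0\) and build \(\boldsymbol\tau_h\in\Sigma_h^{tn}\) elementwise from the DoFs.

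On each \(T\), the cell DoFs \eqref{eq:tndofTS2} (resp. \eqref{eq:tndofTS03}) are set to \(-(\dev\boldsymbol\varepsilon(\boldsymbol v_h),\boldsymbol q)_T\) for \(\boldsymbol q\in\mathbb P_k(T;\mathbb S\cap\mathbb T)\). This is meaningful because \(\dev\grad\boldsymbol v_h\in\mathbb P_k(T;\mathbb T)\).

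On each facet \(F\), the single-valued tangential--normal DoFs \eqref{eq:tndofTS1} (together with \eqref{eq:tndofTS02} when \(k=0\)) are set so that \(\Pi_F\boldsymbol\tau_h\boldsymbol n_F=h_F^{-1}Q_F^t[\![\Pi_F\boldsymbol v_h]\!]\). For \(k\ge1\), \(Q_F^t\) is the identity on \([\![\Pi_F\boldsymbol v_h]\!]\in\mathbb P_{k+1}\) only up to degree \(k\), so this is genuinely a projection; for \(k=0\) it is \(Q_{{\rm RM},F}\).

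Unisolvence (Lemmas~\ref{le:unisolventTS} and~\ref{le:unisolventTS0}) makes \(\boldsymbol\tau_h\) well defined in \(\Sigma_h^{tn}\).

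Next evaluate \(b_h\). Weak symmetry, i.e.\ \eqref{eq:Qkt2ST}, gives \(Q_{k,T}\boldsymbol\tau_h\in\mathbb P_k(T;\mathbb S\cap\mathbb T)\). Since \(\dev\grad\boldsymbol v_h\) has degree \(k\), this yields
\[
-(\boldsymbol\tau_h,\dev\grad_h\boldsymbol v_h)=-(Q_{k,h}\boldsymbol\tau_h,\dev\boldsymbol\varepsilon_h(\boldsymbol v_h))=\|\dev\boldsymbol\varepsilon_h(\boldsymbol v_h)\|_0^2 .
\]
Here \(\dev\boldsymbol\varepsilon_h(\boldsymbol v_h)\in\mathbb P_k(\mathcal T_h;\mathbb S\cap\mathbb T)\), so the cell DoFs determine \(Q_{k,h}\boldsymbol\tau_h=-\dev\boldsymbol\varepsilon_h(\boldsymbol v_h)\). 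For the facet term, \(\Pi_F\boldsymbol\tau_h\boldsymbol n_F\in\mathcal R_k^t(F)\), so
\[
(\Pi_F\boldsymbol\tau_h\boldsymbol n_F,[\![\Pi_F\boldsymbol v_h]\!])_F=h_F^{-1}\|Q_F^t[\![\Pi_F\boldsymbol v_h]\!]\|_{0,F}^2 .
\]
Finally, the norm equivalence \eqref{eq:local_stress_norm} summed over elements gives
\[
\|\boldsymbol\tau_h\|_{0,h}^2\lesssim\|\dev\boldsymbol\varepsilon_h(\boldsymbol v_h)\|_0^2+\sum_F h_F^{-1}\|Q_F^t[\![\Pi_F\boldsymbol v_h]\!]\|_{0,F}^2=:X^2 .
\]
Hence \(X^2=b_h(\boldsymbol\tau_h,0;\boldsymbol v_h)\lesssim S\,X\), and therefore \(X\lesssim S\).

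\textbf{Step 3: recovering the full jump term.} The main obstacle is that \(|\boldsymbol v_h|_{1,h}\) contains the full jump \(h_F^{-1}\|[\![\Pi_F\boldsymbol v_h]\!]\|_{0,F}^2\), whereas \(X\) controls only its projection \(Q_F^t\). Lemma~\ref{lem:facet_coupling_korn} closes this gap: since \({\rm RM}(F)\subset\mathcal R_k^t(F)\) in all cases, we have \(\|Q_{{\rm RM},F}w\|\le\|Q_F^tw\|\). Therefore \eqref{eq:brokenkorn} bounds the full broken jump term, and \(\|\grad_h\boldsymbol v_h\|_0\), by \(X^2+\|\div\boldsymbol v_h\|_0^2\). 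This is exactly where the \({\rm RM}(F)\) enrichment for \(k=0\), \(d\ge3\) is needed.

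\textbf{Conclusion.} Combining Steps 1 to 3 gives \(|\boldsymbol v_h|_{1,h}\lesssim X+\|\div\boldsymbol v_h\|_0\lesssim S\).
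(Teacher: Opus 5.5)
Your argument is correct. Its core is the same as in the paper: the test stress with cell moments $-\dev\boldsymbol\varepsilon_h(\cdot)$ and facet traces $h_F^{-1}Q_F^t[\![\Pi_F\,\cdot\,]\!]$, evaluated through weak symmetry \eqref{eq:Qkt2ST} and bounded through \eqref{eq:local_stress_norm}. The difference lies only in how the divergence is handled.

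The paper first splits $\boldsymbol v_h=\boldsymbol z_h+\boldsymbol w_h$ using the discrete lifting \eqref{eq:BDMdivsurjection}, so that $\div\boldsymbol z_h=0$. It builds $\boldsymbol\tau_h$ from $\boldsymbol z_h$ and obtains the two-sided equivalence $\|\boldsymbol\tau_h\|_{0,h}^2\eqsim b_h(\boldsymbol\tau_h,0;\boldsymbol z_h)\eqsim|\boldsymbol z_h|_{1,h}^2$. It then returns to $\boldsymbol v_h$ through the continuity of $b_h$ and the bound $|\boldsymbol w_h|_{1,h}\lesssim\|\div\boldsymbol v_h\|_0$.

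You instead build $\boldsymbol\tau_h$ directly from $\boldsymbol v_h$ and need only the one-sided bound $\|\boldsymbol\tau_h\|_{0,h}\lesssim X$. You then let \eqref{eq:brokenkorn} absorb the divergence, which Step~1 controls. This works because that lemma is valid on all of $H^1(\mathcal T_h;\mathbb R^d)\cap H_0(\div,\Omega)$ and already carries $\|\div\boldsymbol v\|_0^2$ on its right-hand side.

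Your route is shorter and does not use \eqref{eq:BDMdivsurjection} for this theorem; that lifting is still needed later for the pressure bound in Theorem~\ref{thm:BrinkmanDiscrete}. The paper's version follows the familiar kernel-plus-lifting template and exhibits, on $\mathring{\mathbb V}_h^{\div}\cap\ker(\div)$, a test stress whose norm is equivalent to $|\boldsymbol z_h|_{1,h}$ from both sides. For divergence-free $\boldsymbol v_h$, your argument gives the same kernel estimate directly.
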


\begin{proof}
Let \(\boldsymbol v_h\in\mathring{\mathbb V}_h^{\div}\).
Applying \eqref{eq:BDMdivsurjection} to \(\div\boldsymbol v_h\) gives
\(\boldsymbol w_h\in\mathring{\mathbb V}_h^{\div}\) such that
	\[
		\div\boldsymbol w_h=\div\boldsymbol v_h,
		\qquad
		|\boldsymbol w_h|_{1,h}\lesssim \|\div\boldsymbol v_h\|_0.
	\]
	Then
	\(\boldsymbol z_h:=\boldsymbol v_h-\boldsymbol w_h
	\in\mathring{\mathbb V}_h^{\div}\cap\ker(\div)\).

	Choose \(\boldsymbol\tau_h\in\Sigma_h^{tn}\) by
	\[
	\begin{aligned}
		(\Pi_F\boldsymbol\tau_h\boldsymbol n_F,\boldsymbol q)_F
		&=
		h_F^{-1}([\![\Pi_F\boldsymbol z_h]\!],\boldsymbol q)_F,
		&&\forall\,\boldsymbol q\in \mathcal R_k^t(F),\quad F\in\mathcal F_h,\\
		(\boldsymbol\tau_h,\boldsymbol q)_T
		&=
		-(\dev\boldsymbol{\varepsilon}_h(\boldsymbol z_h),\boldsymbol q)_T,
		&&\forall\,\boldsymbol q\in\mathbb P_k(T;\mathbb S\cap\mathbb T),
		\quad T\in\mathcal T_h.
	\end{aligned}
	\]
Local unisolvence and the single-valued facet prescription ensure that
these conditions uniquely determine \(\boldsymbol\tau_h\in\Sigma_h^{tn}\).
By \eqref{eq:Qkt2ST}, the prescribed moments give, on each
\(T\in\mathcal T_h\),
\[
Q_{k,T}(\boldsymbol\tau_h|_T)
=-(\dev\boldsymbol{\varepsilon}_h(\boldsymbol z_h))|_T,
\qquad
Q_F^t(\Pi_F\boldsymbol\tau_h\boldsymbol n_F)
=h_F^{-1}Q_F^t[\![\Pi_F\boldsymbol z_h]\!].
\]
The local norm equivalence \eqref{eq:local_stress_norm}, the prescribed
moments, and \(\div\boldsymbol z_h=0\) yield
\[
\|\boldsymbol\tau_h\|_{0,h}^2
\eqsim
\|\dev\boldsymbol{\varepsilon}_h(\boldsymbol z_h)\|_0^2
+\sum_{F\in\mathcal F_h}h_F^{-1}
\|Q_F^t[\![\Pi_F\boldsymbol z_h]\!]\|_{0,F}^2
\eqsim
|\boldsymbol z_h|_{1,h}^2,
\]
where the last equivalence follows from \eqref{eq:brokenkorn},
\({\rm RM}(F)\subset\mathcal R_k^t(F)\), and the \(L^2\)-stability of
\(Q_F^t\). Moreover, since
\(\skw\grad_h\boldsymbol z_h|_T\in\mathbb P_k(T;\mathbb K)\),
the weak-symmetry constraint gives
	\[
	(\boldsymbol\tau_h,\dev\grad_h\boldsymbol z_h)
	=
	(\boldsymbol\tau_h,\dev\boldsymbol{\varepsilon}_h(\boldsymbol z_h)).
	\]
	Thus, by the definition of \(b_h\),
	\[
	b_h(\boldsymbol\tau_h,0;\boldsymbol z_h)
	= \|\dev\boldsymbol{\varepsilon}_h(\boldsymbol z_h)\|_0^2 + \sum_{F\in\mathcal F_h}h_F^{-1}
		\|Q_F^t[\![\Pi_F\boldsymbol z_h]\!]\|_{0,F}^2 \eqsim
	|\boldsymbol z_h|_{1,h}^2.
	\]
	Using \(\boldsymbol z_h=\boldsymbol v_h-\boldsymbol w_h\), the identity above,
	the Cauchy--Schwarz inequality, and the bound for \(\boldsymbol w_h\), we obtain
	\[
	|\boldsymbol v_h|_{1,h}
	\le
	|\boldsymbol z_h|_{1,h}+|\boldsymbol w_h|_{1,h}
	\lesssim
		\sup_{\boldsymbol\tau_h\in\Sigma_h^{tn}, \boldsymbol\tau_h\ne\boldsymbol0}
		\frac{b_h(\boldsymbol\tau_h,0;\boldsymbol v_h)}
	{\|\boldsymbol\tau_h\|_{0,h}}
	+
	\|\div\boldsymbol v_h\|_0.
	\]
	Since \(\div\boldsymbol v_h\in\mathring{\mathcal Q}_h\), taking
	\(q_h=-\div\boldsymbol v_h\) gives
	\[
	\|\div\boldsymbol v_h\|_0
	\le
		\sup_{q_h\in\mathring{\mathcal Q}_h, q_h\ne0}
	\frac{b_h(0,q_h;\boldsymbol v_h)}{\|q_h\|_0}.
	\]
	Combining the last two estimates proves \eqref{eq:discreteinfsup}.
\end{proof}

We equip \(\mathring{\mathbb V}_h^{\div}\) and \(\Sigma_h^{tn}\) with the
following \(\nu\)-dependent norms:
\begin{align*}
	\|\boldsymbol v_h\|_{\nu,h}^2
	&:=\nu\|\dev\boldsymbol{\varepsilon}_h(\boldsymbol v_h)\|_0^2
	+\nu\sum_{F\in\mathcal F_h}h_F^{-1}
	\|[\![\Pi_F\boldsymbol v_h]\!]\|_{0,F}^2
	+\|\div\boldsymbol v_h\|_0^2+\|\boldsymbol v_h\|_0^2,\\
	\|\boldsymbol\tau_h\|_{\nu^{-1},h}^2
	&:=\nu^{-1}\|\boldsymbol\tau_h\|_{0,h}^2.
\end{align*}

For \((\boldsymbol\sigma_h,p_h,\boldsymbol u_h)\) and
\((\boldsymbol\tau_h,q_h,\boldsymbol v_h)\) in
\(\Sigma_h^{tn}\times\mathring{\mathcal Q}_h
\times\mathring{\mathbb V}_h^{\div}\), define the bilinear form
\[
\begin{aligned}
	A_{\nu,h}(\boldsymbol\sigma_h,p_h,\boldsymbol u_h;
	\boldsymbol\tau_h,q_h,\boldsymbol v_h)
	&:=\nu^{-1}(\boldsymbol\sigma_h,\boldsymbol\tau_h)
	+b_h(\boldsymbol\tau_h,q_h;\boldsymbol u_h)\\
	&\quad
	+b_h(\boldsymbol\sigma_h,p_h;\boldsymbol v_h)
	-(\boldsymbol u_h,\boldsymbol v_h).
\end{aligned}
\]

\begin{theorem}\label{thm:BrinkmanDiscrete}
Discrete method \eqref{eq:BrinkmanDiscrete} is uniformly well posed with respect to
\(0<\nu\le1\) in the above \(\nu\)-dependent norms, and its solution
\((\boldsymbol\sigma_h,\boldsymbol u_h,p_h)\) satisfies
\begin{equation}\label{eq:BrinkmanExactDivergence}
\div\boldsymbol u_h=0.
\end{equation}
Moreover, every
\((\boldsymbol\sigma_h,p_h,\boldsymbol u_h)\in
\Sigma_h^{tn}\times\mathring{\mathcal Q}_h\times
\mathring{\mathbb V}_h^{\div}\) satisfies
\begin{equation}\label{eq:BrinkmanDiscreteStability}
\|\boldsymbol\sigma_h\|_{\nu^{-1},h}+\|p_h\|_0
+\|\boldsymbol u_h\|_{\nu,h}
\lesssim
\sup_{(\boldsymbol\tau_h,q_h,\boldsymbol v_h)\ne0}
\frac{
A_{\nu,h}(\boldsymbol\sigma_h,p_h,\boldsymbol u_h;
\boldsymbol\tau_h,q_h,\boldsymbol v_h)}
{\|\boldsymbol\tau_h\|_{\nu^{-1},h}+\|q_h\|_0
+\|\boldsymbol v_h\|_{\nu,h}},
\end{equation}
	where \((\boldsymbol\tau_h,q_h,\boldsymbol v_h)\in
	\Sigma_h^{tn}\times\mathring{\mathcal Q}_h
	\times\mathring{\mathbb V}_h^{\div}\). The hidden constant is independent
of \(h\) and \(\nu\), but may depend on \(d\), \(k\), the domain, and the mesh
shape regularity.
\end{theorem}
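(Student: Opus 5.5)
The plan is to prove the inf--sup stability \eqref{eq:BrinkmanDiscreteStability} in the spirit of the continuous argument in Theorem~\ref{thm:BrinkmanContinuous}, i.e.\ through the Zulehner-type framework \cite[Theorem~2.6]{Zulehner2011} for saddle-point problems with a penalty term. Well-posedness of \eqref{eq:BrinkmanDiscrete} then follows directly, since the system is square and finite dimensional. Exact divergence follows by taking \(\boldsymbol\tau_h=\boldsymbol0\) and \(q_h=-\div\boldsymbol u_h\in\mathring{\mathcal Q}_h\) in \eqref{eq:BrinkmanDiscrete1}: since \(\div\mathring{\mathbb V}_h^{\div}=\mathring{\mathcal Q}_h\), this gives \(\|\div\boldsymbol u_h\|_0^2=0\).

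We group the unknowns as \(x=(\boldsymbol\sigma_h,p_h)\) with norm \(\|x\|_X^2=\nu^{-1}\|\boldsymbol\sigma_h\|_{0,h}^2+\|p_h\|_0^2\), and \(y=\boldsymbol u_h\) with norm \(\|\cdot\|_{\nu,h}\). The operator \(a(x,x)=\nu^{-1}\|\boldsymbol\sigma_h\|_0^2\) is positive semidefinite, and \(c(y,y)=\|\boldsymbol u_h\|_0^2\) is positive semidefinite as well. It therefore suffices to prove two norm equivalences, uniform in \(h\) and \(\nu\).

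The first equivalence, for the stress--pressure pair, is
\[
\nu^{-1}\|\boldsymbol\tau_h\|_0^2+\sup_{\boldsymbol v_h}\frac{b_h(\boldsymbol\tau_h,q_h;\boldsymbol v_h)^2}{\|\boldsymbol v_h\|_{\nu,h}^2}\eqsim\|\boldsymbol\tau_h\|_{\nu^{-1},h}^2+\|q_h\|_0^2 .
\]
The first term already controls \(\nu^{-1}\|\boldsymbol\tau_h\|_{0,h}^2\) by the norm equivalence \(\|\cdot\|_{0,h}\eqsim\|\cdot\|_0\) on \(\Sigma_h^{tn}\). For \(q_h\) we use \eqref{eq:BDMdivsurjection}: choose \(\boldsymbol w_h\) with \(\div\boldsymbol w_h=-q_h\) and \(|\boldsymbol w_h|_{1,h}\lesssim\|q_h\|_0\). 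Then \eqref{eq:discrete_poincare} and \(\nu\le1\) give \(\|\boldsymbol w_h\|_{\nu,h}\lesssim\|q_h\|_0\), and the boundedness of \(b_h\) gives
\[
\|q_h\|_0^2\le b_h(\boldsymbol\tau_h,q_h;\boldsymbol w_h)+C\|\boldsymbol\tau_h\|_{0,h}\|q_h\|_0 .
\]
Since \(\|\boldsymbol\tau_h\|_{0,h}\le\nu^{-1/2}\|\boldsymbol\tau_h\|_{0,h}\), this yields the lower bound. For the upper bound we need the \(\nu\)-scaled continuity
\[
|b_h(\boldsymbol\tau_h,q_h;\boldsymbol v_h)|\lesssim\bigl(\|\boldsymbol\tau_h\|_{\nu^{-1},h}+\|q_h\|_0\bigr)\|\boldsymbol v_h\|_{\nu,h} .
\]
This follows by splitting \(b_h\): the stress part is bounded by \(\nu^{-1/2}\|\boldsymbol\tau_h\|_{0,h}\cdot\nu^{1/2}(\cdots)\), using weak symmetry to replace \(\dev\grad_h\) by \(\dev\boldsymbol\varepsilon_h\). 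The pressure part \((\div\boldsymbol v_h,q_h)\) is bounded by \(\|\div\boldsymbol v_h\|_0\|q_h\|_0\).

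The second equivalence, for the velocity, is
\[
\|\boldsymbol v_h\|_0^2+\sup_{(\boldsymbol\tau_h,q_h)}\frac{b_h^2}{\|\boldsymbol\tau_h\|_{\nu^{-1},h}^2+\|q_h\|_0^2}\eqsim\|\boldsymbol v_h\|_{\nu,h}^2 .
\]
This is the main obstacle, since the \(\nu\)-weights must match. The upper bound is the scaled continuity above. For the lower bound we reuse the construction in the proof of Theorem~\ref{thm:discrete_infsup}. Split \(\boldsymbol v_h=\boldsymbol z_h+\boldsymbol w_h\) with \(\div\boldsymbol w_h=\div\boldsymbol v_h\), and let \(\boldsymbol\tau_h\) be built from \(\boldsymbol z_h\); then
\[
b_h(\boldsymbol\tau_h,0;\boldsymbol z_h)\eqsim|\boldsymbol z_h|_{1,h}^2,\qquad \|\boldsymbol\tau_h\|_{0,h}\eqsim|\boldsymbol z_h|_{1,h} .
\]
Take the test function \(\nu\boldsymbol\tau_h\), whose norm is \(\|\nu\boldsymbol\tau_h\|_{\nu^{-1},h}\eqsim\nu^{1/2}|\boldsymbol z_h|_{1,h}\). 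This yields the bound \(\nu^{1/2}|\boldsymbol z_h|_{1,h}\lesssim\sup(\cdots)+\nu^{1/2}|\boldsymbol w_h|_{1,h}\). The remainder is controlled by \(\nu^{1/2}|\boldsymbol w_h|_{1,h}\lesssim\|\div\boldsymbol v_h\|_0\), and \(\|\div\boldsymbol v_h\|_0\) is in turn controlled by testing with \(q_h=-\div\boldsymbol v_h\).

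It remains to check that the jump term in \(\|\cdot\|_{\nu,h}\) is the full jump \(\|[\![\Pi_F\boldsymbol v_h]\!]\|\), whereas the construction controls only \(Q_F^t\) of it. Lemma~\ref{lem:facet_coupling_korn} closes this gap, because the full jump is bounded by the gradient-plus-projected-jump quantity, giving \(\nu\sum_F h_F^{-1}\|[\![\Pi_F\boldsymbol v_h]\!]\|_{0,F}^2\lesssim\nu|\boldsymbol v_h|_{1,h}^2\). The \(L^2\) term \(\|\boldsymbol v_h\|_0^2\) appears on the left-hand side directly. Combining the two equivalences with \cite[Theorem~2.6]{Zulehner2011} gives \eqref{eq:BrinkmanDiscreteStability} with constants independent of \(h\) and \(\nu\).
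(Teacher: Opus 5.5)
Your proof is correct, but you assemble it differently from the paper. You verify the two norm equivalences of \cite[Theorem~2.6]{Zulehner2011}, mirroring the continuous proof of Theorem~\ref{thm:BrinkmanContinuous}:
\begin{itemize}
\item the $(\boldsymbol\tau_h,q_h)$-equivalence, via the BDM divergence lifting \eqref{eq:BDMdivsurjection}, the discrete Poincar\'e inequality \eqref{eq:discrete_poincare}, and weak symmetry;
\item the $\boldsymbol v_h$-equivalence, via the $\nu$-rescaled construction from Theorem~\ref{thm:discrete_infsup} together with the test $q_h=-\div\boldsymbol v_h$.
\end{itemize}
The paper instead argues directly. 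It tests with $(\boldsymbol\sigma_h,p_h,-\boldsymbol u_h)$ to obtain $\nu^{-1}\|\boldsymbol\sigma_h\|_0^2+\|\boldsymbol u_h\|_0^2$. It then bounds $\nu^{1/2}|\boldsymbol u_h|_{1,h}$, $\|\div\boldsymbol u_h\|_0$ and $\|p_h\|_0$ by the supremum plus these already-controlled terms, and closes with Young's inequality.

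The ingredients are the same; the difference is how they are combined. Your route requires, and makes explicit, the $\nu$-scaled continuity $|b_h(\boldsymbol\tau_h,q_h;\boldsymbol v_h)|\lesssim(\|\boldsymbol\tau_h\|_{\nu^{-1},h}+\|q_h\|_0)\|\boldsymbol v_h\|_{\nu,h}$. The paper uses this bound only later, in the error analysis. Your route also gives uniform boundedness of $A_{\nu,h}$ alongside the inverse bound, and keeps the discrete analysis structurally parallel to the continuous one. The paper's route is self-contained, avoids the abstract theorem, and for the inf--sup bound needs only the unscaled continuity of $b_h$ together with $\nu\le1$.

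Your final paragraph is redundant. $|\cdot|_{1,h}$ already contains the full tangential jumps by definition. Lemma~\ref{lem:facet_coupling_korn} is already built into the equivalence $b_h(\boldsymbol\tau_h,0;\boldsymbol z_h)\eqsim|\boldsymbol z_h|_{1,h}^2$ that you import from the proof of Theorem~\ref{thm:discrete_infsup}, where $\div\boldsymbol z_h=0$ is used.
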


\begin{proof}
	Taking
	\((\boldsymbol\tau_h,q_h,\boldsymbol v_h)
	=(\boldsymbol\sigma_h,p_h,-\boldsymbol u_h)\) gives
	\[
	A_{\nu,h}(\boldsymbol\sigma_h,p_h,\boldsymbol u_h;
	\boldsymbol\sigma_h,p_h,-\boldsymbol u_h)
	=\nu^{-1}\|\boldsymbol\sigma_h\|_0^2+\|\boldsymbol u_h\|_0^2.
	\]
	Theorem~\ref{thm:discrete_infsup} and
	\eqref{eq:local_stress_norm} give
	\[
	\nu^{1/2}|\boldsymbol u_h|_{1,h}
	\lesssim
	\sup_{(\boldsymbol\tau_h,q_h)\ne(0,0)\in
	\Sigma_h^{tn}\times\mathring{\mathcal Q}_h}
	\frac{A_{\nu,h}(\boldsymbol\sigma_h,p_h,\boldsymbol u_h;
	\boldsymbol\tau_h,q_h,\boldsymbol0)}
	{\|\boldsymbol\tau_h\|_{\nu^{-1},h}+\|q_h\|_0}
	+\nu^{-1/2}\|\boldsymbol\sigma_h\|_0.
	\]
	Taking \(q_h=-\div\boldsymbol u_h\) gives
	\[
	\|\div\boldsymbol u_h\|_0
	\le
	\sup_{q_h\in\mathring{\mathcal Q}_h, q_h\ne0}
	\frac{A_{\nu,h}(\boldsymbol\sigma_h,p_h,\boldsymbol u_h;
	\boldsymbol0,q_h,\boldsymbol0)}{\|q_h\|_0}.
	\]
	The stable discrete divergence surjection and
	\eqref{eq:discrete_poincare} yield
	\[
	\|p_h\|_0
	\lesssim
	\sup_{\boldsymbol v_h\in\mathring{\mathbb V}_h^{\div}, \boldsymbol v_h\ne\boldsymbol0}
	\frac{A_{\nu,h}(\boldsymbol\sigma_h,p_h,\boldsymbol u_h;
	\boldsymbol0,0,\boldsymbol v_h)}
	{\|\boldsymbol v_h\|_{\nu,h}}
	+\nu^{-1/2}\|\boldsymbol\sigma_h\|_{0,h}
	+\|\boldsymbol u_h\|_0.
	\]
	The preceding estimates and Young's inequality prove
	\eqref{eq:BrinkmanDiscreteStability}. Since the trial and test spaces
	have the same finite dimension, well-posedness follows.
Taking \(\boldsymbol\tau_h=\boldsymbol0\) and
\(q_h=\div\boldsymbol u_h\in\mathring{\mathcal Q}_h\) in
\eqref{eq:BrinkmanDiscrete1} gives \eqref{eq:BrinkmanExactDivergence}.
\end{proof}

%

Gradient perturbations leave the discrete velocity and stress unchanged.
\begin{corollary}\label{cor:BrinkmanPressureRobust}
	Let \((\boldsymbol\sigma_h,\boldsymbol u_h,p_h)\) solve
	\eqref{eq:BrinkmanDiscrete} with right-hand side \(\boldsymbol f\). For
	\(\phi\in L_0^2(\Omega)\), let
	\((\boldsymbol\sigma_h^\phi,\boldsymbol u_h^\phi,p_h^\phi)\) solve the
	same discrete problem with the second equation replaced by
	\[
	b_h(\boldsymbol\sigma_h^\phi,p_h^\phi;\boldsymbol v_h)
	-
	(\boldsymbol u_h^\phi,\boldsymbol v_h)
	=
	-(\boldsymbol f,\boldsymbol v_h)
	+
	(\phi,\div\boldsymbol v_h)
	\quad
	\forall\,\boldsymbol v_h\in\mathring{\mathbb V}_h^{\div}.
	\]
	Then \(\boldsymbol\sigma_h^\phi=\boldsymbol\sigma_h\),
	\(\boldsymbol u_h^\phi=\boldsymbol u_h\), and
	\(p_h^\phi=p_h-Q_{k,h}\phi\).
\end{corollary}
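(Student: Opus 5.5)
The plan is to exhibit a solution of the perturbed problem in closed form and then invoke uniqueness from Theorem~\ref{thm:BrinkmanDiscrete}. The candidate is \((\boldsymbol\sigma_h,\boldsymbol u_h,p_h-Q_{k,h}\phi)\). Since \(\phi\in L_0^2(\Omega)\) and constants lie in \(\mathcal Q_h\), we have \(Q_{k,h}\phi\in\mathring{\mathcal Q}_h\). Hence the candidate belongs to \(\Sigma_h^{tn}\times\mathring{\mathbb V}_h^{\div}\times\mathring{\mathcal Q}_h\).

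First equation \eqref{eq:BrinkmanDiscrete1}. It does not involve the pressure trial variable, since \(p_h\) enters only through \(b_h(\boldsymbol\sigma_h,p_h;\cdot)\) in the second equation. It is therefore satisfied verbatim by \((\boldsymbol\sigma_h,\boldsymbol u_h)\).

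Second equation. The only term of \(b_h\) involving the pressure is \(-(\div\boldsymbol v_h,q_h)\), and \(b_h\) is linear in \(q_h\). This gives
\[
b_h(\boldsymbol\sigma_h,p_h-Q_{k,h}\phi;\boldsymbol v_h)
=b_h(\boldsymbol\sigma_h,p_h;\boldsymbol v_h)+(\div\boldsymbol v_h,Q_{k,h}\phi).
\]
The key observation is that \(\div\boldsymbol v_h\in\mathbb P_k(\mathcal T_h)\) for \(\boldsymbol v_h\in\mathring{\mathbb V}_h^{\div}\), because the velocity is piecewise \(\mathbb P_{k+1}\). The defining property of the \(L^2\) projection then gives \((\div\boldsymbol v_h,Q_{k,h}\phi)=(\div\boldsymbol v_h,\phi)\). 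Substituting the original second equation yields
\[
b_h(\boldsymbol\sigma_h,p_h-Q_{k,h}\phi;\boldsymbol v_h)-(\boldsymbol u_h,\boldsymbol v_h)=-(\boldsymbol f,\boldsymbol v_h)+(\phi,\div\boldsymbol v_h)
\]
for all \(\boldsymbol v_h\in\mathring{\mathbb V}_h^{\div}\), which is exactly the perturbed equation.

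Conclusion. The perturbed system has the same bilinear form \(A_{\nu,h}\) and only a modified right-hand side. By Theorem~\ref{thm:BrinkmanDiscrete}, its solution is unique, so it must coincide with the candidate, giving the three asserted identities.

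There is no real obstacle here. The only points needing care are checking that \(Q_{k,h}\phi\) has zero mean, so that it lies in the pressure space, and that \(\div\mathring{\mathbb V}_h^{\div}\subset\mathbb P_k(\mathcal T_h)\), which is what makes the projection exact.
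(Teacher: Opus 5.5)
Your proposal is correct and follows the paper's proof: you verify that \((\boldsymbol\sigma_h,\boldsymbol u_h,p_h-Q_{k,h}\phi)\) solves the perturbed system by using \(\div\mathring{\mathbb V}_h^{\div}\subset\mathbb P_k(\mathcal T_h)\) to replace \(\phi\) with \(Q_{k,h}\phi\), and then conclude by the uniqueness in Theorem~\ref{thm:BrinkmanDiscrete}. Your explicit checks that \(Q_{k,h}\phi\) has zero mean and that the first equation is unaffected are details the paper leaves implicit.
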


\begin{proof}
	Let \(\widehat p_h:=p_h-Q_{k,h}\phi\in\mathring{\mathcal Q}_h\).
	Since \(\div\boldsymbol v_h\in\mathring{\mathcal Q}_h\),
	\((\phi,\div\boldsymbol v_h)
	=(Q_{k,h}\phi,\div\boldsymbol v_h)\).
	Thus \((\boldsymbol\sigma_h,\boldsymbol u_h,\widehat p_h)\) solves the
	perturbed discrete problem, and uniqueness in
	Theorem~\ref{thm:BrinkmanDiscrete} proves the result.
\end{proof}

\begin{remark}[Lowest-order coupling]
	For the Stokes variant obtained by omitting the drag term, the standard
	mixed stability argument based on \eqref{eq:discreteinfsup} gives a
	well-posed method. In three dimensions, its lowest-order version and the
	minimal-facet MCS method
	\cite{GopalakrishnanKoglerLedererSchoeberl2023} both have six globally
	coupled scalar DoFs per facet. The latter employs a consistent
	vorticity--divergence stabilization, whereas the \({\rm RM}(F)\)-enriched
	stress space supplies the rotational trace control needed to close the
	discrete inf--sup argument without additional stabilization.
\end{remark}

\begin{remark}[Slip boundary conditions]\label{rem:slip_discrete}
For slip boundary conditions, replace \(\Sigma_h^{tn}\) by
\[
\mathring{\Sigma}_h^{tn}
:=
\bigl\{
\boldsymbol\tau_h\in\Sigma_h^{tn}:
\Pi_F\boldsymbol\tau_h\boldsymbol n_F=\boldsymbol0
\quad\forall\,F\in\mathcal F_h^\partial
\bigr\},
\]
while keeping \(\mathring{\mathbb V}_h^{\div}\) and
\(\mathring{\mathcal Q}_h\) unchanged and omitting the boundary-facet terms
from the velocity seminorm. The tangential traction condition is then imposed
through the stress space. The stability argument carries over using the
corresponding projected Korn inequality modulo rigid motions; the velocity
mass term controls the resulting finite-dimensional kernel.
\end{remark}

\section{Error Analysis}\label{sec:error}

We establish optimal-order error estimates in the natural norms and, by
comparison with the Darcy limit, a parameter-uniform boundary-layer estimate.
We use \(b_h\) and \(A_{\nu,h}\) also for their natural elementwise extensions
whenever well defined, and let \((\boldsymbol\sigma,\boldsymbol u,p)\) and
\((\boldsymbol\sigma_h,\boldsymbol u_h,p_h)\) denote the solutions of
\eqref{eq:BrinkmanMixed} and \eqref{eq:BrinkmanDiscrete}, respectively.

\subsection{Optimal-order error estimates}

For \(\boldsymbol w\in H^1(\Omega;\mathbb R^d)\) and
\(\boldsymbol\tau\in L^2(\Omega;\mathbb T)\), define the consistency functional
\[
\mathcal E_h(\boldsymbol w;\boldsymbol\tau)
:=
-\bigl((I-Q_{k,h})\skw\grad\boldsymbol w,
\boldsymbol\tau\bigr).
\]

\begin{lemma}\label{lem:BrinkmanMixedConsistency}
	For every
	\((\boldsymbol\tau_h,q_h,\boldsymbol v_h)\in
	\Sigma_h^{tn}\times\mathring{\mathcal Q}_h
	\times\mathring{\mathbb V}_h^{\div}\),
	\begin{equation}\label{eq:BrinkmanMixedConsistency}
		A_{\nu,h}(\boldsymbol\sigma,p,\boldsymbol u;
		\boldsymbol\tau_h,q_h,\boldsymbol v_h)
		=
		-(\boldsymbol f,\boldsymbol v_h)
		+\mathcal E_h(\boldsymbol u;\boldsymbol\tau_h).
	\end{equation}
	If \(\boldsymbol u\in H^{k+2}(\Omega;\mathbb R^d)\), then
	\begin{equation}\label{eq:BrinkmanMixedConsistencyEstimate}
		|\mathcal E_h(\boldsymbol u;\boldsymbol\tau_h)|
		\lesssim
		h^{k+1}|\boldsymbol u|_{k+2}
		\|\boldsymbol\tau_h\|_0.
	\end{equation}
\end{lemma}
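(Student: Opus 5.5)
Since $A_{\nu,h}$ splits into the two rows of \eqref{eq:BrinkmanDiscrete}, the identity \eqref{eq:BrinkmanMixedConsistency} follows from two substitutions: the exact solution goes into the first row tested with $(\boldsymbol\tau_h,q_h)$, and into the second row tested with $\boldsymbol v_h$. The required properties come from Theorem~\ref{thm:BrinkmanEquivalence}: $\boldsymbol\sigma=\nu\boldsymbol\varepsilon(\boldsymbol u)$, $\boldsymbol u\in H_0^1(\Omega;\mathbb R^d)$, and $\div\boldsymbol u=0$. Where needed, I will also assume the regularity $\boldsymbol\sigma\in H^1(\Omega;\mathbb S\cap\mathbb T)$, so that the elementwise form of $b_h$ makes sense.

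\emph{First row.} Because $\boldsymbol u$ is continuous, $[\![\Pi_F\boldsymbol u]\!]=\boldsymbol 0$ on interior facets, and it also vanishes on boundary facets by the no-slip condition. The facet-jump form of $b_h$ then gives
\[
b_h(\boldsymbol\tau_h,q_h;\boldsymbol u)=-(\boldsymbol\tau_h,\dev\grad\boldsymbol u)-(\div\boldsymbol u,q_h)=-(\boldsymbol\tau_h,\grad\boldsymbol u),
\]
where the last step uses $\div\boldsymbol u=0$ and the tracelessness of $\boldsymbol\tau_h$. Split $\grad\boldsymbol u=\boldsymbol\varepsilon(\boldsymbol u)+\skw\grad\boldsymbol u$. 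Since $\nu^{-1}(\boldsymbol\sigma,\boldsymbol\tau_h)=(\boldsymbol\varepsilon(\boldsymbol u),\boldsymbol\tau_h)$, the symmetric part cancels exactly. The skew part satisfies $(Q_{k,h}\skw\grad\boldsymbol u,\boldsymbol\tau_h)=0$, because $Q_{k,h}$ preserves $\mathbb K$ and $\boldsymbol\tau_h|_T$ is weakly symmetric by \eqref{eq:Sigma+TS} and \eqref{eq:Sigma+TS0}. What remains is $-((I-Q_{k,h})\skw\grad\boldsymbol u,\boldsymbol\tau_h)=\mathcal E_h(\boldsymbol u;\boldsymbol\tau_h)$.

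\emph{Second row.} I need $b_h(\boldsymbol\sigma,p;\boldsymbol v_h)-(\boldsymbol u,\boldsymbol v_h)=-(\boldsymbol f,\boldsymbol v_h)$. Using the first expression for $b_h$ and the continuity of $\boldsymbol\sigma$ across facets, the facet terms $(\boldsymbol n^{\intercal}\boldsymbol\sigma\boldsymbol n,\boldsymbol v_h\cdot\boldsymbol n)_{\partial T}$ telescope to zero. Indeed, $\boldsymbol n^\intercal\boldsymbol\sigma\boldsymbol n$ is single valued, $\boldsymbol v_h\cdot\boldsymbol n$ is continuous, and $\boldsymbol v_h\cdot\boldsymbol n=0$ on $\partial\Omega$. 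This leaves
\[
(\div\boldsymbol\sigma,\boldsymbol v_h)-(p,\div\boldsymbol v_h)=(\div\boldsymbol\sigma+\nabla p,\boldsymbol v_h),
\]
which equals $(\boldsymbol u-\boldsymbol f,\boldsymbol v_h)$ by the strong form of the momentum equation. Without assuming $H^1$ regularity, the same conclusion follows directly from \eqref{eq:BrinkmanMixed2}, since $\mathring{\mathbb V}_h^{\div}\subset H_0(\div,\Omega)$. This requires checking that the discrete $b_h(\boldsymbol\sigma,p;\cdot)$ coincides with the duality pairing $\langle\div\boldsymbol\sigma,\cdot\rangle$ on BDM functions, which is the motivating identity before \eqref{eq:BrinkmanDiscrete} extended from smooth fields to $H(\div)$ fields by density.

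\emph{Consistency estimate.} Apply Cauchy--Schwarz to get
\[
|\mathcal E_h(\boldsymbol u;\boldsymbol\tau_h)|\le\|(I-Q_{k,h})\skw\grad\boldsymbol u\|_0\,\|\boldsymbol\tau_h\|_0 .
\]
For $\boldsymbol u\in H^{k+2}$, the entries of $\skw\grad\boldsymbol u$ lie in $H^{k+1}$. The elementwise approximation property of $Q_{k,T}$ then gives $\|(I-Q_{k,h})\skw\grad\boldsymbol u\|_0\lesssim h^{k+1}|\boldsymbol u|_{k+2}$, which is \eqref{eq:BrinkmanMixedConsistencyEstimate}.

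The main obstacle is justifying the treatment of the facet terms in the second row at the available regularity of $\boldsymbol\sigma$. The first row is essentially algebraic once weak symmetry is used.
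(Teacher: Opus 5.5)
Your proposal is correct and follows the paper's proof essentially verbatim. The first row reduces to $-((I-Q_{k,h})\skw\grad\boldsymbol u,\boldsymbol\tau_h)$ via the facet-jump form of $b_h$, $\boldsymbol u\in H_0^1$, $\div\boldsymbol u=0$, tracelessness, and weak symmetry. The second row is the exact momentum equation, and the estimate is Cauchy--Schwarz plus the approximation property of $Q_{k,h}$.

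The paper only asserts the second-row identity, so your $H^1$-regularity justification is the right one; that regularity is available here since $\boldsymbol u\in H^{k+2}\subset H^2$. The density-based alternative is unnecessary. Without facet traces of $\boldsymbol\sigma$, the elementwise form of $b_h(\boldsymbol\sigma,p;\cdot)$ is not even defined.
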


\begin{proof}
	By Theorem~\ref{thm:BrinkmanEquivalence},
	\(\boldsymbol\sigma=\nu\boldsymbol{\varepsilon}(\boldsymbol u)\) and
	\(\div\boldsymbol u=0\). Hence, by the definition of \(b_h\),
	\[
	\begin{aligned}
		\nu^{-1}(\boldsymbol\sigma,\boldsymbol\tau_h)
		+b_h(\boldsymbol\tau_h,q_h;\boldsymbol u)
		&=
		(\boldsymbol{\varepsilon}(\boldsymbol u)-\dev\grad\boldsymbol u,
		\boldsymbol\tau_h)\\
		&=
		-\bigl((I-Q_{k,h})\skw\grad\boldsymbol u,
		\boldsymbol\tau_h\bigr),
	\end{aligned}
	\]
	where the last identity follows from
	\(Q_{k,h}\skw\grad\boldsymbol u\in
	\mathbb P_k(\mathcal T_h;\mathbb K)\) and the weak symmetry of
	\(\boldsymbol\tau_h\). Together with
	\[
	b_h(\boldsymbol\sigma,p;\boldsymbol v_h)
	-(\boldsymbol u,\boldsymbol v_h)
	=-(\boldsymbol f,\boldsymbol v_h),
	\]
	this proves \eqref{eq:BrinkmanMixedConsistency}. Estimate
	\eqref{eq:BrinkmanMixedConsistencyEstimate} follows from the approximation
	property of \(Q_{k,h}\) and the Cauchy--Schwarz inequality.
\end{proof}

For brevity, set
\[
\boldsymbol e_h^\sigma
:=I_h^{tn}\boldsymbol\sigma-\boldsymbol\sigma_h,
\qquad
e_h^p:=Q_{k,h}p-p_h,
\qquad
\boldsymbol e_h^u
:=I_{k+1,h}^{\div}\boldsymbol u-\boldsymbol u_h.
\]

\begin{theorem}\label{thm:BrinkmanError}
	Assume \(\boldsymbol u\in H^{k+2}(\Omega;\mathbb R^d)\). Then
	\begin{equation}\label{eq:BrinkmanProjectedError}
		\|\boldsymbol e_h^\sigma\|_{\nu^{-1},h}
		+\|e_h^p\|_0
		+\|\boldsymbol e_h^u\|_{\nu,h}
		\lesssim
		h^{k+1}(\nu^{1/2}+h)|\boldsymbol u|_{k+2}.
	\end{equation}
\end{theorem}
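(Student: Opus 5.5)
Subtracting \eqref{eq:BrinkmanDiscrete} from the consistency identity \eqref{eq:BrinkmanMixedConsistency} gives the Galerkin orthogonality
\[
A_{\nu,h}(\boldsymbol\sigma-\boldsymbol\sigma_h,p-p_h,\boldsymbol u-\boldsymbol u_h;\boldsymbol\tau_h,q_h,\boldsymbol v_h)=\mathcal E_h(\boldsymbol u;\boldsymbol\tau_h)
\]
for all discrete test functions. We then split
\[
(\boldsymbol\sigma-\boldsymbol\sigma_h,p-p_h,\boldsymbol u-\boldsymbol u_h)=(\boldsymbol e_h^\sigma,e_h^p,\boldsymbol e_h^u)-(I_h^{tn}\boldsymbol\sigma-\boldsymbol\sigma,Q_{k,h}p-p,I_{k+1,h}^{\div}\boldsymbol u-\boldsymbol u).
\]
The discrete error triple is bounded via the stability estimate \eqref{eq:BrinkmanDiscreteStability}: it suffices to bound
\[
A_{\nu,h}(I_h^{tn}\boldsymbol\sigma-\boldsymbol\sigma,Q_{k,h}p-p,I_{k+1,h}^{\div}\boldsymbol u-\boldsymbol u;\boldsymbol\tau_h,q_h,\boldsymbol v_h)+\mathcal E_h(\boldsymbol u;\boldsymbol\tau_h)
\]
by \(h^{k+1}(\nu^{1/2}+h)|\boldsymbol u|_{k+2}\) times \(\|\boldsymbol\tau_h\|_{\nu^{-1},h}+\|q_h\|_0+\|\boldsymbol v_h\|_{\nu,h}\).

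We treat the terms one at a time.

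\emph{Consistency term.} By \eqref{eq:BrinkmanMixedConsistencyEstimate}, \(|\mathcal E_h|\lesssim h^{k+1}|\boldsymbol u|_{k+2}\|\boldsymbol\tau_h\|_0=\nu^{1/2}h^{k+1}|\boldsymbol u|_{k+2}\,\nu^{-1/2}\|\boldsymbol\tau_h\|_0\).

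\emph{Stress mass term.} \(\nu^{-1}(I_h^{tn}\boldsymbol\sigma-\boldsymbol\sigma,\boldsymbol\tau_h)\) is bounded by Lemma~\ref{le:Ih-error} with \(s=k+1\) and \(\boldsymbol\sigma=\nu\boldsymbol\varepsilon(\boldsymbol u)\): it is at most \(\nu^{-1/2}\nu h^{k+1}|\boldsymbol u|_{k+2}\,\nu^{-1/2}\|\boldsymbol\tau_h\|_0\), again of order \(\nu^{1/2}h^{k+1}\).

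\emph{Velocity terms.} In \(b_h(\boldsymbol\tau_h,q_h;I_{k+1,h}^{\div}\boldsymbol u-\boldsymbol u)\), the \(q_h\) part vanishes by the commuting property \eqref{eq:divcommutative}, since \(\div\boldsymbol u=0\). The stress part is bounded via the boundedness of \(b_h\) (elementwise extension with trace inequality) by \(\|\boldsymbol\tau_h\|_{0,h}|\boldsymbol u-I^{\div}\boldsymbol u|_{1,h}\lesssim\nu^{1/2}h^{k+1}|\boldsymbol u|_{k+2}\|\boldsymbol\tau_h\|_{\nu^{-1},h}\) using \eqref{eq:Ihdivprop2}. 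The mass term \((I^{\div}\boldsymbol u-\boldsymbol u,\boldsymbol v_h)\lesssim h^{k+2}|\boldsymbol u|_{k+2}\|\boldsymbol v_h\|_0\) produces the \(h^{k+2}\) contribution.

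\emph{Pressure term.} \(-(\div\boldsymbol v_h,Q_{k,h}p-p)=0\), since \(\div\boldsymbol v_h\in\mathring{\mathcal Q}_h\).

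\emph{Main obstacle: the stress--velocity coupling.} The delicate term is \(b_h(I_h^{tn}\boldsymbol\sigma-\boldsymbol\sigma,0;\boldsymbol v_h)\), which must be bounded against \(\|\boldsymbol v_h\|_{\nu,h}\) with only a \(\nu^{1/2}\) weight. Writing
\[
b_h=-(\boldsymbol\eta,\dev\grad_h\boldsymbol v_h)+\sum_F(\Pi_F\boldsymbol\eta\boldsymbol n_F,[\![\Pi_F\boldsymbol v_h]\!])_F,\qquad \boldsymbol\eta:=I_h^{tn}\boldsymbol\sigma-\boldsymbol\sigma,
\]
the facet term is handled by \(h_F^{1/2}\)-weighted traces of \(\boldsymbol\eta\). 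These are bounded by \(\nu h^{k+1}|\boldsymbol u|_{k+2}\) via Lemma~\ref{le:Ih-error} and the trace inequality, paired with \(\nu^{1/2}h_F^{-1/2}\|[\![\Pi_F\boldsymbol v_h]\!]\|\), giving \(\nu^{1/2}h^{k+1}\).

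For the volume term, \(\dev\grad_h\boldsymbol v_h=\dev\boldsymbol\varepsilon_h(\boldsymbol v_h)+\skw\grad_h\boldsymbol v_h\), and the norm \(\|\cdot\|_{\nu,h}\) does not directly contain \(\skw\grad_h\). I would first try the discrete Korn inequality \eqref{eq:brokenkorn}, which bounds \(\nu^{1/2}\|\grad_h\boldsymbol v_h\|_0\lesssim\|\boldsymbol v_h\|_{\nu,h}\). This works because all terms on its right-hand side are controlled by the \(\nu\)-weighted parts of \(\|\boldsymbol v_h\|_{\nu,h}\) plus \(\|\div\boldsymbol v_h\|_0\le\nu^{-1/2}\|\div \boldsymbol v_h\|_0\cdot\nu^{1/2}\). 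The last factor needs care: \(\nu^{1/2}\|\div\boldsymbol v_h\|_0\le\|\div\boldsymbol v_h\|_0\) since \(\nu\le1\). Hence \(|(\boldsymbol\eta,\dev\grad_h\boldsymbol v_h)|\lesssim\nu h^{k+1}|\boldsymbol u|_{k+2}\,\nu^{-1/2}\|\boldsymbol v_h\|_{\nu,h}\).

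Summing all contributions gives \eqref{eq:BrinkmanProjectedError}.
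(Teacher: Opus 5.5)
Your proposal is correct and follows essentially the same route as the paper: subtract the discrete equations from the consistency identity \eqref{eq:BrinkmanMixedConsistency}, bound the resulting interpolation residual term by term (using Lemma~\ref{le:Ih-error}, \eqref{eq:Ihdivprop2}, the consistency estimate, and the vanishing of the pressure and discrete-divergence terms), and close with the stability estimate \eqref{eq:BrinkmanDiscreteStability}. The only local difference is in the volume part of \(b_h(I_h^{tn}\boldsymbol\sigma-\boldsymbol\sigma,0;\boldsymbol v_h)\): you control \(\nu^{1/2}\|\grad_h\boldsymbol v_h\|_0\lesssim\|\boldsymbol v_h\|_{\nu,h}\) through the discrete Korn inequality \eqref{eq:brokenkorn}, whereas the paper's appeal to continuity of \(b_h\) works without Korn because \(I_h^{tn}\boldsymbol\sigma-\boldsymbol\sigma\) is \(L^2\)-orthogonal to \(\mathbb P_k(\mathcal T_h;\mathbb K)\) (weak symmetry of \(I_h^{tn}\boldsymbol\sigma\), exact symmetry of \(\boldsymbol\sigma\)), so \(\skw\grad_h\boldsymbol v_h\) drops out and only \(\dev\boldsymbol{\varepsilon}_h(\boldsymbol v_h)\) needs to be controlled.
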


\begin{proof}
	By the commuting property \eqref{eq:divcommutative}, exact mass
	conservation \eqref{eq:BrinkmanExactDivergence}, and the
	\(L^2\)-orthogonality of \(Q_{k,h}\),
	\[
	\div\boldsymbol e_h^u=0,
	\qquad
	(Q_{k,h}p-p,\div\boldsymbol v_h)=0
	\quad\forall\,\boldsymbol v_h\in\mathring{\mathbb V}_h^{\div}.
	\]
	Subtracting the discrete equations from
	\eqref{eq:BrinkmanMixedConsistency} therefore gives
	\[
	\begin{aligned}
		&A_{\nu,h}(\boldsymbol e_h^\sigma,e_h^p,\boldsymbol e_h^u;
		\boldsymbol\tau_h,q_h,\boldsymbol v_h)\\
		&\quad=
		\nu^{-1}(I_h^{tn}\boldsymbol\sigma-\boldsymbol\sigma,
		\boldsymbol\tau_h)
		+b_h(\boldsymbol\tau_h,0;
		I_{k+1,h}^{\div}\boldsymbol u-\boldsymbol u)\\
		&\qquad
		+b_h(I_h^{tn}\boldsymbol\sigma-\boldsymbol\sigma,0;
		\boldsymbol v_h)
		-(I_{k+1,h}^{\div}\boldsymbol u-\boldsymbol u,
		\boldsymbol v_h)
		+\mathcal E_h(\boldsymbol u;\boldsymbol\tau_h).
	\end{aligned}
	\]
	Using \(\boldsymbol\sigma=\nu\boldsymbol{\varepsilon}(\boldsymbol u)\), the interpolation
	estimates and a standard trace argument give
	\[
	\nu^{-1}\|I_h^{tn}\boldsymbol\sigma-\boldsymbol\sigma\|_{0,h}
	+|I_{k+1,h}^{\div}\boldsymbol u-\boldsymbol u|_{1,h}
	+h^{-1}\|I_{k+1,h}^{\div}\boldsymbol u-\boldsymbol u\|_0
	\lesssim h^{k+1}|\boldsymbol u|_{k+2}.
	\]
	Together with \eqref{eq:BrinkmanMixedConsistencyEstimate} and the
	continuity of \(b_h\), this yields
	\[
	|A_{\nu,h}(\boldsymbol e_h^\sigma,e_h^p,
	\boldsymbol e_h^u;
	\boldsymbol\tau_h,q_h,\boldsymbol v_h)|
	\lesssim
	h^{k+1}(\nu^{1/2}+h)|\boldsymbol u|_{k+2}
	\bigl(\|\boldsymbol\tau_h\|_{\nu^{-1},h}
	+\|\boldsymbol v_h\|_{\nu,h}\bigr).
	\]
	Applying \eqref{eq:BrinkmanDiscreteStability} proves
	\eqref{eq:BrinkmanProjectedError}.
\end{proof}

\begin{corollary}\label{cor:BrinkmanExactError}
 Under the assumptions of Theorem~\ref{thm:BrinkmanError}, suppose further that
	\(p\in H^{k+1}(\Omega)\). Then
	\begin{equation}\label{eq:BrinkmanExactError}
		\begin{aligned}
			\nu^{-1/2}\|\boldsymbol\sigma-\boldsymbol\sigma_h\|_{0,h}
			+\nu^{1/2}|\boldsymbol u-\boldsymbol u_h|_{1,h}
			+\|\boldsymbol u-\boldsymbol u_h\|_0
			\lesssim h^{k+1}(\nu^{1/2}+h)|\boldsymbol u|_{k+2},&\\
			\|p-p_h\|_0
			\lesssim h^{k+1}
			\bigl((\nu^{1/2}+h)|\boldsymbol u|_{k+2}
			+|p|_{k+1}\bigr).&
		\end{aligned}
	\end{equation}
\end{corollary}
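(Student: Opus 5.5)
The corollary follows from Theorem~\ref{thm:BrinkmanError} by the triangle inequality. We split each error into an interpolation part and a discrete part:
\[
\boldsymbol\sigma-\boldsymbol\sigma_h=(\boldsymbol\sigma-I_h^{tn}\boldsymbol\sigma)+\boldsymbol e_h^\sigma,
\qquad
\boldsymbol u-\boldsymbol u_h=(\boldsymbol u-I_{k+1,h}^{\div}\boldsymbol u)+\boldsymbol e_h^u,
\qquad
p-p_h=(p-Q_{k,h}p)+e_h^p .
\]
The discrete parts are bounded by \eqref{eq:BrinkmanProjectedError}. The interpolation parts are bounded by the local estimates already used in the proof of Theorem~\ref{thm:BrinkmanError}.

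\textbf{Stress.} Since \(\boldsymbol\sigma=\nu\boldsymbol{\varepsilon}(\boldsymbol u)\in H^{k+1}\), Lemma~\ref{le:Ih-error} with \(s=k+1\) gives the volume part of \(\|\boldsymbol\sigma-I_h^{tn}\boldsymbol\sigma\|_{0,h}\). The facet terms follow from the scaled trace inequality
\[
h_F\|w\|_{0,F}^2\lesssim \|w\|_{0,T}^2+h_T^2|w|_{1,T}^2 .
\]
Together these give \(\|\boldsymbol\sigma-I_h^{tn}\boldsymbol\sigma\|_{0,h}\lesssim \nu h^{k+1}|\boldsymbol u|_{k+2}\). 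Multiplying by \(\nu^{-1/2}\) yields \(\nu^{1/2}h^{k+1}|\boldsymbol u|_{k+2}\), which is within the claimed bound. For the discrete part we use \(\nu^{-1/2}\|\boldsymbol e_h^\sigma\|_{0,h}=\|\boldsymbol e_h^\sigma\|_{\nu^{-1},h}\).

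\textbf{Velocity.} We have \(\nu^{1/2}|\boldsymbol e_h^u|_{1,h}+\|\boldsymbol e_h^u\|_0\lesssim \|\boldsymbol e_h^u\|_{\nu,h}\), since \(\div\boldsymbol e_h^u=0\) and the \(\nu\)-norm contains exactly the \(\nu\)-weighted \(|\cdot|_{1,h}\) terms plus the \(L^2\) term. The interpolation error satisfies
\[
|\boldsymbol u-I^{\div}_{k+1,h}\boldsymbol u|_{1,h}\lesssim h^{k+1}|\boldsymbol u|_{k+2},
\qquad
\|\boldsymbol u-I^{\div}_{k+1,h}\boldsymbol u\|_0\lesssim h^{k+2}|\boldsymbol u|_{k+2}.
\]
The first is \eqref{eq:Ihdivprop2} plus a trace argument for the jumps (the jump of \(\boldsymbol u\) is zero, including on boundary facets because \(\boldsymbol u\in H_0^1\)). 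With the weight \(\nu^{1/2}\), this gives \(\nu^{1/2}h^{k+1}+h^{k+2}=h^{k+1}(\nu^{1/2}+h)\) times \(|\boldsymbol u|_{k+2}\).

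\textbf{Pressure.} Standard approximation gives \(\|p-Q_{k,h}p\|_0\lesssim h^{k+1}|p|_{k+1}\). Combined with the bound on \(\|e_h^p\|_0\), this proves the second line of \eqref{eq:BrinkmanExactError}.

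The only mildly delicate point is the facet contribution in \(|\cdot|_{1,h}\) and \(\|\cdot\|_{0,h}\) for the interpolation errors. It is handled by the elementwise trace inequality above applied to \(\boldsymbol u-I^{\div}_{k+1,T}\boldsymbol u\) and \(\boldsymbol\sigma-I_T^{tn}\boldsymbol\sigma\), using the \(H^1\)-seminorm bounds in \eqref{eq:Ihdivprop2} and \eqref{eq:Itn-error}. This requires \(\boldsymbol\sigma\in H^{k+1}\), which holds because \(\boldsymbol u\in H^{k+2}\).
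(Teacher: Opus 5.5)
Your proposal is correct and follows the paper's argument. The paper's proof likewise splits each error into an interpolation part and a discrete part, bounds the discrete parts by Theorem~\ref{thm:BrinkmanError}, and bounds the interpolation parts by Lemma~\ref{le:Ih-error}, \eqref{eq:Ihdivprop2}, scaled trace inequalities for the facet terms, and the approximation property of \(Q_{k,h}\).
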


\begin{proof}
	The result follows from Theorem~\ref{thm:BrinkmanError}, the interpolation
	estimates, and the triangle inequality.
\end{proof}

\subsection{Parameter-uniform boundary-layer estimate}

The preceding estimates involve higher Sobolev norms that may deteriorate
as \(\nu\to0\) owing to boundary layers. We therefore compare with the Darcy
limit to derive a parameter-uniform estimate.

Throughout this subsection, assume further that \(\Omega\) is convex and
\(\boldsymbol f\in H(\curl,\Omega)\). We adopt the arbitrary-dimensional
convention
\(\curl\boldsymbol v:=\grad\boldsymbol v-(\grad\boldsymbol v)^{\intercal}\)
from \cite[Section~2]{HuangWang2023} and define
\[
H(\curl,\Omega)
:=
\{\boldsymbol v\in L^2(\Omega;\mathbb R^d):
\curl\boldsymbol v\in L^2(\Omega;\mathbb K)\}.
\]
We equip this space with the norm
\[
\|\boldsymbol v\|_{H(\curl)}^2
:=
\|\boldsymbol v\|_0^2+\|\curl\boldsymbol v\|_0^2.
\]
Let
\((\boldsymbol u^0,p^0)\in
H_0(\div,\Omega)\times L_0^2(\Omega)\) solve the Darcy limit problem
\begin{equation}\label{eq:BrinkmanLimitProblem}
\boldsymbol u^0-\nabla p^0=\boldsymbol f, \;\;
\div\boldsymbol u^0=0\;\quad\text{in }\Omega; \quad
\boldsymbol u^0\cdot\boldsymbol n=0 \;\quad\text{on }\partial\Omega.
\end{equation}
We further assume the parameter-explicit regularity estimates
\begin{equation}\label{eq:BrinkmanBoundaryRegularity}
	\begin{aligned}
		\nu\|\boldsymbol u\|_2+\nu^{1/2}\|\boldsymbol u\|_1
		+\|\boldsymbol u-\boldsymbol u^0\|_0+\|p-p^0\|_1
		&\lesssim\nu^{1/4}\|\boldsymbol f\|_{H(\curl)},\\
		\|\boldsymbol u^0\|_1+\|p^0\|_1
		&\lesssim\|\boldsymbol f\|_{H(\curl)}.
	\end{aligned}
\end{equation}
For \(d=2\), estimates of the form
\eqref{eq:BrinkmanBoundaryRegularity} are proved in
\cite[(6.12) and (6.15)]{MardalTaiWinther2002}; related
three-dimensional estimates under stronger regularity assumptions are
given in \cite{TaiWinther2006}.

To exploit the \(L^2\)-control of the boundary-layer remainder
\(\boldsymbol u-\boldsymbol u^0\), we use a locally \(L^2\)-bounded
commuting projection onto a Raviart--Thomas subspace
\cite{RaviartThomas1977,Boffi2013MixedFE}. Set
\[
\mathring{\mathbb{RT}}_{k,h}
:=
\left\{
\boldsymbol v_h\in H_0(\div,\Omega):
\boldsymbol v_h|_T\in
\mathbb P_k(T;\mathbb R^d)+\boldsymbol x\mathbb P_k(T)
\quad\forall\,T\in\mathcal T_h
\right\}.
\]
Then
\(\mathring{\mathbb{RT}}_{k,h}\subset
\mathring{\mathbb V}_h^{\div}\).
Let \(P_h^{\rm RT}:H_0(\div,\Omega)\to
\mathring{\mathbb{RT}}_{k,h}\) be the projection of
\cite[Definition~3.1 and Theorem~3.2]{ErnGudiSmearsVohralik2022}
with \(\Gamma_N=\partial\Omega\) and polynomial degree \(k\).
The construction is valid in arbitrary space dimensions; see
\cite[Section~1.6]{ErnGudiSmearsVohralik2022}.

By \cite[Theorem~3.2 and (3.9a)]{ErnGudiSmearsVohralik2022},
\[
\div P_h^{\rm RT}\boldsymbol v=Q_{k,h}(\div\boldsymbol v),
\qquad
\|P_h^{\rm RT}\boldsymbol v\|_0\lesssim\|\boldsymbol v\|_0
\quad\text{if }\div\boldsymbol v=0.
\]

\begin{lemma}\label{lem:RTProjectionProperties}
	For
	\(\boldsymbol v\in H^{k+1}(\Omega;\mathbb R^d)
	\cap H_0(\div,\Omega)\cap\ker(\div)\),
	\[
	\|(I-P_h^{\rm RT})\boldsymbol v\|_0
	\lesssim h^{k+1}|\boldsymbol v|_{k+1}.
	\]
	Moreover,
	\begin{equation}\label{eq:PhRTStability}
	|P_h^{\rm RT}\boldsymbol v|_{1,h}
	\lesssim|\boldsymbol v|_1
	\qquad
	\forall\,\boldsymbol v\in
	H_0^1(\Omega;\mathbb R^d)\cap\ker(\div).
	\end{equation}
\end{lemma}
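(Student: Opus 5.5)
The plan is to rely on three properties of the projection $P_h^{\rm RT}$ of \cite{ErnGudiSmearsVohralik2022}: it is a projection onto $\mathring{\mathbb{RT}}_{k,h}$, it is locally $L^2$-stable on divergence-free fields, and it commutes with the divergence. Both estimates will then follow from these properties and from the local approximation properties of $\mathring{\mathbb{RT}}_{k,h}$.

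\emph{First estimate.} Let $\boldsymbol v\in H^{k+1}\cap H_0(\div,\Omega)\cap\ker(\div)$. The commuting property gives $\div P_h^{\rm RT}\boldsymbol v=Q_{k,h}(\div\boldsymbol v)=0$. I would use the local best-approximation result of \cite[Theorem~3.2]{ErnGudiSmearsVohralik2022} directly: for divergence-free fields, $\|(I-P_h^{\rm RT})\boldsymbol v\|_{0,T}$ is bounded by the elementwise polynomial best-approximation error of $\boldsymbol v$ in $\mathbb P_k$ over a patch of $T$. The local version of the stability bound (3.9a) is precisely this. Summing over $T$ and using shape regularity (bounded patch overlap) together with Bramble--Hilbert gives $h^{k+1}|\boldsymbol v|_{k+1}$. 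If only the global stability bound were available, I would instead write $(I-P_h^{\rm RT})\boldsymbol v=(I-P_h^{\rm RT})(\boldsymbol v-\boldsymbol w_h)$ with $\boldsymbol w_h\in\mathring{\mathbb{RT}}_{k,h}\cap\ker(\div)$. For $\boldsymbol w_h$ I would take the canonical RT interpolant, which is divergence-free by commutation, has zero normal trace, and satisfies the needed approximation bound since $\boldsymbol v\in H^{k+1}$ with $k+1\ge1$. Applying the divergence-free $L^2$ stability then finishes the argument.

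\emph{Second estimate.} Let $\boldsymbol v\in H_0^1\cap\ker(\div)$ and set $\boldsymbol v_h:=P_h^{\rm RT}\boldsymbol v$, so that $\div\boldsymbol v_h=0$. I need to bound the three pieces of $|\boldsymbol v_h|_{1,h}$, namely $\|\dev\boldsymbol\varepsilon_h(\boldsymbol v_h)\|_0$ and the tangential jumps; the divergence term vanishes. I would compare with a local $H^1$-stable interpolant. Let $\boldsymbol\pi_h:=I^{\div}_{k+1,h}$-type interpolant, or more simply $Q_{0,h}\boldsymbol v$ on each element patch. Writing $\boldsymbol v_h=\boldsymbol v-(I-P_h^{\rm RT})\boldsymbol v$, the broken gradient and the jumps of $\boldsymbol v$ itself are fine, since $\boldsymbol v\in H_0^1$ has no jumps and $\|\grad\boldsymbol v\|_0\le|\boldsymbol v|_1$. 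It therefore suffices to show
\[
\sum_T\Bigl(|\boldsymbol e|_{1,T}^2+\sum_{F\in\mathcal F(T)}h_F^{-1}\|\boldsymbol e|_T\|_{0,F}^2\Bigr)\lesssim|\boldsymbol v|_1^2,
\qquad \boldsymbol e:=(I-P_h^{\rm RT})\boldsymbol v .
\]
Each $|\boldsymbol e|_{1,T}$ is not a discrete quantity, so I would split $\boldsymbol e=(\boldsymbol v-\boldsymbol c_T)-(\boldsymbol v_h-\boldsymbol c_T)$ with $\boldsymbol c_T$ the patch mean of $\boldsymbol v$. The first part is handled by Poincaré and the trace inequality on the patch. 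For the polynomial $\boldsymbol v_h-\boldsymbol c_T$ on $T$, I would use inverse and discrete trace inequalities to reduce to $h_T^{-1}\|\boldsymbol v_h-\boldsymbol c_T\|_{0,T}$. Then $\boldsymbol v_h-\boldsymbol c_T=P_h^{\rm RT}(\boldsymbol v-\boldsymbol c_T)$ locally, which holds because constants lie in $\mathbb{RT}_k$ and the projection is locally defined. Local $L^2$ stability then gives $h_T^{-1}\|\boldsymbol v-\boldsymbol c_T\|_{0,\omega_T}\lesssim|\boldsymbol v|_{1,\omega_T}$.

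\emph{Main obstacle.} The delicate point is near the boundary: $\boldsymbol c_T$ is not in $\mathring{\mathbb{RT}}_{k,h}$, and $\boldsymbol v-\boldsymbol c_T$ is not divergence-free-compatible with the boundary condition. For boundary patches I would take $\boldsymbol c_T=\boldsymbol0$ and use the Poincaré--Friedrichs inequality on patches touching $\partial\Omega$, which is valid since $\boldsymbol v\in H_0^1$. The local stability from \cite{ErnGudiSmearsVohralik2022} is stated in terms of patchwise best approximation in $\mathbb P_k$ with the boundary constraint built in, so that bound directly yields $\|\boldsymbol v-\boldsymbol v_h\|_{0,T}\lesssim h_T|\boldsymbol v|_{1,\omega_T}$. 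With this in hand, I would run the inverse-estimate argument above uniformly on all elements.
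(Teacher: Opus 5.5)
Your proof is correct. The first estimate matches the paper; the second estimate takes a different route.

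\textbf{First estimate.} Your fallback is exactly the paper's argument: subtract the canonical interpolant $I_h^{\rm RT}\boldsymbol v\in\mathring{\mathbb{RT}}_{k,h}\cap\ker(\div)$, which $P_h^{\rm RT}$ reproduces, and apply $L^2$-stability to the divergence-free difference. Your primary route via the local-best approximation theorem of Ern--Gudi--Smears--Vohral\'ik is also fine. Note, however, that their local-best errors are unconstrained elementwise minima over $\mathbb{RT}_k(K')$. The boundary condition enters only through $\boldsymbol v\in H_0(\div,\Omega)$; it is not ``built into'' the minimization.

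\textbf{Second estimate.} For \eqref{eq:PhRTStability}, the paper again compares with $I_h^{\rm RT}\boldsymbol v$. It writes $P_h^{\rm RT}\boldsymbol v=I_h^{\rm RT}\boldsymbol v+P_h^{\rm RT}(\boldsymbol v-I_h^{\rm RT}\boldsymbol v)$. The first term is controlled by local $H^1$ and trace estimates for the canonical interpolant, with $\boldsymbol v\in H_0^1$ handling the jumps. The second term is controlled by inverse and discrete trace inequalities together with local $L^2$-stability of $P_h^{\rm RT}$ on the divergence-free field $\boldsymbol v-I_h^{\rm RT}\boldsymbol v\in H_0(\div,\Omega)$.

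You instead compare with $\boldsymbol v$ itself and elementwise constants. Your argument closes once you have $\|\boldsymbol v-P_h^{\rm RT}\boldsymbol v\|_{0,T}\lesssim h_T|\boldsymbol v|_{1,\omega_T}$ on every element, and the local-best theorem does give this. With that bound in hand, the boundary/interior case split is unnecessary.

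Your intermediate claim that $P_h^{\rm RT}\boldsymbol v-\boldsymbol c_T=P_h^{\rm RT}(\boldsymbol v-\boldsymbol c_T)$ holds ``locally'' does not follow from the black-box properties you listed. A constant field is not in $H_0(\div,\Omega)$, so the right-hand side is undefined unless you open up the construction of the projector. The ``main obstacle'' you identify is an artifact of comparing with constants. By your own first-estimate trick, $\boldsymbol v-P_h^{\rm RT}\boldsymbol v=(I-P_h^{\rm RT})(\boldsymbol v-I_h^{\rm RT}\boldsymbol v)$. Local $L^2$-stability alone then gives $\|\boldsymbol v-P_h^{\rm RT}\boldsymbol v\|_{0,T}\lesssim\|\boldsymbol v-I_h^{\rm RT}\boldsymbol v\|_{0,\omega_T}\lesssim h_T|\boldsymbol v|_{1,\omega_T}$. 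This needs no boundary-specific Friedrichs inequality.

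\textbf{Comparison.} The paper's route needs only local stability of $P_h^{\rm RT}$ on divergence-free fields plus standard Raviart--Thomas interpolation facts. Yours invokes the stronger local-best approximation result, but it yields the elementwise error bound for $P_h^{\rm RT}$ as a by-product.
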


\begin{proof}
	Let \(I_h^{\rm RT}\) be the canonical Raviart--Thomas interpolant.
	Since
	\(\div(\boldsymbol v-I_h^{\rm RT}\boldsymbol v)=0\) and
	\(P_h^{\rm RT}I_h^{\rm RT}\boldsymbol v=I_h^{\rm RT}\boldsymbol v\),
	\[
	\|(I-P_h^{\rm RT})\boldsymbol v\|_0
	\lesssim
	\|\boldsymbol v-I_h^{\rm RT}\boldsymbol v\|_0
	\lesssim h^{k+1}|\boldsymbol v|_{k+1}.
	\]
	For the second estimate, the decomposition
	\(P_h^{\rm RT}\boldsymbol v
	=I_h^{\rm RT}\boldsymbol v
	+P_h^{\rm RT}(\boldsymbol v-I_h^{\rm RT}\boldsymbol v)\),
	together with local stability and scaling, gives \eqref{eq:PhRTStability}.
\end{proof}

\begin{lemma}\label{lem:BrinkmanDarcyComparison}
	Assume that
	\(\boldsymbol u^0\in H^{k+1}(\Omega;\mathbb R^d)\), and let
	\(\boldsymbol w_h:=P_h^{\rm RT}\boldsymbol u\). Then
	\begin{equation}\label{eq:BrinkmanComparisonApproximation}
		\|\boldsymbol u-\boldsymbol w_h\|_0
		+\|\boldsymbol u^0-\boldsymbol w_h\|_0
		\lesssim
		\|\boldsymbol u-\boldsymbol u^0\|_0
		+h^{k+1}|\boldsymbol u^0|_{k+1}.
	\end{equation}
	Moreover,
	\begin{equation}\label{eq:BrinkmanDarcyComparison}
		\begin{aligned}
			&\nu^{-1/2}\|\boldsymbol\sigma_h\|_0
			+\nu^{1/2}|\boldsymbol w_h-\boldsymbol u_h|_{1,h}
			+\|\boldsymbol w_h-\boldsymbol u_h\|_0
			+\|Q_{k,h}p^0-p_h\|_0\\
			&\qquad\lesssim
			\nu^{1/2}|\boldsymbol u|_1
			+\|\boldsymbol u-\boldsymbol u^0\|_0
			+h^{k+1}|\boldsymbol u^0|_{k+1}.
		\end{aligned}
	\end{equation}
\end{lemma}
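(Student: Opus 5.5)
For \eqref{eq:BrinkmanComparisonApproximation}, I would write \(\boldsymbol u-\boldsymbol w_h=(I-P_h^{\rm RT})\boldsymbol u\) and split \(\boldsymbol u=(\boldsymbol u-\boldsymbol u^0)+\boldsymbol u^0\). Both pieces lie in \(H_0(\div,\Omega)\cap\ker(\div)\): \(\boldsymbol u\in H_0^1\) is divergence free, and \(\boldsymbol u^0\) solves \eqref{eq:BrinkmanLimitProblem}. The \(L^2\)-stability of \(P_h^{\rm RT}\) on divergence-free fields bounds \(\|(I-P_h^{\rm RT})(\boldsymbol u-\boldsymbol u^0)\|_0\lesssim\|\boldsymbol u-\boldsymbol u^0\|_0\). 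Lemma~\ref{lem:RTProjectionProperties} bounds \(\|(I-P_h^{\rm RT})\boldsymbol u^0\|_0\lesssim h^{k+1}|\boldsymbol u^0|_{k+1}\). The second term follows from the triangle inequality \(\|\boldsymbol u^0-\boldsymbol w_h\|_0\le\|\boldsymbol u^0-\boldsymbol u\|_0+\|\boldsymbol u-\boldsymbol w_h\|_0\).

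For \eqref{eq:BrinkmanDarcyComparison}, I would apply the discrete stability \eqref{eq:BrinkmanDiscreteStability} to the triple \((\boldsymbol\sigma_h,\,p_h-Q_{k,h}p^0,\,\boldsymbol u_h-\boldsymbol w_h)\). This triple is admissible: \(\div\boldsymbol w_h=Q_{k,h}\div\boldsymbol u=0\) and \(\boldsymbol w_h\in\mathring{\mathbb{RT}}_{k,h}\subset\mathring{\mathbb V}_h^{\div}\). I would compare with \((0,Q_{k,h}p^0,\boldsymbol w_h)\), deliberately taking the \emph{zero} stress instead of an interpolant. Using \eqref{eq:BrinkmanDiscrete}, the residual is
\[
A_{\nu,h}(\boldsymbol\sigma_h,p_h-Q_{k,h}p^0,\boldsymbol u_h-\boldsymbol w_h;\boldsymbol\tau_h,q_h,\boldsymbol v_h)
=-b_h(\boldsymbol\tau_h,q_h;\boldsymbol w_h)-(\boldsymbol f,\boldsymbol v_h)+(Q_{k,h}p^0,\div\boldsymbol v_h)+(\boldsymbol w_h,\boldsymbol v_h).
\]
The divergence part of \(b_h(\cdot,q_h;\boldsymbol w_h)\) vanishes. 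Since \(\div\boldsymbol v_h\in\mathring{\mathcal Q}_h\), we have \((Q_{k,h}p^0,\div\boldsymbol v_h)=(p^0,\div\boldsymbol v_h)=-(\nabla p^0,\boldsymbol v_h)\). Because \(\boldsymbol f=\boldsymbol u^0-\nabla p^0\), the velocity terms reduce to \((\boldsymbol w_h-\boldsymbol u^0,\boldsymbol v_h)\), which is bounded by \eqref{eq:BrinkmanComparisonApproximation} times \(\|\boldsymbol v_h\|_{\nu,h}\). This is the pressure-robust mechanism: \(p^0\) is eliminated exactly, with no dependence on \(\nu\) and no regularity of \(p^0\) required.

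The remaining term is the stress consistency term \(b_h(\boldsymbol\tau_h,0;\boldsymbol w_h)\), and I expect it to be the main obstacle. Continuity of \(b_h\) gives \(|b_h(\boldsymbol\tau_h,0;\boldsymbol w_h)|\lesssim\|\boldsymbol\tau_h\|_{0,h}|\boldsymbol w_h|_{1,h}=\nu^{1/2}\|\boldsymbol\tau_h\|_{\nu^{-1},h}|\boldsymbol w_h|_{1,h}\). By \eqref{eq:PhRTStability}, \(|\boldsymbol w_h|_{1,h}\lesssim|\boldsymbol u|_1\), since \(\boldsymbol u\in H_0^1\cap\ker(\div)\). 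This produces exactly the \(\nu^{1/2}|\boldsymbol u|_1\) term. The point to check is that \(P_h^{\rm RT}\boldsymbol u\) is \(|\cdot|_{1,h}\)-stable for an \(H_0^1\) field with a boundary layer. In particular, the boundary tangential jumps \(h_F^{-1}\|\Pi_F\boldsymbol w_h\|_{0,F}^2\) must be controlled via \(\Pi_F\boldsymbol u=0\) and local trace/scaling, and this is what \eqref{eq:PhRTStability} provides.

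Collecting the terms, \eqref{eq:BrinkmanDiscreteStability} bounds \(\nu^{-1/2}\|\boldsymbol\sigma_h\|_{0,h}+\|p_h-Q_{k,h}p^0\|_0+\|\boldsymbol u_h-\boldsymbol w_h\|_{\nu,h}\) by the claimed right-hand side. Since \(\|\cdot\|_{\nu,h}\) dominates \(\nu^{1/2}|\cdot|_{1,h}\) and \(\|\cdot\|_0\) on the divergence-free difference, \eqref{eq:BrinkmanDarcyComparison} follows.
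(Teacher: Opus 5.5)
Your proposal is correct and follows the paper's proof essentially step by step. It uses the same linear splitting of $(I-P_h^{\rm RT})\boldsymbol u$, controlled by the divergence-free $L^2$-stability of $P_h^{\rm RT}$ and Lemma~\ref{lem:RTProjectionProperties}. It also applies \eqref{eq:BrinkmanDiscreteStability} to $(\boldsymbol\sigma_h,p_h-Q_{k,h}p^0,\boldsymbol u_h-\boldsymbol w_h)$ with the same residual $-b_h(\boldsymbol\tau_h,0;\boldsymbol w_h)+(\boldsymbol w_h-\boldsymbol u^0,\boldsymbol v_h)$, bounded via continuity of $b_h$ and \eqref{eq:PhRTStability}. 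The only difference is that you spell out details the paper leaves implicit, such as eliminating $p^0$ through $\div\boldsymbol v_h\in\mathring{\mathcal Q}_h$ and $\boldsymbol f=\boldsymbol u^0-\nabla p^0$.
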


\begin{proof}
	The commuting property and Lemma~\ref{lem:RTProjectionProperties} give
	\(\div\boldsymbol w_h=0\) and
	\(|\boldsymbol w_h|_{1,h}\lesssim|\boldsymbol u|_1\).
	Since \(\boldsymbol u-\boldsymbol u^0\) is divergence-free, the
	\(L^2\)-stability of \(P_h^{\rm RT}\) and
	Lemma~\ref{lem:RTProjectionProperties} give
	\[
	\|\boldsymbol u-\boldsymbol w_h\|_0
	\lesssim
	\|\boldsymbol u-\boldsymbol u^0\|_0
	+h^{k+1}|\boldsymbol u^0|_{k+1}.
	\]
	The triangle inequality proves
	\eqref{eq:BrinkmanComparisonApproximation}.
	
	The discrete equations and \eqref{eq:BrinkmanLimitProblem} give
	\[
	A_{\nu,h}(\boldsymbol\sigma_h,p_h-Q_{k,h}p^0,
	\boldsymbol u_h-\boldsymbol w_h;
	\boldsymbol\tau_h,q_h,\boldsymbol v_h)
	=
	-b_h(\boldsymbol\tau_h,0;\boldsymbol w_h)
	+(\boldsymbol w_h-\boldsymbol u^0,\boldsymbol v_h).
	\]
	The stability estimate \eqref{eq:BrinkmanDiscreteStability} yields
	\[
	\begin{aligned}
		&\nu^{-1/2}\|\boldsymbol\sigma_h\|_0
		+\nu^{1/2}|\boldsymbol w_h-\boldsymbol u_h|_{1,h}
		+\|\boldsymbol w_h-\boldsymbol u_h\|_0
		+\|Q_{k,h}p^0-p_h\|_0\\
		&\qquad\lesssim
		\nu^{1/2}|\boldsymbol w_h|_{1,h}
		+\|\boldsymbol w_h-\boldsymbol u^0\|_0.
	\end{aligned}
	\]
	Using \eqref{eq:BrinkmanComparisonApproximation} proves
	\eqref{eq:BrinkmanDarcyComparison}.
\end{proof}

\begin{theorem}\label{thm:BrinkmanBoundaryLayerOptimalError}
	Suppose that
	\(\boldsymbol u^0\in H^{k+1}(\Omega;\mathbb R^d)\) and
	\(p^0\in H^{k+1}(\Omega)\). Then
	\begin{equation}\label{eq:BrinkmanBoundaryLayerOptimalError}
		\begin{aligned}
			&\nu^{-1/2}\|\boldsymbol\sigma-\boldsymbol\sigma_h\|_0
			+\nu^{1/2}|\boldsymbol u-\boldsymbol u_h|_{1,h}
			+\|\boldsymbol u-\boldsymbol u_h\|_0+\|p-p_h\|_0\\
			&\qquad\lesssim
			\nu^{1/4}\|\boldsymbol f\|_{H(\curl)}
			+h^{k+1}
			\bigl(
			|\boldsymbol u^0|_{k+1}+|p^0|_{k+1}
			\bigr).
		\end{aligned}
	\end{equation}
	Moreover,
	\begin{equation}\label{eq:BrinkmanDarcyLimitError}
		\begin{aligned}
			&\nu^{1/2}
			\bigl\|Q_h^\Sigma(\dev\grad\boldsymbol u^0)
			-\nu^{-1}\boldsymbol\sigma_h\bigr\|_0
			+\|\boldsymbol u^0-\boldsymbol u_h\|_0
			+\|p^0-p_h\|_0\\
			&\qquad\lesssim
			\nu^{1/4}\|\boldsymbol f\|_{H(\curl)}
			+h^{k+1}
			\bigl(
			|\boldsymbol u^0|_{k+1}+|p^0|_{k+1}
			\bigr).
		\end{aligned}
	\end{equation}
\end{theorem}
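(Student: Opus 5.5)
The plan is to combine Lemma~\ref{lem:BrinkmanDarcyComparison} with the triangle inequality, inserting the comparison function \(\boldsymbol w_h=P_h^{\rm RT}\boldsymbol u\) and the limit quantities \((\boldsymbol u^0,p^0)\), and then to convert every parameter-dependent term into \(\nu^{1/4}\|\boldsymbol f\|_{H(\curl)}\) using the regularity assumption \eqref{eq:BrinkmanBoundaryRegularity}.

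\emph{Step 1: the right-hand side of \eqref{eq:BrinkmanDarcyComparison}.} By \eqref{eq:BrinkmanBoundaryRegularity}, \(\nu^{1/2}|\boldsymbol u|_1+\|\boldsymbol u-\boldsymbol u^0\|_0\lesssim\nu^{1/4}\|\boldsymbol f\|_{H(\curl)}\). Hence the left-hand side of \eqref{eq:BrinkmanDarcyComparison}, and also the left-hand side of \eqref{eq:BrinkmanComparisonApproximation}, is bounded by \(R:=\nu^{1/4}\|\boldsymbol f\|_{H(\curl)}+h^{k+1}|\boldsymbol u^0|_{k+1}\).

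\emph{Step 2: the velocity and pressure terms.} For the velocity, split
\[
\boldsymbol u-\boldsymbol u_h=(\boldsymbol u-\boldsymbol w_h)+(\boldsymbol w_h-\boldsymbol u_h).
\]
The \(L^2\) part of each piece is then bounded by \(R\). For the seminorm, write \(\nu^{1/2}|\boldsymbol u-\boldsymbol w_h|_{1,h}\le\nu^{1/2}|\boldsymbol u|_1+\nu^{1/2}|\boldsymbol w_h|_{1,h}\lesssim\nu^{1/2}|\boldsymbol u|_1\), using \eqref{eq:PhRTStability} and the fact that \(\boldsymbol u\in H_0^1\) has no jumps. The same split with \(\boldsymbol u^0\) in place of \(\boldsymbol u\) gives \(\|\boldsymbol u^0-\boldsymbol u_h\|_0\lesssim R\). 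For the pressure, write
\[
p-p_h=(p-p^0)+(p^0-Q_{k,h}p^0)+(Q_{k,h}p^0-p_h).
\]
The three pieces are bounded, in order, by \(\|p-p^0\|_1\lesssim\nu^{1/4}\|\boldsymbol f\|_{H(\curl)}\), by \(h^{k+1}|p^0|_{k+1}\), and by \eqref{eq:BrinkmanDarcyComparison}. The same argument gives \(\|p^0-p_h\|_0\).

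\emph{Step 3: the stress terms.} Since \(\boldsymbol\sigma=\nu\boldsymbol{\varepsilon}(\boldsymbol u)\), we have
\[
\nu^{-1/2}\|\boldsymbol\sigma-\boldsymbol\sigma_h\|_0\le\nu^{1/2}|\boldsymbol u|_1+\nu^{-1/2}\|\boldsymbol\sigma_h\|_0,
\]
and both terms are controlled by Step 1. For the Darcy-limit stress term in \eqref{eq:BrinkmanDarcyLimitError}, note that \(Q_h^\Sigma\) is an \(L^2\)-orthogonal projection and hence \(L^2\)-stable. This gives
\[
\nu^{1/2}\|Q_h^\Sigma(\dev\grad\boldsymbol u^0)-\nu^{-1}\boldsymbol\sigma_h\|_0\le\nu^{1/2}\|\grad\boldsymbol u^0\|_0+\nu^{-1/2}\|\boldsymbol\sigma_h\|_0\lesssim\nu^{1/2}\|\boldsymbol f\|_{H(\curl)}+R,
\]
where the last step uses the second line of \eqref{eq:BrinkmanBoundaryRegularity}. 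Since \(\nu\le1\), we have \(\nu^{1/2}\le\nu^{1/4}\), which closes the estimate.

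The main obstacle is keeping every term at order \(\nu^{1/4}\). This works only because the velocity seminorm error always carries the weight \(\nu^{1/2}\), which pairs with \(\nu^{1/2}\|\boldsymbol u\|_1\lesssim\nu^{1/4}\|\boldsymbol f\|_{H(\curl)}\). The argument also relies on the \(H^1\)-stability \eqref{eq:PhRTStability} of \(P_h^{\rm RT}\) for divergence-free \(H_0^1\) fields; this is why \(P_h^{\rm RT}\) is used instead of the BDM interpolant, whose \(L^2\) bound would require more regularity than is available.
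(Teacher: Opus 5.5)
Your proposal is correct and follows essentially the same route as the paper: insert \(\boldsymbol w_h=P_h^{\rm RT}\boldsymbol u\) and \(Q_{k,h}p^0\), apply the triangle inequality together with \eqref{eq:PhRTStability}, \eqref{eq:BrinkmanComparisonApproximation} and \eqref{eq:BrinkmanDarcyComparison}, and close with \eqref{eq:BrinkmanBoundaryRegularity}. The only cosmetic difference is in the Darcy-limit stress term. You bound \(\nu^{1/2}\|Q_h^\Sigma(\dev\grad\boldsymbol u^0)\|_0+\nu^{-1/2}\|\boldsymbol\sigma_h\|_0\) directly, whereas the paper uses \(Q_h^\Sigma\boldsymbol\sigma_h=\boldsymbol\sigma_h\) to pass through \(\nu^{-1/2}\|\boldsymbol\sigma-\boldsymbol\sigma_h\|_0\) and the already established first estimate; both are equally valid.
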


\begin{proof}
	Let \(\boldsymbol w_h=P_h^{\rm RT}\boldsymbol u\). By
	\(|\boldsymbol w_h|_{1,h}\lesssim|\boldsymbol u|_1\), the triangle
	inequality gives
	\[
	\begin{aligned}
		&\nu^{-1/2}\|\boldsymbol\sigma-\boldsymbol\sigma_h\|_0
		+\nu^{1/2}|\boldsymbol u-\boldsymbol u_h|_{1,h}
		+\|\boldsymbol u-\boldsymbol u_h\|_0+\|p-p_h\|_0\\
		&\quad\lesssim
		\nu^{1/2}|\boldsymbol u|_1
		+\|\boldsymbol u-\boldsymbol w_h\|_0
		+\|p-p^0\|_0+h^{k+1}|p^0|_{k+1}\\
		&\qquad
		+\nu^{-1/2}\|\boldsymbol\sigma_h\|_0
		+\nu^{1/2}|\boldsymbol w_h-\boldsymbol u_h|_{1,h}
		+\|\boldsymbol w_h-\boldsymbol u_h\|_0
		+\|Q_{k,h}p^0-p_h\|_0.
	\end{aligned}
	\]
	Applying \eqref{eq:BrinkmanComparisonApproximation} and
	\eqref{eq:BrinkmanDarcyComparison} to this bound, followed by
	\eqref{eq:BrinkmanBoundaryRegularity}, yields
	\eqref{eq:BrinkmanBoundaryLayerOptimalError}.
	
	Since \(Q_h^\Sigma\boldsymbol\sigma_h=\boldsymbol\sigma_h\) and
	\(\boldsymbol\sigma=\nu\boldsymbol{\varepsilon}(\boldsymbol u)\), the
	\(L^2\)-stability of \(Q_h^\Sigma\) gives
	\[
	\begin{aligned}
		\nu^{1/2}
		\bigl\|Q_h^\Sigma(\dev\grad\boldsymbol u^0)
		-\nu^{-1}\boldsymbol\sigma_h\bigr\|_0\le
		\nu^{1/2}\|\dev\grad\boldsymbol u^0
		-\boldsymbol{\varepsilon}(\boldsymbol u)\|_0
		+\nu^{-1/2}\|\boldsymbol\sigma-\boldsymbol\sigma_h\|_0.
	\end{aligned}
	\]
	Estimate \eqref{eq:BrinkmanDarcyLimitError} follows from
	\eqref{eq:BrinkmanBoundaryRegularity} and
	\eqref{eq:BrinkmanBoundaryLayerOptimalError}.
\end{proof}

\section{Equivalent formulations}\label{sub:equivformulations}

The distributional mixed method \eqref{eq:BrinkmanDiscrete} admits two
algebraically equivalent formulations. We first derive a stress-hybridized
formulation and then present an equivalent stabilization-free virtual element
formulation.

\subsection{Stress-hybridized formulation}

We relax the tangential--normal stress continuity in
\eqref{eq:BrinkmanDiscrete}. 
The corresponding tangential facet multiplier space is
\[
\Lambda_h^t
:=
\bigl\{
\boldsymbol\lambda_h\in L^2(\mathcal F_h;\mathbb R^{d-1}):
\boldsymbol\lambda_h|_F\in \mathcal R_k^t(F)\quad \forall\,F\in\mathring{\mathcal F}_h,\;
\boldsymbol\lambda_h|_F=\boldsymbol0\quad \forall\,F\in\mathcal F_h^\partial
\bigr\}.
\]
For
\((\boldsymbol v_h,\boldsymbol\lambda_h)\in
H^1(\mathcal T_h;\mathbb R^d)\times\Lambda_h^t\), define
\(\dev\boldsymbol{\varepsilon}_w(\boldsymbol v_h,\boldsymbol\lambda_h)\in\Sigma_h^{-1}\) by
\[
\begin{aligned}
	(\dev\boldsymbol{\varepsilon}_w(\boldsymbol v_h,\boldsymbol\lambda_h),\boldsymbol\tau_h)
	:={}&
	\sum_{T\in\mathcal T_h}\Bigl[
	-(\boldsymbol v_h,\div\boldsymbol\tau_h)_T
	+(\boldsymbol v_h\cdot\boldsymbol n_{\partial T},
	\boldsymbol n_{\partial T}^{\intercal}\boldsymbol\tau_h
	\boldsymbol n_{\partial T})_{\partial T}\\
	&\qquad+
	\sum_{F\in\mathcal F(T)}
	(\boldsymbol\lambda_h,\Pi_F\boldsymbol\tau_h\boldsymbol n_{\partial T})_F
	\Bigr]
\end{aligned}
\]
for all \(\boldsymbol\tau_h\in\Sigma_h^{-1}\), which uniquely determines
\(\dev\boldsymbol{\varepsilon}_w(\boldsymbol v_h,\boldsymbol\lambda_h)\).
For \(\boldsymbol v_h\in\mathbb V_h^{\div}\), integration by parts and
weak symmetry give, for all
\(\boldsymbol\tau_h\in\Sigma_h^{-1}\),
\[
\begin{aligned}
	(\dev\boldsymbol{\varepsilon}_w(\boldsymbol v_h,\boldsymbol\lambda_h),\boldsymbol\tau_h)
	={}&(\dev\boldsymbol{\varepsilon}_h(\boldsymbol v_h),\boldsymbol\tau_h)\\
	&-\sum_{T\in\mathcal T_h}\sum_{F\in\mathcal F(T)}
	\bigl(Q_F^t(\Pi_F(\boldsymbol v_h|_T))-\boldsymbol\lambda_h,
	\Pi_F\boldsymbol\tau_h\boldsymbol n_{\partial T}\bigr)_F.
\end{aligned}
\]
For \(\boldsymbol\tau_h\in\Sigma_h^{tn}\), the definition of \(b_h\) gives
\begin{equation}\label{eq:BrinkmanWeakStrainRelation}
	(\dev\boldsymbol{\varepsilon}_w(\boldsymbol v_h,\boldsymbol\lambda_h),\boldsymbol\tau_h)
	=-b_h(\boldsymbol\tau_h,0;\boldsymbol v_h).
\end{equation}

The stress-hybridized method seeks
\((\boldsymbol u_h,\boldsymbol\lambda_h,p_h)\in
\mathring{\mathbb V}_h^{\div}\times\Lambda_h^t\times
\mathring{\mathcal Q}_h\) such that
\begin{subequations}\label{eq:BrinkmanHybridized}
	\begin{align}
		\nu(\dev\boldsymbol{\varepsilon}_w(\boldsymbol u_h,\boldsymbol\lambda_h),
		\dev\boldsymbol{\varepsilon}_w(\boldsymbol v_h,\boldsymbol\mu_h))
		+
		(\boldsymbol u_h,\boldsymbol v_h)
		+
		(\div\boldsymbol v_h,p_h)
		&=
		(\boldsymbol f,\boldsymbol v_h),
		\label{eq:BrinkmanHybridized1}\\
		(\div\boldsymbol u_h,q_h)
		&=0,
		\label{eq:BrinkmanHybridized2}
	\end{align}
\end{subequations}
for all
\((\boldsymbol v_h,\boldsymbol\mu_h,q_h)\in
\mathring{\mathbb V}_h^{\div}\times\Lambda_h^t\times
\mathring{\mathcal Q}_h\).

\begin{theorem}\label{thm:BrinkmanHybridized}
	Method \eqref{eq:BrinkmanHybridized} is well posed and algebraically
	equivalent to \eqref{eq:BrinkmanDiscrete}. If
	\((\boldsymbol u_h,\boldsymbol\lambda_h,p_h)\) solves
	\eqref{eq:BrinkmanHybridized}, then
	$
	\boldsymbol\sigma_h
	:=\nu\dev\boldsymbol{\varepsilon}_w(\boldsymbol u_h,\boldsymbol\lambda_h)
	$
	together with \((\boldsymbol u_h,p_h)\) solves
	\eqref{eq:BrinkmanDiscrete}.
\end{theorem}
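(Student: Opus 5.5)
The plan is to eliminate the multiplier by static condensation and show that the two systems have the same solution set. Then well-posedness of \eqref{eq:BrinkmanHybridized} is inherited from Theorem~\ref{thm:BrinkmanDiscrete}, once we know that the hybridized solution is unique. The key observation is that the space \(\Sigma_h^{tn}\) is exactly the subspace of \(\Sigma_h^{-1}\) annihilated by the multiplier terms. More precisely, for \(\boldsymbol\tau_h\in\Sigma_h^{-1}\),
\[
\sum_{T\in\mathcal T_h}\sum_{F\in\mathcal F(T)}(\boldsymbol\mu_h,\Pi_F\boldsymbol\tau_h\boldsymbol n_{\partial T})_F=0
\quad\forall\,\boldsymbol\mu_h\in\Lambda_h^t
\iff \boldsymbol\tau_h\in\Sigma_h^{tn}.
\]
This follows because the tangential--normal traces of local elements lie in \(\mathcal R_k^t(F)\), with \(\boldsymbol n_{\partial T}=\pm\boldsymbol n_F\). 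On interior facets the pairing therefore tests exactly the jump \([\![\Pi_F\boldsymbol\tau_h\boldsymbol n_F]\!]\in\mathcal R_k^t(F)\). On boundary facets \(\boldsymbol\mu_h=0\), which matches the absence of boundary constraints in \(\Sigma_h^{tn}\).

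\emph{Hybridized \(\Rightarrow\) mixed.} Let \((\boldsymbol u_h,\boldsymbol\lambda_h,p_h)\) solve \eqref{eq:BrinkmanHybridized} and set \(\boldsymbol\sigma_h:=\nu\dev\boldsymbol{\varepsilon}_w(\boldsymbol u_h,\boldsymbol\lambda_h)\in\Sigma_h^{-1}\). Take \(\boldsymbol v_h=0\) and arbitrary \(\boldsymbol\mu_h\) in \eqref{eq:BrinkmanHybridized1}. By the definition of \(\dev\boldsymbol{\varepsilon}_w\), which is linear with \(\dev\boldsymbol{\varepsilon}_w(0,\boldsymbol\mu_h)\) paired against \(\boldsymbol\sigma_h\) giving exactly the multiplier sum, this yields \(\sum_T\sum_F(\boldsymbol\mu_h,\Pi_F\boldsymbol\sigma_h\boldsymbol n_{\partial T})_F=0\). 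Hence \(\boldsymbol\sigma_h\in\Sigma_h^{tn}\) by the observation above.

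Next, for \(\boldsymbol\tau_h\in\Sigma_h^{tn}\), identity \eqref{eq:BrinkmanWeakStrainRelation} gives
\[
\nu^{-1}(\boldsymbol\sigma_h,\boldsymbol\tau_h)=(\dev\boldsymbol{\varepsilon}_w(\boldsymbol u_h,\boldsymbol\lambda_h),\boldsymbol\tau_h)=-b_h(\boldsymbol\tau_h,0;\boldsymbol u_h).
\]
Together with \((\div\boldsymbol u_h,q_h)=0\) from \eqref{eq:BrinkmanHybridized2}, this is \eqref{eq:BrinkmanDiscrete1}. Now take \(\boldsymbol\mu_h=0\) in \eqref{eq:BrinkmanHybridized1}. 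Since \(\boldsymbol\sigma_h\in\Sigma_h^{tn}\), \eqref{eq:BrinkmanWeakStrainRelation} gives \(\nu(\dev\boldsymbol{\varepsilon}_w(\boldsymbol u_h,\boldsymbol\lambda_h),\dev\boldsymbol{\varepsilon}_w(\boldsymbol v_h,0))=-b_h(\boldsymbol\sigma_h,0;\boldsymbol v_h)\). The first equation then becomes \(-b_h(\boldsymbol\sigma_h,p_h;\boldsymbol v_h)+(\boldsymbol u_h,\boldsymbol v_h)=(\boldsymbol f,\boldsymbol v_h)\), which is \eqref{eq:BrinkmanDiscrete2}.

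\emph{Uniqueness and existence.} Because the systems are square, well-posedness reduces to showing that the homogeneous hybridized problem has only the trivial solution. With \(\boldsymbol f=0\), the previous step produces a solution of the homogeneous problem \eqref{eq:BrinkmanDiscrete}. Theorem~\ref{thm:BrinkmanDiscrete} then gives \(\boldsymbol\sigma_h=0\), \(\boldsymbol u_h=0\), \(p_h=0\). It remains to deduce \(\boldsymbol\lambda_h=0\), and this is the main point. From \(\dev\boldsymbol{\varepsilon}_w(0,\boldsymbol\lambda_h)=0\) we get \(\sum_T\sum_F(\boldsymbol\lambda_h,\Pi_F\boldsymbol\tau_h\boldsymbol n_{\partial T})_F=0\) for all \(\boldsymbol\tau_h\in\Sigma_h^{-1}\). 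By local unisolvence (Lemmas~\ref{le:unisolventTS} and \ref{le:unisolventTS0}), the facet DoFs \eqref{eq:tndofTS1}, respectively \eqref{eq:tndofTS01}--\eqref{eq:tndofTS02}, can be prescribed independently on a single element \(T\) adjacent to an interior facet \(F\). We may therefore choose \(\boldsymbol\tau_h\) supported on \(T\) with \(\Pi_F\boldsymbol\tau_h\boldsymbol n_{\partial T}=\boldsymbol\lambda_h|_F\) and zero trace on the other facets, which forces \(\boldsymbol\lambda_h|_F=0\). Hence \(\boldsymbol\lambda_h=0\), and existence follows from uniqueness in finite dimensions.

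\emph{Converse equivalence.} For a solution of \eqref{eq:BrinkmanDiscrete}, the unique hybridized solution maps, by the first step, to a solution of \eqref{eq:BrinkmanDiscrete}. By uniqueness it must coincide, which establishes the algebraic equivalence.
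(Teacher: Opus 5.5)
Your proof is correct and follows the paper's argument. Testing with $\boldsymbol v_h=\boldsymbol 0$ recovers tangential--normal continuity of $\boldsymbol\sigma_h$; \eqref{eq:BrinkmanWeakStrainRelation} and the test $\boldsymbol\mu_h=\boldsymbol 0$ give the two discrete equations; and $\boldsymbol\lambda_h=\boldsymbol 0$ comes from local unisolvence, by prescribing the facet moments to match $\boldsymbol\lambda_h$. The only difference is in the homogeneous problem: you obtain $\boldsymbol\sigma_h=\boldsymbol 0$, $\boldsymbol u_h=\boldsymbol 0$, $p_h=0$ via the equivalence and Theorem~\ref{thm:BrinkmanDiscrete}, whereas the paper gets them directly from the energy test $(\boldsymbol v_h,\boldsymbol\mu_h)=(\boldsymbol u_h,\boldsymbol\lambda_h)$ and the divergence surjection, which avoids the discrete Korn and inf--sup machinery.
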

\begin{proof}
We first prove uniqueness. Let $\boldsymbol f=0$. By \eqref{eq:BrinkmanHybridized2},
$\div\boldsymbol u_h=0$. Taking
$\boldsymbol v_h=\boldsymbol u_h$ and
$\boldsymbol\mu_h=\boldsymbol\lambda_h$ in \eqref{eq:BrinkmanHybridized1} gives
$\dev\boldsymbol{\varepsilon}_w(\boldsymbol u_h,\boldsymbol\lambda_h)=0$ and $\boldsymbol u_h=0$.
The discrete divergence surjection then gives $p_h=0$. Hence
$\dev\boldsymbol{\varepsilon}_w(0,\boldsymbol\lambda_h)=0$, and therefore,
\[
\sum_{F\in\mathcal F(T)}
(\boldsymbol\lambda_h,
 \Pi_F\boldsymbol\tau\boldsymbol n_{\partial T})_F=0
\qquad
\forall\,\boldsymbol\tau\in
\Sigma_k^+(T;\mathbb S\cap\mathbb T),\; T\in\mathcal T_h.
\]
By local unisolvence, choose $\boldsymbol\tau\in\Sigma_k^+(T;\mathbb S\cap\mathbb T)$ with
\[
(\Pi_F\boldsymbol\tau\boldsymbol n_{\partial T},\boldsymbol q)_F
=
(\boldsymbol\lambda_h,\boldsymbol q)_F
\qquad
\forall\,\boldsymbol q\in\mathcal R_k^t(F),\;F\in\mathcal F(T),
\]
and with all cell DoFs equal to zero.
Taking \(\boldsymbol q=\boldsymbol\lambda_h|_F\) gives
\(\sum_{F\in\mathcal F(T)}\|\boldsymbol\lambda_h\|_{0,F}^2=0\);
hence \(\boldsymbol\lambda_h=0\). Thus the homogeneous problem
has only the zero solution, and finite dimensionality gives well-posedness.

For equivalence, set $\boldsymbol\sigma_h:=\nu\dev\boldsymbol{\varepsilon}_w(\boldsymbol u_h,\boldsymbol\lambda_h)$.
Testing \eqref{eq:BrinkmanHybridized1} with
	\(\boldsymbol v_h=\boldsymbol0\) and arbitrary
	\(\boldsymbol\mu_h\in\Lambda_h^t\), and noting that
	\([\![\Pi_F\boldsymbol\sigma_h\boldsymbol n_F]\!]
	\in\mathcal R_k^t(F)\), shows that the tangential--normal jumps of
	\(\boldsymbol\sigma_h\) vanish. Hence
	\(\boldsymbol\sigma_h\in\Sigma_h^{tn}\).
	Equation~\eqref{eq:BrinkmanWeakStrainRelation} and
	\eqref{eq:BrinkmanHybridized2} then give
	\eqref{eq:BrinkmanDiscrete1}, while taking
	\(\boldsymbol\mu_h=\boldsymbol0\) in \eqref{eq:BrinkmanHybridized1} gives
	\eqref{eq:BrinkmanDiscrete2}.
\end{proof}

\subsection{Stabilization-free virtual element method}
Adapting the local Stokes--Neumann virtual element construction of \cite{WeiHuangLi2021}, we introduce a stabilization-free virtual element realization of \eqref{eq:BrinkmanHybridized}. For
\(T\in\mathcal T_h\), define
\[
\begin{aligned}
	\mathbb V_{k+1}^{\mathrm{VE}}(T)
	:=\{\boldsymbol v\in H^1(T;\mathbb R^d):\;&
	\div\boldsymbol v\in\mathbb P_k(T),\ \text{there exists }s\in L^2(T)
	\text{ such that}\\
	&\div\boldsymbol{\varepsilon}(\boldsymbol v)+\grad s
	\in\mathbb P_{k-1}(T;\mathbb K)\boldsymbol x,\\
	&(\boldsymbol{\varepsilon}(\boldsymbol v)+s\boldsymbol I)\boldsymbol n_F
	\in\mathcal R_k^t(F)\oplus
	\mathbb P_{k+1}(F)\boldsymbol n_F
	\;\forall\,F\in\mathcal F(T)\}.
\end{aligned}
\]
The traction condition is understood in the weak normal-trace sense.
The DoFs are
\begin{subequations}\label{eq:BrinkmanVEDofs}
	\begin{align}
		(\Pi_F\boldsymbol v,\boldsymbol q)_F,
		&\qquad
		\boldsymbol q\in\mathcal R_k^t(F),\quad F\in\mathcal F(T),
		\label{eq:BrinkmanVEDofsTangential}\\
		(\boldsymbol v\cdot\boldsymbol n_F,q)_F,
		&\qquad
		q\in\mathbb P_{k+1}(F),\quad F\in\mathcal F(T),
		\label{eq:BrinkmanVEDofsNormal}\\
		(\boldsymbol v,\boldsymbol q)_T,
		&\qquad
		\boldsymbol q\in
		\grad\mathbb P_k(T)\oplus
		\mathbb P_{k-1}(T;\mathbb K)\boldsymbol x.
		\label{eq:BrinkmanVEDofsCell}
	\end{align}
\end{subequations}
Using the degree-\(k\) Koszul decomposition and noting that the traction
space contains the traces of \({\rm RM}(T)\), the local Stokes--Neumann
dimension and unisolvence argument of
\cite[Section~3.1]{WeiHuangLi2021} applies to the present data spaces.
Hence the DoFs \eqref{eq:BrinkmanVEDofs} are unisolvent for
\(\mathbb V_{k+1}^{\mathrm{VE}}(T)\). The same Koszul decomposition gives
\(\mathbb P_{k+1}(T;\mathbb R^d)
\subseteq\mathbb V_{k+1}^{\mathrm{VE}}(T)\), with \(s=0\) when \(k=0\).




For
\(\boldsymbol v\in\mathbb V_{k+1}^{\mathrm{VE}}(T)\), the DoFs
\eqref{eq:BrinkmanVEDofsNormal} and
\eqref{eq:BrinkmanVEDofsCell} determine the BDM reconstruction
\(I_{k+1,T}^{\div}\boldsymbol v\). Since
\(\div\boldsymbol v\in\mathbb P_k(T)\), the commuting property gives
\(\div I_{k+1,T}^{\div}\boldsymbol v=\div\boldsymbol v\). Moreover,
\begin{equation}\label{eq:BrinkmanVEBDMMoments}
	(I_{k+1,T}^{\div}\boldsymbol v-\boldsymbol v,\boldsymbol q)_T=0
	\qquad
	\forall\,\boldsymbol q\in\mathbb P_k(T;\mathbb R^d).
\end{equation}
Indeed, \eqref{eq:BrinkmanVEDofsCell} gives the identity on
\(\mathbb P_{k-1}(T;\mathbb K)\boldsymbol x\), while integration by parts,
the divergence identity, and \eqref{eq:BrinkmanVEDofsNormal} give it on
\(\grad\mathbb P_{k+1}(T)\). The degree-\(k\) Koszul decomposition then
proves \eqref{eq:BrinkmanVEBDMMoments}.

\begin{lemma}\label{lem:BrinkmanVEProjection}
	For every
	\(\boldsymbol v\in\mathbb V_{k+1}^{\mathrm{VE}}(T)\),
	the projection
	\(Q_T^\Sigma(\dev\grad\boldsymbol v)\) is computable from
	\eqref{eq:BrinkmanVEDofs}. Moreover,
	\begin{equation}\label{eq:BrinkmanVEPolynomialConsistency}
		Q_T^\Sigma(\dev\grad\boldsymbol p)
		=
		\dev\boldsymbol{\varepsilon}(\boldsymbol p)
		\qquad
		\forall\,\boldsymbol p\in
		\mathbb P_{k+1}(T;\mathbb R^d).
	\end{equation}
\end{lemma}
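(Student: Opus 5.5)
Since \(Q_T^\Sigma(\dev\grad\boldsymbol v)\) is defined by testing against \(\Sigma_k^+(T;\mathbb S\cap\mathbb T)\), it suffices to show that \((\dev\grad\boldsymbol v,\boldsymbol\tau_h)_T\) is computable from the DoFs \eqref{eq:BrinkmanVEDofs} for every \(\boldsymbol\tau_h\in\Sigma_k^+(T;\mathbb S\cap\mathbb T)\). The Gram matrix of a basis of this polynomial space is then known, and the projection follows by solving a small linear system.

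For \(\boldsymbol\tau_h\in\Sigma_k^+(T;\mathbb S\cap\mathbb T)\subset\mathbb P_{k+1}(T;\mathbb T)\), integrate by parts:
\[
(\dev\grad\boldsymbol v,\boldsymbol\tau_h)_T=(\grad\boldsymbol v,\boldsymbol\tau_h)_T
=-(\boldsymbol v,\div\boldsymbol\tau_h)_T+(\boldsymbol v,\boldsymbol\tau_h\boldsymbol n)_{\partial T}.
\]
The first equality uses pointwise tracelessness of \(\boldsymbol\tau_h\). We then treat the three pieces separately.
\begin{enumerate}
\item The cell term has \(\div\boldsymbol\tau_h\in\mathbb P_k(T;\mathbb R^d)\). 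By \eqref{eq:BrinkmanVEBDMMoments} it equals \(-(I_{k+1,T}^{\div}\boldsymbol v,\div\boldsymbol\tau_h)_T\), and this is computable because the BDM reconstruction is determined by \eqref{eq:BrinkmanVEDofsNormal}--\eqref{eq:BrinkmanVEDofsCell}.
\item For the boundary term, on each facet \(F\) split \(\boldsymbol v=(\boldsymbol v\cdot\boldsymbol n)\boldsymbol n+\Pi_F\boldsymbol v\). The normal part pairs \(\boldsymbol v\cdot\boldsymbol n\) with \(\boldsymbol n^{\intercal}\boldsymbol\tau_h\boldsymbol n\in\mathbb P_{k+1}(F)\), which is exactly a DoF in \eqref{eq:BrinkmanVEDofsNormal}.
\item The tangential part pairs \(\Pi_F\boldsymbol v\) with \(\Pi_F\boldsymbol\tau_h\boldsymbol n\). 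By the local constructions of Section~\ref{sec:weaklysymmtrictracelesstnelements}, this trace belongs to \(\mathcal R_k^t(F)\): for \(k\ge1\) it lies in \(\mathbb P_k(F;\mathbb R^{d-1})\), since the bubble part of \(\Sigma_k^+\) has zero tangential--normal trace, and for \(k=0\) it lies in \({\rm RM}(F)\) by the enrichment trace identity. Hence the tangential part is given by \eqref{eq:BrinkmanVEDofsTangential}.
\end{enumerate}
Together these show computability.

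For \eqref{eq:BrinkmanVEPolynomialConsistency}, let \(\boldsymbol p\in\mathbb P_{k+1}(T;\mathbb R^d)\) and write \(\dev\grad\boldsymbol p=\dev\boldsymbol{\varepsilon}(\boldsymbol p)+\skw\grad\boldsymbol p\), using that skew matrices are traceless. We have \(\dev\boldsymbol{\varepsilon}(\boldsymbol p)\in\mathbb P_k(T;\mathbb S\cap\mathbb T)\subset\Sigma_k^+(T;\mathbb S\cap\mathbb T)\) by \eqref{eq:Qkt2ST}, so the projection reproduces it. Moreover \(\skw\grad\boldsymbol p\in\mathbb P_k(T;\mathbb K)\) is \(L^2(T)\)-orthogonal to \(\Sigma_k^+(T;\mathbb S\cap\mathbb T)\) by the weak-symmetry constraint in \eqref{eq:Sigma+TS} and \eqref{eq:Sigma+TS0}, so its projection vanishes. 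Together these give \eqref{eq:BrinkmanVEPolynomialConsistency}.

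The main obstacle is the tangential boundary term in the lowest-order case \(d\ge3\). There one must use that the tangential--normal trace of the enriched modes \(\boldsymbol E_{\ell,\boldsymbol A}\) lies exactly in \(\mathbb K_F\boldsymbol x_F\), which is where the identity \(\tr_{F_m}^{tn}\boldsymbol E_{\ell,\boldsymbol A}=\delta_{m\ell}h_\ell\boldsymbol A\boldsymbol x_{F_\ell}\) is used. For general \(k\) the remaining point is that \(\boldsymbol v\in H^1(T)\) has well-defined traces, which justifies the integration by parts.
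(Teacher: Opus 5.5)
Your proposal is correct and follows essentially the same route as the paper: integration by parts using pointwise tracelessness, with the cell term handled by \eqref{eq:BrinkmanVEBDMMoments}, the normal and tangential facet terms matched to \eqref{eq:BrinkmanVEDofsNormal} and \eqref{eq:BrinkmanVEDofsTangential}, and consistency obtained from \(\dev\grad\boldsymbol p=\dev\boldsymbol{\varepsilon}(\boldsymbol p)+\skw\grad\boldsymbol p\) together with weak symmetry. You also make explicit two points the paper's proof leaves implicit, namely \(\Pi_F\boldsymbol\tau_h\boldsymbol n\in\mathcal R_k^t(F)\) (including the \(k=0\) enrichment case) and \(\dev\boldsymbol{\varepsilon}(\boldsymbol p)\in\Sigma_k^+(T;\mathbb S\cap\mathbb T)\) via \eqref{eq:Qkt2ST}.
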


\begin{proof}
	For
	\(\boldsymbol\tau_h\in
	\Sigma_k^+(T;\mathbb S\cap\mathbb T)\),
	pointwise tracelessness and integration by parts give
	\begin{equation}\label{eq:BrinkmanVEProjectionFormula}
		\begin{aligned}
			(Q_T^\Sigma(\dev\grad\boldsymbol v),
			\boldsymbol\tau_h)_T
			={}&
			-(\boldsymbol v,\div\boldsymbol\tau_h)_T
			+\sum_{F\in\mathcal F(T)}
			(\Pi_F\boldsymbol v,
			\Pi_F\boldsymbol\tau_h\boldsymbol n_{\partial T})_F\\
			&+
			\sum_{F\in\mathcal F(T)}
			(\boldsymbol v\cdot\boldsymbol n_{\partial T},
			\boldsymbol n_{\partial T}^{\intercal}
			\boldsymbol\tau_h\boldsymbol n_{\partial T})_F .
		\end{aligned}
	\end{equation}
	The facet terms are given by
	\eqref{eq:BrinkmanVEDofsTangential} and
	\eqref{eq:BrinkmanVEDofsNormal}, while the volume term is computable from
	\eqref{eq:BrinkmanVEBDMMoments} since
	\(\div\boldsymbol\tau_h\in\mathbb P_k(T;\mathbb R^d)\).
	
	Since
	\(\dev\grad\boldsymbol p-\dev\boldsymbol{\varepsilon}(\boldsymbol p)
	=\skw\grad\boldsymbol p\in\mathbb P_k(T;\mathbb K)\), weak symmetry
	gives \eqref{eq:BrinkmanVEPolynomialConsistency}.
\end{proof}

Define the global virtual element space by
\[
\begin{aligned}
	\mathring{\mathbb V}_h^{\mathrm{VE}}
	:=\{\boldsymbol v_h\in L^2(\Omega;\mathbb R^d):
	&\,\boldsymbol v_h|_T\in\mathbb V_{k+1}^{\mathrm{VE}}(T)
	\quad\forall\,T\in\mathcal T_h; \;
	\text{DoFs \eqref{eq:BrinkmanVEDofsTangential}--%
		\eqref{eq:BrinkmanVEDofsNormal}}\\
	&\,\text{are single-valued across }\mathring{\mathcal F}_h
	\text{ and vanish on }\partial\Omega\}.
\end{aligned}
\]


The stabilization-free virtual element method seeks
\((\boldsymbol u_h,p_h)\in
\mathring{\mathbb V}_h^{\mathrm{VE}}
\times\mathring{\mathcal Q}_h\) such that
\begin{subequations}\label{eq:BrinkmanVE}
	\begin{align}
		\nu a_h^{\mathrm{VE}}(\boldsymbol u_h,\boldsymbol v_h)
		+(I_{k+1,h}^{\div}\boldsymbol u_h,
		I_{k+1,h}^{\div}\boldsymbol v_h)
		+(\div_h\boldsymbol v_h,p_h)
		&=
		(\boldsymbol f,I_{k+1,h}^{\div}\boldsymbol v_h),
		\label{eq:BrinkmanVE1}\\
		(\div_h\boldsymbol u_h,q_h)
		&=0,
		\label{eq:BrinkmanVE2}
	\end{align}
\end{subequations}
for all
\((\boldsymbol v_h,q_h)\in
\mathring{\mathbb V}_h^{\mathrm{VE}}
\times\mathring{\mathcal Q}_h\), where
\[
a_h^{\mathrm{VE}}(\boldsymbol w_h,\boldsymbol v_h)
:=
\bigl(
Q_h^\Sigma(\dev\grad_h\boldsymbol w_h),
Q_h^\Sigma(\dev\grad_h\boldsymbol v_h)
\bigr).
\]


\begin{theorem}\label{thm:BrinkmanVE}
	Method \eqref{eq:BrinkmanVE} is well posed and algebraically equivalent
	to \eqref{eq:BrinkmanHybridized}. If
	\((\boldsymbol u_h,p_h)\) solves \eqref{eq:BrinkmanVE}, then
	\(\boldsymbol\sigma_h
	:=\nu Q_h^\Sigma(\dev\grad_h\boldsymbol u_h)\), together with
	\((I_{k+1,h}^{\div}\boldsymbol u_h,p_h)\), solves
	\eqref{eq:BrinkmanDiscrete}.
\end{theorem}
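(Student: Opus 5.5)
The plan is to construct an explicit isomorphism between the virtual element unknowns and the hybridized unknowns, and then transport well-posedness through it. Define
\[
J:\mathring{\mathbb V}_h^{\mathrm{VE}}\to\mathring{\mathbb V}_h^{\div}\times\Lambda_h^t,\qquad
J\boldsymbol v_h:=\bigl(I_{k+1,h}^{\div}\boldsymbol v_h,\ \boldsymbol\lambda_h\bigr),
\]
where \(\boldsymbol\lambda_h|_F:=Q_F^t(\Pi_F\boldsymbol v_h)\) on interior facets. This choice is well defined because the DoFs \eqref{eq:BrinkmanVEDofsTangential} are single-valued. On boundary facets \(\boldsymbol\lambda_h\) vanishes because those DoFs vanish there.

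\textbf{Step 1: \(J\) is a bijection.} The normal and cell DoFs \eqref{eq:BrinkmanVEDofsNormal}--\eqref{eq:BrinkmanVEDofsCell} coincide with the BDM DoFs \eqref{eq:HdivDoF}. Single-valuedness and the boundary conditions then give \(I_{k+1,h}^{\div}\boldsymbol v_h\in\mathring{\mathbb V}_h^{\div}\). Conversely, any pair \((\boldsymbol w_h,\boldsymbol\lambda_h)\) prescribes all DoFs \eqref{eq:BrinkmanVEDofs}: the normal and cell DoFs come from \(\boldsymbol w_h\), and the tangential DoFs come from \(\boldsymbol\lambda_h\) (zero on \(\partial\Omega\)). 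By local unisolvence this determines a unique \(\boldsymbol v_h\in\mathring{\mathbb V}_h^{\mathrm{VE}}\) with \(J\boldsymbol v_h=(\boldsymbol w_h,\boldsymbol\lambda_h)\). Hence \(J\) is a linear isomorphism.

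\textbf{Step 2: the identity \(Q_h^\Sigma(\dev\grad_h\boldsymbol v_h)=\dev\boldsymbol\varepsilon_w(J\boldsymbol v_h)\).} This is the key step. Fix \(\boldsymbol\tau_h\in\Sigma_h^{-1}\) and start from \eqref{eq:BrinkmanVEProjectionFormula}. Since \(\div\boldsymbol\tau_h\in\mathbb P_k(T;\mathbb R^d)\), the moments \eqref{eq:BrinkmanVEBDMMoments} let us replace \(\boldsymbol v\) by \(I_{k+1,T}^{\div}\boldsymbol v\) in the volume term.

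In the normal facet term, \(\boldsymbol n^{\intercal}\boldsymbol\tau_h\boldsymbol n\in\mathbb P_{k+1}(F)\). This holds because \(\Sigma_k^+\subset\mathbb P_{k+1}\), and it is exactly what \eqref{eq:BrinkmanVEDofsNormal} requires. So \(\boldsymbol v\cdot\boldsymbol n\) may be replaced by \(I^{\div}\boldsymbol v\cdot\boldsymbol n\).

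In the tangential term, \(\Pi_F\boldsymbol\tau_h\boldsymbol n_{\partial T}\in\mathcal R_k^t(F)\). So \(\Pi_F\boldsymbol v\) may be replaced by \(Q_F^t\Pi_F\boldsymbol v=\boldsymbol\lambda_h\) on interior facets. On boundary facets both quantities vanish.

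Comparing the result with the definition of \(\dev\boldsymbol\varepsilon_w\) gives the identity.

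\textbf{Step 3: transfer the equations.} By Step 2,
\[
\nu a_h^{\mathrm{VE}}(\boldsymbol u_h,\boldsymbol v_h)=\nu(\dev\boldsymbol\varepsilon_w(J\boldsymbol u_h),\dev\boldsymbol\varepsilon_w(J\boldsymbol v_h)).
\]
Since \(\div\boldsymbol v_h\in\mathbb P_k(T)\), the commuting property gives \(\div_h\boldsymbol v_h=\div I_{k+1,h}^{\div}\boldsymbol v_h\). The mass and load terms already use \(I_{k+1,h}^{\div}\). Therefore \eqref{eq:BrinkmanVE} is exactly \eqref{eq:BrinkmanHybridized} written in the variables \(J\boldsymbol u_h\), with test functions ranging over all of \(\mathring{\mathbb V}_h^{\div}\times\Lambda_h^t\) by surjectivity of \(J\).

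\textbf{Step 4: conclude.} Well-posedness and algebraic equivalence then follow from Theorem~\ref{thm:BrinkmanHybridized}. The same theorem shows that \(\boldsymbol\sigma_h=\nu\dev\boldsymbol\varepsilon_w(J\boldsymbol u_h)=\nu Q_h^\Sigma(\dev\grad_h\boldsymbol u_h)\), together with \((I_{k+1,h}^{\div}\boldsymbol u_h,p_h)\), solves \eqref{eq:BrinkmanDiscrete}.

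The main obstacle is the degree bookkeeping in Step 2. We must check that the normal-normal trace of \(\Sigma_k^+(T;\mathbb S\cap\mathbb T)\) lies in \(\mathbb P_{k+1}(F)\), including the \(k=0\) enrichment \(\boldsymbol E_{\ell,\boldsymbol A}\), which is linear. We must also check that \(\div\boldsymbol\tau_h\) lies in \(\mathbb P_k\). Both follow from \(\Sigma_k^+\subset\mathbb P_{k+1}(T;\mathbb T)\).
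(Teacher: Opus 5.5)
Your proposal is correct and takes essentially the same approach as the paper: identify $\boldsymbol v_h$ with $(I_{k+1,h}^{\div}\boldsymbol v_h, Q_F^t(\Pi_F\boldsymbol v_h))$ via unisolvence, obtain $Q_h^\Sigma(\dev\grad_h\boldsymbol v_h)=\dev\boldsymbol{\varepsilon}_w(\cdot,\cdot)$ from \eqref{eq:BrinkmanVEProjectionFormula} and \eqref{eq:BrinkmanVEBDMMoments}, use the commuting property, and conclude from Theorem~\ref{thm:BrinkmanHybridized}. Your version additionally spells out the facet replacements that the paper leaves implicit: normal moments against $\mathbb P_{k+1}(F)$, tangential moments against $\mathcal R_k^t(F)$, and the vanishing boundary-facet terms.
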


\begin{proof}
	By the unisolvence of \eqref{eq:HdivDoF} and
	\eqref{eq:BrinkmanVEDofs}, the DoFs bijectively identify
	\(\boldsymbol v_h\in\mathring{\mathbb V}_h^{\mathrm{VE}}\) with
	\(
	\bigl(
	I_{k+1,h}^{\div}\boldsymbol v_h,\,
	Q_{\mathcal F_h}^t(\Pi_F\boldsymbol v_h)
	\bigr)
	\in
	\mathring{\mathbb V}_h^{\div}\times\Lambda_h^t,
	\)
where $\bigl(Q_{\mathcal F_h}^t(\Pi_F\boldsymbol v_h)\bigr)|_F:=Q_F^t((\Pi_F\boldsymbol v_h))$ for $F\in\mathcal{F}_h$.
	Equations~\eqref{eq:BrinkmanVEProjectionFormula} and
	\eqref{eq:BrinkmanVEBDMMoments} give
	\[
	Q_h^\Sigma(\dev\grad_h\boldsymbol v_h)
	=
	\dev\boldsymbol{\varepsilon}_w\bigl(
	I_{k+1,h}^{\div}\boldsymbol v_h,\,
	Q_{\mathcal F_h}^t(\Pi_F\boldsymbol v_h)
	\bigr).
	\]
	Together with the commuting property, this shows that
	\eqref{eq:BrinkmanVE} coincides with
	\eqref{eq:BrinkmanHybridized} under the above correspondence.
	The result follows from Theorem~\ref{thm:BrinkmanHybridized}.
\end{proof}

\section{Numerical Examples}\label{sec:BrinkmanNumericalResults}

We test optimal convergence, Darcy-limit robustness, and obstacle through-flow with slip boundary conditions on uniform simplicial meshes using iFEM~\cite{chen2009ifem}.

\subsection{Error Estimates for a Smooth Exact Solution}

\begin{example}\label{exampleBrinkmanSmooth}
	\normalfont
	Let \(\Omega=(0,1)^3\),
	\(\psi_3=x^2(x-1)^2y^2(y-1)^2z^2(z-1)^2\), and take
	\[
	\boldsymbol u=\curl(\psi_3,\psi_3,\psi_3)^{\intercal},
	\qquad p=-x^5-y^5-z^5+\frac12.
	\]
	Then \(\boldsymbol\sigma=\nu\boldsymbol{\varepsilon}(\boldsymbol u)\),
	and \(\boldsymbol f\) is determined by \eqref{eq:Brinkman_intro}.
\end{example}

Define
\begin{equation}\label{eq:BrinkmanNumericalViscosityError}
	E_{\nu,h}
	:=
	\nu^{-1/2}\|\boldsymbol\sigma-\boldsymbol\sigma_h\|_{0,h}
	+\nu^{1/2}|\boldsymbol u-\boldsymbol u_h|_{1,h}
	+\|\boldsymbol u-\boldsymbol u_h\|_0+\|p-p_h\|_0.
\end{equation}
Table~\ref{tab:Brinkman-smooth-natural} reports \(E_{\nu,h}\) and its
observed order for \(k=0,1\) and four viscosities.

\begin{table}[htbp]
	\centering
	\caption{Numerical results for
		Example~\ref{exampleBrinkmanSmooth}.}
	\label{tab:Brinkman-smooth-natural}
	\footnotesize
	\setlength{\tabcolsep}{2pt}
	\renewcommand{\arraystretch}{0.92}
	\begin{tabular*}{\textwidth}{@{\extracolsep{\fill}}c c cccc cccc@{}}
		\toprule
		\multirow{2}{*}{\(k\)} &
		\multirow{2}{*}{\(h\)}
		& \multicolumn{2}{c}{\(\nu=1\)}
		& \multicolumn{2}{c}{\(\nu=10^{-2}\)}
		& \multicolumn{2}{c}{\(\nu=10^{-4}\)}
		& \multicolumn{2}{c}{\(\nu=10^{-6}\)} \\
		\cmidrule(lr){3-4}
		\cmidrule(lr){5-6}
		\cmidrule(lr){7-8}
		\cmidrule(lr){9-10}
		& & \(E_{\nu,h}\) & order
		& \(E_{\nu,h}\) & order
		& \(E_{\nu,h}\) & order
		& \(E_{\nu,h}\) & order \\
		\midrule
		\multirow{5}{*}{\(0\)}
		& \(2^{-1}\) & 3.033e-01 & -- & 2.957e-01 & -- & 2.949e-01 & -- & 2.948e-01 & -- \\
		& \(2^{-2}\) & 1.700e-01 & 0.84 & 1.655e-01 & 0.84 & 1.650e-01 & 0.84 & 1.649e-01 & 0.84 \\
		& \(2^{-3}\) & 8.773e-02 & 0.95 & 8.530e-02 & 0.96 & 8.505e-02 & 0.96 & 8.503e-02 & 0.96 \\
		& \(2^{-4}\) & 4.422e-02 & 0.99 & 4.298e-02 & 0.99 & 4.286e-02 & 0.99 & 4.284e-02 & 0.99 \\
		& \(2^{-5}\) & 2.215e-02 & 1.00 & 2.153e-02 & 1.00 & 2.147e-02 & 1.00 & 2.146e-02 & 1.00 \\
		\midrule
		\multirow{4}{*}{\(1\)}
		& \(2^{-1}\) & 7.965e-02 & -- & 7.754e-02 & -- & 7.706e-02 & -- & 7.689e-02 & -- \\
		& \(2^{-2}\) & 2.262e-02 & 1.82 & 2.136e-02 & 1.86 & 2.125e-02 & 1.86 & 2.122e-02 & 1.86 \\
		& \(2^{-3}\) & 5.849e-03 & 1.95 & 5.477e-03 & 1.96 & 5.441e-03 & 1.97 & 5.437e-03 & 1.96 \\
		& \(2^{-4}\) & 1.475e-03 & 1.99 & 1.378e-03 & 1.99 & 1.368e-03 & 1.99 & 1.368e-03 & 1.99 \\
		\bottomrule
	\end{tabular*}
\end{table}

Across both polynomial degrees and all tested viscosities, the weighted
errors converge with order \(k+1\), confirming
Corollary~\ref{cor:BrinkmanExactError}.

\subsection{Darcy Limit and Boundary Layer}

\begin{example}
	\normalfont
	Let \(\Omega=(0,1)^2\) and \(g(y)=y^2(1-y)^2\). We take \(k=1\) and
	\[
	\boldsymbol u^0=(-x(1-x)g',(1-2x)g)^{\intercal},
	\qquad p^0=x^2+y^2-\frac23,
	\qquad \boldsymbol f=\boldsymbol u^0-\nabla p^0.
	\]
	This Darcy pair satisfies \(\boldsymbol u^0\cdot\boldsymbol n=0\), while
	its tangential trace is nonzero on \(x=0,1\). We solve the discrete
	Brinkman problem with the same right-hand side for each \(\nu\).
\end{example}

For the quantities reported in
Figure~\ref{fig:Brinkman-boundary-layer-diagnostics}, define
\[
	\begin{aligned}
	E^0_{\nu,h}
	&:=\nu^{1/2}
	\bigl\|Q_h^\Sigma(\dev\grad\boldsymbol u^0)
	-\nu^{-1}\boldsymbol\sigma_h\bigr\|_0
	+\|\boldsymbol u^0-\boldsymbol u_h\|_0+\|p^0-p_h\|_0,\\
	R_{\nu,h}
	&:=\frac{E^0_{\nu,h}}
	{\nu^{1/4}\|\boldsymbol f\|_{H(\curl)}
		+h^2(|\boldsymbol u^0|_2+|p^0|_2)}.
\end{aligned}
\]
Figure~\ref{fig:Brinkman-boundary-layer-diagnostics}(a) shows that
\(R_{\nu,h}\) remains uniformly bounded over the tested \(h\) and \(\nu\),
confirming \eqref{eq:BrinkmanDarcyLimitError}. Panel~(b) illustrates the
increasing boundary-layer concentration of
\(|\nu^{-1}\boldsymbol\sigma_h|_{\rm F}\) near \(x=0,1\).

\begin{figure}[htbp]
	\centering
	\includegraphics[width=0.94\textwidth]
	{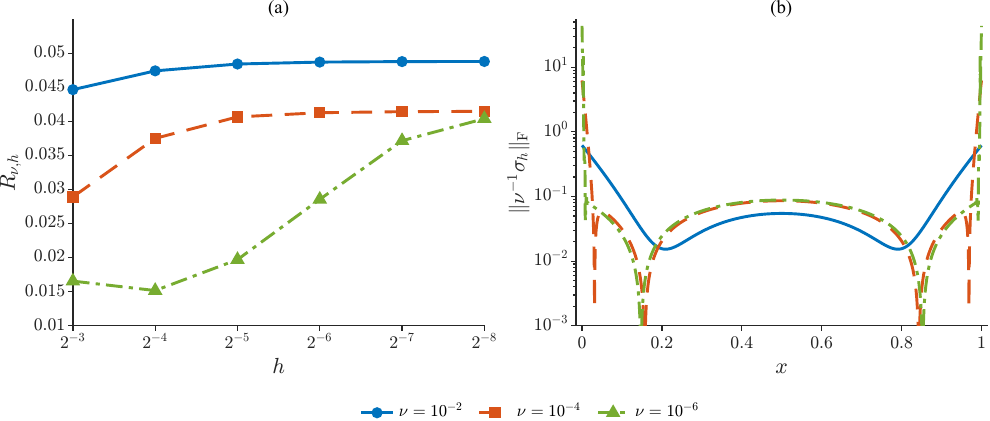}
	\caption{Parameter-uniform boundary-layer diagnostics for \(d=2\) and
		\(k=1\): (a) normalized errors \(R_{\nu,h}\); (b) averaged one-sided
		traces of the Frobenius norm \(|\nu^{-1}\boldsymbol\sigma_h|_{\rm F}\)
		along \(y=1/2\) on the finest mesh \(h=2^{-8}\).}
	\label{fig:Brinkman-boundary-layer-diagnostics}
\end{figure}

\subsection{Obstacle Through-Flow}

\begin{example}
	\normalfont
	We adopt the channel--cylinder geometry of the classical
	Navier--Stokes benchmark in \cite{SchaeferTurek1996}.
	Let \(H=0.41\), and let \(D_{\rm obs}\) be the disk centered at
	\((0.2,0.2)\) with radius \(0.05\). Define
	\(\Omega:=(0,2.2)\times(0,H)\setminus\overline{D_{\rm obs}}\).
	We take \(k=1\) and \(\boldsymbol f=\boldsymbol0\), and prescribe
	\[
	\boldsymbol u\cdot\boldsymbol n=
	\begin{cases}
		-4y(H-y)/H^2, & \text{on }x=0,\\
		\phantom{-}4y(H-y)/H^2, & \text{on }x=2.2.
	\end{cases}
	\]
	On the channel walls and the obstacle boundary, we set
	\(\boldsymbol u\cdot\boldsymbol n=0\), and the tangential traction vanishes
	on \(\partial\Omega\).
	
	We use uniform refinements of a fixed polygonal approximation
	\(\Omega_{\rm poly}\) of \(\Omega\).
	For \(k=1\), the prescribed normal trace is imposed exactly in
	an affine translate of the BDM trial space, while the homogeneous space in
	Remark~\ref{rem:slip_discrete} is used for testing. Let \(\Gamma_{\rm in}\),
	\(\Gamma_{\rm out}\), and \(\Gamma_{\rm obs}\) denote the inlet, outlet,
	and polygonal obstacle boundary of \(\Omega_{\rm poly}\), respectively. Set
	\[
	\begin{aligned}
		D^h&=-\int_{\Gamma_{\rm obs}}
		\bigl[(\boldsymbol\sigma_h+p_h\boldsymbol I)\boldsymbol n\bigr]_1\,\dd s,
		&\Delta p_h&=\frac{1}{|\Gamma_{\rm in}|}
		\int_{\Gamma_{\rm in}}p_h\,\dd s
		-\frac{1}{|\Gamma_{\rm out}|}
		\int_{\Gamma_{\rm out}}p_h\,\dd s.
	\end{aligned}
	\]
	The signs of \(D^h\) and \(\Delta p_h\) follow the fluid outward normal and
	the pressure convention in \eqref{eq:Brinkman_intro}, respectively.
\end{example}

Figure~\ref{fig:Brinkman-obstacle-throughflow} shows a localized velocity
disturbance near the obstacle and an essentially parallel downstream flow,
consistent with Brinkman screening~\cite{DurlofskyBrady1987}. The nonzero
wall-parallel velocity reflects the slip condition. Under the convention
in \eqref{eq:Brinkman_intro}, the pressure increases downstream, consistent
with \(\Delta p_h<0\).

\begin{figure}[htbp]
	\centering
	\includegraphics[width=\textwidth]
	{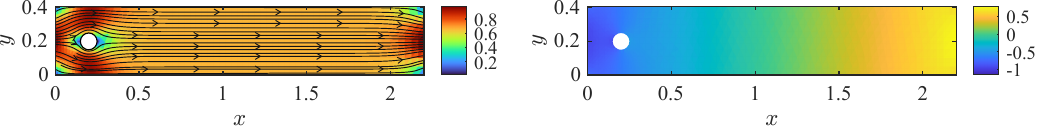}
	\caption{Velocity magnitude and streamlines (left) and pressure (right)
		for \(\nu=10^{-2}\), \(k=1\), and mesh level~4 on
		\(\Omega_{\rm poly}\).}
	\label{fig:Brinkman-obstacle-throughflow}
\end{figure}

\begin{table}[htbp]
	\centering
	\caption{Obstacle drag \(D^h\), pressure difference \(\Delta p_h\), and
		the divergence diagnostic for \(k=1\) under uniform refinement. Here
		\(R_{\div}^h:=\max_\nu\|\div\boldsymbol u_h\|_0\), where the maximum
		is taken over the four displayed viscosities.}
	\label{tab:Brinkman-obstacle-outputs}
	\scriptsize
	\setlength{\tabcolsep}{2pt}
	\renewcommand{\arraystretch}{1.2}
	\begin{tabular}{c c cc cc cc cc c}
		\toprule
		\multirow{2}{*}{level} & \multirow{2}{*}{DoFs}
		& \multicolumn{2}{c}{\(\nu=1\)}
		& \multicolumn{2}{c}{\(\nu=10^{-2}\)}
		& \multicolumn{2}{c}{\(\nu=10^{-4}\)}
		& \multicolumn{2}{c}{\(\nu=10^{-6}\)}
		& \multicolumn{1}{c}{divergence} \\
		\cmidrule(lr){3-4}
		\cmidrule(lr){5-6}
		\cmidrule(lr){7-8}
		\cmidrule(lr){9-10}
		\cmidrule(lr){11-11}
		& & \(D^h\) & \(\Delta p_h\) & \(D^h\) & \(\Delta p_h\)
		& \(D^h\) & \(\Delta p_h\) & \(D^h\) & \(\Delta p_h\)
		& \(R_{\div}^h\) \\
		\midrule
		1 & 1031 & 3.624 & -11.113 & 0.052 & -1.605 & 0.012 & -1.500 & 0.011 & -1.498 & 1.979e-14 \\
		2 & 4307 & 3.784 & -11.270 & 0.054 & -1.605 & 0.012 & -1.497 & 0.011 & -1.495 & 7.825e-14 \\
		3 & 17591 & 3.891 & -11.263 & 0.055 & -1.605 & 0.012 & -1.496 & 0.011 & -1.494 & 7.157e-14 \\
		4 & 71087 & 3.986 & -11.356 & 0.057 & -1.606 & 0.012 & -1.496 & 0.011 & -1.494 & 2.368e-13 \\
		\bottomrule
	\end{tabular}
\end{table}
The values of \(R_{\div}^h\) in
Table~\ref{tab:Brinkman-obstacle-outputs} remain at roundoff level,
numerically confirming exact discrete incompressibility. Since
\(\boldsymbol f=\boldsymbol0\), \(|\Delta p_h|\) is the pressure drop
required to sustain the prescribed through-flow against porous and viscous
resistance, while \(D^h\) measures the horizontal force exerted on the
obstacle. At \(\nu=1\), viscous effects are substantial, consistent with the larger values of both \(D^h\) and \(|\Delta p_h|\). As \(\nu\) decreases, both quantities
approach limiting values characteristic of the Darcy-dominated regime
\cite{Brinkman1949,NieldBejan2017}. Under refinement, \(\Delta p_h\)
stabilizes more rapidly than \(D^h\). This difference reflects the fact
that \(\Delta p_h\) is obtained from boundary-averaged pressures, whereas
\(D^h\) is a traction functional on the obstacle boundary and is therefore
more sensitive to the local mesh resolution near the obstacle.

\bibliographystyle{siamplain}
\bibliography{references}

\end{document}